\documentclass[a4paper,english,10pt,oneside]{article}

\usepackage[english]{babel}
\usepackage{amsmath}
\usepackage{amssymb}
\usepackage{wasysym}
\usepackage{amsthm}  
\usepackage{tikz}
\usepackage{pgfplots}

\usepackage{graphicx}
\usepackage{mathtools}
 \usepackage[margin=1in]{geometry}
\usepackage{booktabs}
\usepackage{xspace}
\usepackage[hidelinks]{hyperref}
\usepackage[font=small]{caption}

\usepackage{subcaption}
\usepackage{todonotes}

\usepackage{thmtools}
\usepackage{thm-restate}

\usepackage{chngcntr}
\usepackage{apptools}
\AtAppendix{\numberwithin{lemma}{section}}
\AtAppendix{\numberwithin{proposition}{section}}

\usepackage{cleveref}
\usepackage[percent]{overpic}

\setlength{\parindent}{0em}
\setlength{\parskip}{1em}

\graphicspath{{pictures/}}

\definecolor{darkblue}{rgb}{0,0,.5}
\DeclareMathOperator{\conv}{conv} 
\DeclareMathOperator{\clconv}{\overline{conv}} 
\DeclareMathOperator{\inte}{int} 



\newcommand{\st}{\,:\,}






\newcommand{\genericquadopen}{S}
\newcommand{\genericquadclosed}{T}
\newcommand{\PDLC}{PDLC}


\usepackage{xifthen}
\ifthenelse{\isundefined{\T}}{%
  \newcommand{\T}{\mathsf{T}}
  }{%
  \renewcommand{\T}{\mathsf{T}}
}

\newtheorem{theorem}{Theorem}
\newtheorem{myproposition}{Proposition}

\newtheorem{mylemma}{Lemma}
\newtheorem{example}{Example}

\newtheorem{definition}{Definition}
\newtheorem{claim}{Claim}



%
%
%
%

\author{Santanu S. Dey\footnote{Georgia Institute of Technology, Atlanta, GA, USA, santanu.dey@isye.gatech.edu}  \and Gonzalo Mu\~noz\footnote{Universidad de O'Higgins, Rancagua, Chile, gonzalo.munoz@uoh.cl} \and Felipe Serrano\footnote{$\text{I}^2$DAMO GmbH, Engleralle 19, 14169 Berlin, Germany, serrano@i2damo.de }}


\title{On obtaining the convex hull of quadratic inequalities via aggregations%
\footnote{Gonzalo Mu\~noz would like to thank the support of the Research and Development Agency of Chile (ANID) through Fondecyt grant number 11190515. Santanu S. Dey would like to gratefully acknowledge the support of the grant N000141912323 from ONR.}}


\begin{document}
\maketitle
\begin{abstract}
A classical approach for obtaining valid inequalities for a set involves weighted aggregations of the inequalities that describe such set. When the set is described by linear inequalities, thanks to the Farkas lemma, we know that every valid inequality can be obtained using aggregations. When the inequalities describing the set are two quadratics, Yildiran \cite{yildiran2009convex} showed that the convex hull of the set is given by at most two aggregated inequalities. In this work, we study the case of a set described by three or more quadratic inequalities. We show that, under technical assumptions, the convex hull of a set described by three quadratic inequalities can be obtained via (potentially infinitely many) aggregated inequalities. We also show, through counterexamples, that it is unlikely to have a similar result if either the technical conditions are relaxed, or if we consider four or more inequalities.
\end{abstract}



\section{Introduction}\label{sec:intro}

Given a feasible region described by two or more constraints, a common approach for obtaining relaxations of the set is to consider weighted aggregations,  that is, the process of obtaining a new inequality by re-scaling the constraints by scalar weights and then adding the scaled constraints together. We call this approach \emph{aggregation}.
In the case of a nonempty set described by a finite number of linear inequalities we know, thanks to the Farkas Lemma, that any implied inequality can be obtained via an aggregation. Aggregations have also been studied in the context of integer programming (for example~\cite{bodur2018aggregation}) to obtain cutting-planes and in mixed-integer nonlinear programming (for example~\cite{muller2020generalized}) to obtain better dual bounds. 

In this paper, we are interested in understanding the strength of aggregations in order to obtain the convex hull of sets defined by \emph{quadratic constraints}. 
In ~\cite{yildiran2009convex,Burer2016,Modaresi2017}, the authors have shown that the convex hull of \emph{two} quadratic inequality constraints can be obtained as the intersection of a finite number of aggregated constraints (in fact, two). Each of these aggregated constraints may not be convex on its own, but their intersection gives the convex hull. The main question we ask in this paper is whether such an aggregation technique can be shown to deliver the convex hull of sets defined using more than two quadratic constraints.

Our key result is to show, that under a nontrivial technical condition, a similar result to the case of sets described by two quadratic constraints can be obtained for the case of three quadratic constraints ---that is, the convex hull of a set described by three quadratic constraints can be obtained as the intersection of aggregated constraints where, individually, these aggregated constraints may not be convex.
Overall, we follow closely the presentation style in Yildiran~\cite{yildiran2009convex} and follow a similar high-level proof strategy. 

The key to our approach involves proving a three-quadratic-constraints S-lemma~\cite{yakubovich1977,polik2007survey}. An S-lemma for three quadratics is known, however, for our purposes, we require a different version of it which, to the best of our knowledge, has not been proved. Our S-lemma result is based on a theorem due to Barvinok~\cite{barvinok2001remark}.

We also show, via examples, that this result provides a reasonable demarcation of conditions that allow obtaining the convex hull of quadratic constraints via aggregated constraints. In particular, we present an example with three quadratic constraints where the necessary technical condition for our result does not hold, and the convex hull is not obtained using aggregated constraints. We also present an example with four quadratic constraints where the convex hull is not obtained using aggregated constraints even though the necessary technical conditions hold. 

\subsection{Literature review}
The results presented in this paper contribute to the current literature on understanding the structure of the convex hull of simple sets described by quadratic constraints. Recently, the convex hull of one quadratic constraint intersected with a polytope, and other cutting-plane generation techniques for this set have been studied in~\cite{tawarmalani2010strong, DeySantana2018,asterquad,bienstock2020outer,munoz2020maximal,GDR2021}.
The convex hull for two quadratic constraints and related sets have been studied in~\cite{yildiran2009convex,Burer2016,Modaresi2017,dey2019convexifications}. 

Other connections to previous literature are convex hull results for sets related to the so-called extended trust-region problem
\cite{ye2003new,burer2013second,burer2015trust,burer2015gentle,anstreicher2017kronecker,belotti2013families}. 
Another connection is given by the general conditions for the SDP relaxation being tight/giving the convex hull, which have been studied in~\cite{wang2021tightness,wang2020convex,burer2019exact,argue2020necessary}. It is important to mention that the results in \cite{wang2021tightness}, providing conditions for the tightness of the SDP relaxation, involve the study of the aggregation of quadratic inequalities, as in our case. However, the results are of a slightly different nature; for instance, \Cref{example:1-2} below illustrates a case where the convex hull can be obtained via aggregations, but the SDP relaxation is not tight. 

\subsection{Notation}
For a positive integer $n$, we denote the set $\{1, \dots, n\}$ by $[n]$.  Let $\mathbb{S}^n$ denote the space of $n\times n$ symmetric matrices, and $\mathbb{S}^n_+$ denote the space of positive semi-definite matrices. We denote the fact that $A \in \mathbb{S}^n_{+}$  as $A \succeq 0$, and the fact that $A \in \mathbb{S}^n$  is a positive-definite matrix as $A \succ 0$. Given a set $S$, we denote its convex hull, interior, and closure as $\textup{conv}(S)$, $\inte(S)$, and $\bar{S}$ respectively. 

Given a set $S$ defined by one quadratic constraint, that is, $$S:= \{x \in \mathbb{R}^n: x^{\top}Ax + 2b^{\top}x + c \ \spadesuit \ 0\},$$ where $\spadesuit$ is either $<$ or $\leq$, we let $\nu(S)$ denote the number of negative eigenvalues of $A$.  Given a set $S$ described by $m$ quadratic constraints:
$$S:= \{x \in \mathbb{R}^n: x^{\top}A_ix + 2b_i^{\top}x + c_i \ \spadesuit \ 0, \ i \in [m]\},$$ where $\spadesuit$ is either $<$ or $\leq$ for all the constraints, we  denote the homogenization of this set as $S^h$, that is
$$S^h := \left\{(x, x_{n +1}) \in \mathbb{R}^n\times \mathbb{R}: x^{\top}A_ix + 2x^{\top}b_i x_{n +1} + c_i x_{n+1}^2 \ \spadesuit \ 0, \ i \in [m]\right\}.$$
Given $\lambda \in \mathbb{R}^{m}_{+}$, we let $S_{\lambda}$ to denote the set defined by aggregation of the constraints in $S$ with the weights $\lambda$, that is
$$S_{\lambda} := \left\{x \in \mathbb{R}^n : x^{\top}\left(\sum_{i \in [m]} \lambda_iA_i \right)x + 2x^{\top}\left(\sum_{i \in [m]}\lambda_ib_i\right)  + \sum_{i \in [m]}\lambda_i c_i\ \spadesuit \ 0 \right\}.$$

\subsection{Outline of the paper} In \Cref{sec:main} we present our main results, including examples of the results and counterexamples illustrating the importance of the technical conditions in our results. 
%
%
In \Cref{sec:conclusion} we provide open questions and conclusions from our main results.
\Cref{sec:proofslemma,sec:proofthmopencase,sec:proofclosedcase,sec:counterexproofs} present the proofs of each of our results. In each one of these sections we provide the preliminary results needed for each proof. 

\section{Main results}\label{sec:main}

In this section, we provide our main results along with the necessary background. We also provide examples illustrating the main results and counter-examples showing the importance of our conditions. All proofs are presented in \Cref{sec:proofslemma} and onwards.

\subsection{Known S-lemmas and a new variant}\label{sec:prelimslem}

At the core of the main results of our work is the S-lemma, which has a rich history. In its most modern form, it was first proven by Yakubovich~\cite{yakubovich1977}. See the excellent survey by P\'olik and Terlaky~\cite{polik2007survey}. 

The following version of S-lemma was used by Yildiran~\cite{yildiran2009convex} in his proof of the convex hull result for two quadratics. 
\begin{theorem}[S-lemma for two quadratics, used in \cite{yildiran2009convex}]\label{thm:Slemma2}
Let $g_1,g_2:\mathbb{R}^n \rightarrow \mathbb{R}$ be homogeneous quadratic functions:
\[g_i(x) = x^\top Q_i x.\]
Then,
\[\{x\in\mathbb{R}^n \,|\, g_i(x) < 0, i \in [2] \} = \emptyset \Longleftrightarrow \exists \lambda \in \mathbb{R}^2_+ \setminus \{0\},\, \sum_{i=1}^2 \lambda_i Q_i \succeq 0.\]
\end{theorem}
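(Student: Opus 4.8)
The plan is to prove both directions. The direction $(\Leftarrow)$ is the easy one: if there exists $\lambda \in \mathbb{R}^2_+ \setminus \{0\}$ with $Q := \lambda_1 Q_1 + \lambda_2 Q_2 \succeq 0$, then for any $x$ we have $\lambda_1 g_1(x) + \lambda_2 g_2(x) = x^\top Q x \geq 0$, so it is impossible that both $g_1(x) < 0$ and $g_2(x) < 0$ (since $\lambda \neq 0$ and $\lambda \geq 0$, at least one strict inequality would force the convex combination to be negative). Hence the strict system is infeasible.

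The substance is in $(\Rightarrow)$. I would argue the contrapositive in the following equivalent form: if no nonnegative nonzero combination of $Q_1, Q_2$ is PSD, then there is a point $x$ with $g_1(x) < 0$ and $g_2(x) < 0$. The classical route is through the convexity of the joint numerical range. Consider the set $W := \{(g_1(x), g_2(x)) : x \in \mathbb{R}^n\} \subseteq \mathbb{R}^2$. The key geometric fact (Dines' theorem) is that for two homogeneous quadratics, $W$ is a convex cone in $\mathbb{R}^2$. Given this, the strict system $g_1(x)<0, g_2(x)<0$ is infeasible if and only if $W$ does not meet the open third quadrant $\mathbb{R}^2_{<0}$. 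Since $W$ and $\mathbb{R}^2_{<0}$ are disjoint convex sets (one of them open), a separating hyperplane argument yields a nonzero vector $\lambda = (\lambda_1,\lambda_2)$ and a scalar $\alpha$ with $\lambda_1 u_1 + \lambda_2 u_2 \geq \alpha \geq \lambda_1 v_1 + \lambda_2 v_2$ for all $(u_1,u_2)\in W$ and all $(v_1,v_2)\in\mathbb{R}^2_{<0}$. Because $\mathbb{R}^2_{<0}$ is an unbounded cone whose closure contains $0$, the right-hand inequality forces $\lambda \geq 0$ and $\alpha \geq 0$; because $W$ is a cone containing $0$, the left-hand inequality forces $\alpha \leq 0$, hence $\alpha = 0$ and $\lambda_1 g_1(x) + \lambda_2 g_2(x) \geq 0$ for all $x$, i.e. $\lambda_1 Q_1 + \lambda_2 Q_2 \succeq 0$ with $\lambda \in \mathbb{R}^2_+\setminus\{0\}$.

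The main obstacle — and the part that needs genuine care rather than a one-line citation — is the convexity of the joint range $W$. I would either cite Dines' theorem directly, or sketch its proof: one shows that if $(g_1(x),g_2(x))$ and $(g_1(y),g_2(y))$ are two points of $W$, then by a continuity/connectedness argument along a suitable path (e.g. rotating in the plane spanned by $x$ and $y$, using that $t \mapsto g_i(\cos t\, x + \sin t\, y)$ traces out values interpolating between the endpoints) every point on the segment between them is attained. A secondary subtlety is the degenerate case where $W$ lies in a line or a half-line through the origin; there the separation argument still applies but one should check $\lambda$ can be taken nonzero and nonnegative, which follows since $W$ avoids the open negative quadrant. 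One must also handle the possibility that $W$ is not closed, but since we only separate from an open set this causes no difficulty. Putting these pieces together gives the stated equivalence.
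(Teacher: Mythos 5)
Your proof is correct, but note that the paper does not actually prove \Cref{thm:Slemma2}: it is quoted as a known result (attributed to the S-lemma literature and to \cite{yildiran2009convex}), so there is no in-paper proof to match. Your route --- Dines' theorem on the convexity of the joint range $W=\{(x^\top Q_1x,\,x^\top Q_2x):x\in\mathbb{R}^n\}$, followed by separation of $W$ from the open negative quadrant --- is the classical argument, and the details check out: disjointness of the two convex sets (one open) yields a nonzero $\lambda$, unboundedness of the negative quadrant forces $\lambda\ge 0$, and the fact that both sets are cones with $0$ in their closures forces the separation constant to be $0$, giving $\lambda_1Q_1+\lambda_2Q_2\succeq 0$. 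This is genuinely different from the strategy the paper describes for S-lemmas and carries out for its three-quadratic analogue (\Cref{thm:Slemma3}), namely SDP strong duality plus a rank-reduction theorem (Pataki's \Cref{thm:Pataki} for two quadratics, Barvinok's \Cref{thm:Barvinok} for three). The trade-off is instructive: the Dines route is more elementary and requires no regularity hypothesis whatsoever, but it does not extend, since the joint range of three or more quadratic forms need not be convex --- which is precisely why the paper switches to the SDP/rank-reduction machinery and must then impose the \PDLC{} condition to get boundedness for Barvinok's theorem. You correctly identify the convexity of $W$ as the one step that cannot be dispatched in a line; in a full write-up you should either cite Dines' theorem outright or give the complete rotation-and-scaling argument, since your sketch (the curve $t\mapsto(\cos t)\,x+(\sin t)\,y$) connects the two endpoint values in $W$ but by itself does not yet hit every point of the segment between them.
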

In order to obtain a convex hull result for three quadratics, a similar theorem would be ideal. The S-lemma does have variants that include three quadratic inequalities, such as the following. 

\begin{myproposition}[Proposition 3.6, S-lemma survey~\cite{polik2007survey}]\label{prop:slemmaasy}
Let $n \geq 3$ and $g_0,g_1,g_2: \mathbb{R}^n \rightarrow \mathbb{R}$ be homogeneous quadratic functions:
\[g_i(x) = x^\top Q_i x \]
Assume there is $\hat{x}\in \mathbb{R}^n$ such that $g_1(\hat{x})<0, g_2(\hat x) < 0$, and that there is a linear combination of $Q_0, Q_1, Q_2$ that is positive definite.  Then
\begin{align*}
&\quad \{x\in \mathbb{R}^n \st g_0(x) <0, g_1(x) \leq 0, g_2(x)\leq 0\} = \emptyset  \\
\Longleftrightarrow &\quad  \exists\, (y_1,y_2)\in \mathbb{R}^2_+\quad g_0(x) + y_1g_1(x) + y_2g_2(x) \geq 0 \quad \forall x\in \mathbb{R}^n 
\end{align*}
\end{myproposition}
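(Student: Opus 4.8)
The statement is an equivalence; the direction $(\Leftarrow)$ is trivial, since if $g_0 + y_1 g_1 + y_2 g_2 \geq 0$ everywhere with $y_1,y_2\geq 0$, then a point with $g_0<0$, $g_1\leq 0$, $g_2\leq 0$ would force $g_0(x) \geq -y_1 g_1(x) - y_2 g_2(x) \geq 0$, a contradiction. So all the work is in $(\Rightarrow)$. The plan is to reduce to the classical S-lemma for \emph{two} quadratics (Theorem~\ref{thm:Slemma2}) by first absorbing $g_0$ and one of the $g_i$ into a single quadratic. Concretely, assume the emptiness hypothesis. First I would use the point $\hat{x}$ with $g_1(\hat x)<0$, $g_2(\hat x)<0$ together with the positive-definiteness assumption to set up a compactness/scaling normalization: since the $g_i$ are homogeneous, emptiness of $\{g_0<0,\,g_1\le 0,\,g_2\le 0\}$ over $\mathbb{R}^n$ is equivalent to a statement on the unit sphere, or alternatively on the slice $\{x : g_0(x) = -1\}$ when that slice is nonempty.

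The core step is a homogenization/perturbation argument. Consider for $\varepsilon>0$ the perturbed system $g_0(x) + \varepsilon(g_1(x)+g_2(x)) < 0$, $g_1(x)\le 0$, $g_2(x) \le 0$; I expect this to remain infeasible (or to become so in the limit), which lets me replace the strict inequality $g_0<0$ by a single quadratic $q_\varepsilon := g_0 + \varepsilon(g_1+g_2)$ and fold the problem into: $\{q_\varepsilon(x) < 0,\ g_1(x)\le 0\} $ has empty intersection with $\{g_2 \le 0\}$. The cleaner route, which I would try first, is: from infeasibility of $\{g_0<0,\ g_1\le 0,\ g_2\le 0\}$, deduce that the homogeneous quadratic forms $-g_0$ and, say, $g_1, g_2$ satisfy a two-of-three type positivity, and then invoke a dimension-lifting trick — introduce an auxiliary variable and a diagonal term — to convert the two inequality constraints $g_1\le 0$, $g_2 \le 0$ into a single homogeneous quadratic inequality in $\mathbb{R}^{n+1}$ or $\mathbb{R}^{n+2}$, so that Theorem~\ref{thm:Slemma2} (or its standard inhomogeneous strict version) applies. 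The definiteness hypothesis on a linear combination of $Q_0,Q_1,Q_2$ is exactly what guarantees the relevant limiting multipliers stay bounded and that the separating PSD combination from Theorem~\ref{thm:Slemma2} does not degenerate; the Slater-type point $\hat x$ is what upgrades a nonstrict conclusion to the clean nonnegativity $g_0 + y_1 g_1 + y_2 g_2 \ge 0$ and pins down nonnegativity of $y_1,y_2$.

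I would then extract the multipliers: Theorem~\ref{thm:Slemma2} yields some PSD aggregation $\mu_0 Q_0' + \mu Q_1'' \succeq 0$ of the two reduced forms; unwinding the lifting re-expresses this as $\mu_0 Q_0 + y_1 Q_1 + y_2 Q_2 \succeq 0$ with $\mu_0, y_1, y_2 \ge 0$. The final task is to show $\mu_0 > 0$ (so we can scale to $\mu_0 = 1$): this is where $\hat x$ enters decisively — evaluating the PSD form at $\hat x$ gives $\mu_0 g_0(\hat x) + y_1 g_1(\hat x) + y_2 g_2(\hat x) \ge 0$ with $g_1(\hat x), g_2(\hat x) < 0$, which forces $\mu_0 g_0(\hat x) > 0$ unless all multipliers vanish, and the definiteness assumption rules out $(\mu_0, y_1, y_2) = 0$ being the only option.

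The main obstacle I anticipate is the reduction from \emph{two} inequality constraints $g_1 \le 0,\ g_2\le 0$ to a \emph{single} quadratic constraint in a lifted space while preserving both the infeasibility and the homogeneity structure required by Theorem~\ref{thm:Slemma2} — naive lifts tend to create spurious feasible points at infinity or break the sign pattern of the multipliers. Handling the strict-versus-nonstrict inequalities carefully through the limit $\varepsilon \to 0$, and ensuring the dimension bound $n\ge 3$ is actually used where the two-quadratic S-lemma needs room to maneuver, are the delicate points; everything else is bookkeeping with homogeneous forms.
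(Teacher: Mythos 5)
The paper does not actually prove this proposition: it is imported verbatim as Proposition~3.6 of the P\'olik--Terlaky survey \cite{polik2007survey}, so there is no in-paper argument to compare against, and your proposal has to be judged on its own terms. On those terms it has a genuine gap at its central step. Everything hinges on folding the two constraints $g_1\le 0$, $g_2\le 0$ into a single homogeneous quadratic inequality in a lifted space so that \Cref{thm:Slemma2} applies, and this step is never carried out --- you yourself flag it as the main obstacle. It is not bookkeeping: the two-quadratic S-lemma structurally returns only \emph{two} multipliers, whereas the conclusion here needs three (the implicit $1$ on $g_0$ plus $y_1,y_2$), and no quadratic lift of the pair $g_1\le 0$, $g_2\le 0$ is known that both preserves infeasibility (without spurious points at infinity) and lets the single multiplier produced by \Cref{thm:Slemma2} be unwound into separate nonnegative multipliers on $Q_1$ and $Q_2$. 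The perturbation $q_\varepsilon = g_0+\varepsilon(g_1+g_2)$ does not help either: infeasibility of $\{q_\varepsilon<0,\ g_1\le 0,\ g_2\le 0\}$ is implied by, not equivalent to, the original infeasibility, and you are still left with a three-constraint system. The hypotheses $n\ge 3$ and the positive definite linear combination never do any identifiable work in your outline, which is a warning sign, since they are exactly where the content of the result lives.

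The standard proof (the one behind the survey's Proposition~3.6, due to Polyak) goes through a different door: under precisely these hypotheses, the joint image $\left\{\left(g_0(x),g_1(x),g_2(x)\right) : x\in\mathbb{R}^n\right\}$ is a \emph{closed convex cone} in $\mathbb{R}^3$ (Polyak's convexity theorem for three quadratic forms). Emptiness of the system says this cone misses the convex set $\{u \in \mathbb{R}^3 : u_0<0,\ u_1\le 0,\ u_2\le 0\}$; a separating hyperplane yields $(\mu_0,y_1,y_2)\ge 0$, not all zero, with $\mu_0 g_0+y_1g_1+y_2g_2\ge 0$ on $\mathbb{R}^n$; and the Slater point $\hat x$ with $g_1(\hat x)<0$, $g_2(\hat x)<0$ forces $\mu_0>0$, so one can normalize $\mu_0=1$. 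This last normalization step is the one piece of your plan that is correct and standard. Alternatively, you could mimic the route the paper uses for the closely related symmetric variant (\Cref{thm:Slemma3}): SDP duality plus the Pataki and Barvinok rank-reduction theorems. Note that the authors explicitly remark that they see no direct way to pass between the two three-quadratic S-lemmas, which is further evidence that a reduction to the two-quadratic case is not the right lever here.
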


Note that this S-lemma is ``asymmetrical'', in the sense that one inequality is singled out from the other two. In \Cref{thm:Slemma2} this is not the case, as both inequalities are treated symmetrically. 
In our development, a symmetric version of the \Cref{prop:slemmaasy} is desirable, and we thus prove the following.

\begin{restatable}{mylemma}{Slemmathree}\label{thm:Slemma3}
Let $n \geq 3$ and let $g_1,g_2,g_3: \mathbb{R}^n \rightarrow \mathbb{R}$ be homogeneous quadratic functions:
\[g_i(x) = x^\top Q_i x.\]
Assuming there is a linear combination of $Q_1, Q_2, Q_3$ that is positive definite, the following equivalence holds
\[\{x\in \mathbb{R}^n:  g_i(x) < 0,\, i\in [3]\} = \emptyset \Longleftrightarrow \exists \lambda \in \mathbb{R}^3_+ \setminus \{0\},\, \sum_{i=1}^3 \lambda_i Q_i \succeq 0.\]
\end{restatable}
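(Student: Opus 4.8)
The plan is to derive \Cref{thm:Slemma3} from the asymmetric \Cref{prop:slemmaasy} by a limiting/perturbation argument. The ``$\Longleftarrow$'' direction is immediate: if $\sum_i \lambda_i Q_i \succeq 0$ with $\lambda \in \mathbb{R}^3_+\setminus\{0\}$, then for any $x$ we have $\sum_i \lambda_i g_i(x) \geq 0$, which is incompatible with $g_i(x)<0$ for all $i\in[3]$. So the content is in the ``$\Longrightarrow$'' direction, and here is where I would invoke \Cref{prop:slemmaasy}. Suppose $\{x : g_i(x)<0,\, i\in[3]\} = \emptyset$. I would like to single out $g_1$ as the ``$g_0$'' of the proposition; but to apply it I need a point $\hat x$ with $g_2(\hat x)<0$ and $g_3(\hat x)<0$. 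If no such point exists, then $\{x: g_2(x)<0, g_3(x)<0\}=\emptyset$ already, and \Cref{thm:Slemma2} applied to $g_2,g_3$ gives $\lambda_2 Q_2 + \lambda_3 Q_3 \succeq 0$ with $(\lambda_2,\lambda_3)\in\mathbb{R}^2_+\setminus\{0\}$, which is the desired conclusion (set $\lambda_1=0$).

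So assume such a $\hat x$ exists. Then \Cref{prop:slemmaasy} (with $g_0=g_1$, and noting that the positive-definite-combination hypothesis carries over verbatim) yields $(y_2,y_3)\in\mathbb{R}^2_+$ with $g_1(x) + y_2 g_2(x) + y_3 g_3(x) \geq 0$ for all $x$, i.e. $Q_1 + y_2 Q_2 + y_3 Q_3 \succeq 0$. Taking $\lambda = (1, y_2, y_3)\in\mathbb{R}^3_+\setminus\{0\}$ finishes it. In fact this shows the conclusion always holds under the stated hypothesis, and the case split on the existence of $\hat x$ is just to legitimately invoke the proposition. One subtlety worth checking: \Cref{prop:slemmaasy} requires $n\geq 3$, which we are assuming, and it requires the \emph{same} three quadratics to admit a positive-definite linear combination — since $\{Q_1,Q_2,Q_3\}$ is exactly our set, this is fine; in the degenerate case I only use \Cref{thm:Slemma2}, which has no such requirement.

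I do not anticipate a serious obstacle here, since the reduction is essentially a relabeling. The one place that needs a little care is confirming that the empty-set hypotheses line up correctly: $\{x: g_0(x)<0, g_1(x)\leq 0, g_2(x)\leq 0\}=\emptyset$ in \Cref{prop:slemmaasy} uses two \emph{non-strict} inequalities, whereas our hypothesis is that the fully \emph{strict} system is infeasible. So I must verify that $\{x: g_1(x)<0, g_2(x)\leq 0, g_3(x)\leq 0\}=\emptyset$ follows from $\{x: g_i(x)<0,\,i\in[3]\}=\emptyset$ together with the existence of the positive-definite combination. This is the only genuinely non-trivial step: the strict system being empty does not by itself imply the mixed strict/non-strict system is empty. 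I expect to handle it via a homogeneity/scaling and closure argument — if $g_1(x)<0$, $g_2(x)\leq 0$, $g_3(x)\leq 0$ had a solution $x$, a small perturbation of $x$ in a direction that strictly decreases $g_2$ and $g_3$ (using the positive-definite combination to produce such a direction, or perturbing $Q_2,Q_3$ slightly and passing to the limit) would produce a point in the strict system, a contradiction. Making this perturbation argument airtight, and checking it interacts correctly with the positive-definiteness hypothesis, is the main thing the proof must get right; everything else is bookkeeping.
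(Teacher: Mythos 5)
There is a genuine gap, and it sits exactly where you flag it: the passage from emptiness of the fully strict system $\{g_1<0,\,g_2<0,\,g_3<0\}$ to emptiness of the mixed system $\{g_1<0,\,g_2\le 0,\,g_3\le 0\}$ that \Cref{prop:slemmaasy} requires. This is not bookkeeping; as a standalone implication it is false. Take $n=3$, $Q_1=-I$, $Q_2=\mathrm{diag}(1,-1,0)$, $Q_3=-Q_2$: the strict system is empty and \PDLC{} holds ($-Q_1\succ 0$), yet $x=(0,0,1)$ lies in the mixed system. (That particular example falls into your degenerate branch because $\{g_2<0,g_3<0\}=\emptyset$, but it shows the reduction cannot rest on a generic perturbation.) In your main branch, where $\hat x$ with $g_2(\hat x)<0$, $g_3(\hat x)<0$ exists, the implication does happen to be true --- but the only proof of it I see goes \emph{through} \Cref{thm:Slemma3} itself: use the lemma to get $\lambda$ with $\sum_i\lambda_iQ_i\succeq 0$, use $\hat x$ and \Cref{thm:Slemma2} to force $\lambda_1>0$, and then contradict the existence of a mixed-system point. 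Invoked inside your proof, that is circular. Your sketched perturbation fix breaks in the degenerate configurations, e.g.\ $g_2(x^*)=g_3(x^*)=0$ with $Q_2x^*=-\mu Q_3x^*\neq 0$, $\mu>0$, where every first-order descent direction for $g_2$ is an ascent direction for $g_3$; and the \PDLC{} hypothesis, whose weights $\theta$ may be negative, does not manufacture a joint descent direction.

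Note that the paper explicitly remarks it does not see a direct way to derive \Cref{thm:Slemma3} from \Cref{prop:slemmaasy}, and instead gives a direct proof: after the same reduction you make to the case $\{g_2<0,g_3<0\}\neq\emptyset$ via \Cref{thm:Slemma2}, it sets up an SDP whose dual is infeasible under the negation of the conclusion, uses Slater plus SDP duality to produce $\hat X\succeq 0$ with $\langle \hat X,Q_i\rangle<0$ for $i\in[3]$, and then reduces the rank of $\hat X$ to one using \Cref{thm:Pataki} or \Cref{thm:Barvinok} --- the latter being where \PDLC{} is actually consumed, to guarantee boundedness of the relevant spectrahedron. To salvage your route you would need a non-circular proof of the strict-to-mixed reduction; as written, the proposal does not contain one.
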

It is important to mention that in the case of two quadratics an asymmetrical version of  \Cref{thm:Slemma2} is well known, and the equivalence between both versions can easily be established. However, in the case of three quadratics, we do not see a direct way of proving \Cref{thm:Slemma3} from \Cref{prop:slemmaasy} and thus we present a direct proof.


Just as one can prove the Farkas Lemma as a consequence of strong duality for linear programming, one proof of the original two-quadratic-constraints S-lemma can be seen as the consequence of strong duality for semidefinite programming (SDP) and a `rank reduction' result. With the latter, feasibility of the primal SDP implies the existence of a rank-one solution for the SDP ---this yields feasibility of the original quadratic constraints. 

For the two-quadratic-constraints S-lemma, the classical result of Pataki~\cite{pataki1998rank} (see \Cref{thm:Pataki} in \Cref{sec:prelimopt}) suffices to accomplish the rank reduction. 
In the case of three-quadratic-constraints S-lemma, Pataki's result does not suffice and we rely on a similar result due to Barvinok~\cite{barvinok2001remark} that holds under a boundedness condition (see \Cref{thm:Barvinok} in \Cref{sec:prelimopt}). The proof of \Cref{thm:Slemma3} can be found in \Cref{sec:proofslemma} based on the outline presented above.

\subsection{The convex hull of three quadratic constraints: open case}

The main result of this paper provides sufficient conditions for the convex hull of a set defined by three quadratic inequalities to be given by aggregations. Specifically:
\begin{restatable}{theorem}{thmopen}\label{thm:opencase}
Let $n \geq 3$ and
\[ \genericquadopen = \left\{ x \in \mathbb{R}^n : [x\quad 1] \begin{bmatrix} A_i & b_i \\ b^{\top}_i & c_i  \end{bmatrix} \begin{bmatrix} x \\ 1 \end{bmatrix} < 0,\ i \in [3] \right\} . \] 
Assume 
\begin{itemize}
\item (Positive definite linear combination, or \PDLC{}) There exists $\theta\in \mathbb{R}^3$ such that 
\[\sum_{i=1}^3 \theta_i \begin{bmatrix} A_i & b_i \\ b^{\top}_i & c_i  \end{bmatrix} \succ 0.\]
\item (Non-trivial convex hull) $\textup{conv}(\genericquadopen)\neq \mathbb{R}^n.$
\end{itemize}
Let
\[\Omega \coloneqq \left\{\lambda \in \mathbb{R}^3_+\st \genericquadopen_\lambda \supseteq \conv(\genericquadopen) \textup{ and } \nu(\genericquadopen_\lambda) \leq 1 \right\}, \] 
where $\genericquadopen_\lambda = \left\{ x\in \mathbb{R}^n: [x\quad 1] \left(\sum_{i =1}^3 \lambda_i\begin{bmatrix} A_i & b_i \\ b^{\top}_i & c_i  \end{bmatrix}\right) \begin{bmatrix} x \\ 1 \end{bmatrix} < 0 \right\}$
and $\nu(\genericquadopen_\lambda)$ is the number of negative eigenvalues of $\sum_{i=1}^3 \lambda_i A_i$.
Then
\[\conv(\genericquadopen) = \bigcap_{\lambda\in \Omega} \genericquadopen_\lambda. \]
\end{restatable}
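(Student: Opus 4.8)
The plan is to follow the high-level template of Yildiran's argument for two quadratics, replacing the two-quadratic S-lemma by our symmetric three-quadratic version (\Cref{thm:Slemma3}). The inclusion $\conv(\genericquadopen)\subseteq\bigcap_{\lambda\in\Omega}\genericquadopen_\lambda$ is the easy direction: for any $\lambda\in\mathbb{R}^3_+$ the aggregated constraint is a nonnegative combination of valid strict inequalities, so $\genericquadopen\subseteq\genericquadopen_\lambda$, and whenever $\genericquadopen_\lambda$ happens to be convex (which is exactly what $\nu(\genericquadopen_\lambda)\le 1$ buys us, via the standard fact that a strict quadratic inequality $q(x)<0$ whose quadratic form has at most one negative eigenvalue defines a convex set — a branch of a Lorentz-type region) we get $\conv(\genericquadopen)\subseteq\genericquadopen_\lambda$. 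Intersecting over $\lambda\in\Omega$ gives one containment. Note $\Omega$ is defined so that each $\genericquadopen_\lambda$ with $\lambda\in\Omega$ already contains $\conv(\genericquadopen)$, so this direction is almost immediate; the content is that $\Omega$ is nonempty and rich enough for the reverse inclusion.

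For the reverse inclusion $\bigcap_{\lambda\in\Omega}\genericquadopen_\lambda\subseteq\conv(\genericquadopen)$, I would argue contrapositively: take $\bar x\notin\conv(\genericquadopen)$ and produce $\lambda\in\Omega$ with $\bar x\notin\genericquadopen_\lambda$. Since $\conv(\genericquadopen)$ is a nontrivial convex set (by hypothesis $\neq\mathbb{R}^n$, and it is open as a convex hull of an open set), there is a separating hyperplane: a closed halfspace $H=\{x: h^\top x\le \delta\}$ (or its homogenized cone version) with $\conv(\genericquadopen)\subseteq H$ but $\bar x\notin\inte H$. Now homogenize: passing to $\genericquadopen^h\subseteq\mathbb{R}^{n+1}$, the statement ``$H\supseteq\genericquadopen$'' becomes a statement that a certain homogeneous quadratic (in fact, a product of the linear form $h^\top x-\delta x_{n+1}$ with $x_{n+1}$, or the square of a linear form) is valid on the cone cut out by the three homogeneous quadratics $g_1,g_2,g_3$. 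More precisely, the obstruction we want — emptiness of $\{x : g_i(x)<0,\ i\in[3]\}\cap\{\text{the separating condition fails}\}$ — should be phrased as emptiness of an intersection of three homogeneous strict quadratic inequalities (combining the separating linear information with two of the $g_i$'s, or by a limiting/perturbation argument to get three strict ones), so that \Cref{thm:Slemma3} applies and yields $\lambda\in\mathbb{R}^3_+\setminus\{0\}$ with $\sum\lambda_i Q_i\succeq 0$. The \PDLC{} hypothesis is exactly the ``linear combination positive definite'' hypothesis needed to invoke \Cref{thm:Slemma3}.

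The key point is then to extract from $\sum_i\lambda_i Q_i\succeq 0$ the two facts that (i) $\genericquadopen_\lambda\supseteq\conv(\genericquadopen)$ and (ii) $\nu(\genericquadopen_\lambda)\le 1$, i.e.\ that $\lambda\in\Omega$, together with $\bar x\notin\genericquadopen_\lambda$. For (ii): the homogenized aggregated matrix $\sum_i\lambda_i\begin{bmatrix}A_i & b_i\\ b_i^\top & c_i\end{bmatrix}$ is, up to the separating-form contribution, positive semidefinite, so removing the contribution of a single rank-one (linear) piece can create at most one negative eigenvalue in the $n\times n$ block $\sum_i\lambda_i A_i$ — this is an interlacing/Cauchy eigenvalue argument and is where the bound ``$\le 1$'' comes from. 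For (i) and the strict violation at $\bar x$, one dehomogenizes carefully, checking the sign of $x_{n+1}$ and that the PDLC direction guarantees the aggregated set stays a genuine (convex) branch rather than degenerating. I expect the main obstacle to be precisely this bookkeeping in the homogenization/dehomogenization: setting up exactly the right three homogeneous quadratic forms whose joint strict infeasibility encodes ``$\bar x$ is separated from $\conv(\genericquadopen)$'', handling the case distinctions on the sign of the last coordinate and on whether the separating set is given by one linear form or by a pair, and confirming that the $\lambda$ returned by \Cref{thm:Slemma3} is nonzero in the right components so that $\genericquadopen_\lambda$ is a well-defined proper quadratic region with $\nu\le 1$. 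Closing these gaps — especially ensuring strictness so that \Cref{thm:Slemma3} (which needs strict inequalities) is legitimately applicable, presumably via a perturbation of the weights or of $\delta$ and a compactness/limit argument using PDLC for the boundedness needed by Barvinok's theorem behind \Cref{thm:Slemma3} — is the technical heart of the proof.
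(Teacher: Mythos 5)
Your high-level strategy matches the paper's (separate, homogenize, invoke the symmetric three-quadratic S-lemma, dehomogenize, control the eigenvalue count by interlacing), but the two steps that carry the actual mathematical weight are left as acknowledged ``bookkeeping,'' and in both cases the sketch you give would not close them. First, the way you propose to bring \Cref{thm:Slemma3} to bear --- encoding the separation by combining the separating linear information with two of the $g_i$'s into three new ambient quadratics, possibly after a perturbation and limit argument --- is not the right move and is where your proof would stall. The paper's mechanism (\Cref{thm:separation}) is cleaner: the homogenized separating hyperplane $H=\{(x,x_{n+1}) : \alpha^\top x=(\alpha^\top\hat x)\,x_{n+1}\}$ is an $n$-dimensional \emph{linear subspace} of $\mathbb{R}^{n+1}$ that is disjoint from $S^h$ (this is \Cref{prop:validineqseparates}, and it is exactly where the hypothesis $\conv(S)\neq\mathbb{R}^n$ is used, to rule out points of $S^h$ with $x_{n+1}=0$); parameterizing $H=\{Uw\}$, the three \emph{restricted} forms $w^\top U^\top Q_iUw$ have no common strictly negative point, the PDLC condition is inherited by the matrices $U^\top Q_iU$, and \Cref{thm:Slemma3} applied in dimension $n\ge 3$ gives $\lambda\in\mathbb{R}^3_+$ with $\sum_i\lambda_iU^\top Q_iU\succeq 0$, i.e.\ $(S^h)_\lambda\cap H=\emptyset$. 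No perturbation or compactness argument is needed at this stage.

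Second, you do not actually prove $\conv(S)\subseteq S_\lambda$ for the $\lambda$ you obtain; note that $S\subseteq S_\lambda$ is automatic, but $S_\lambda$ is in general nonconvex, so containment of the hull is a real claim. Your stated justification --- that $\nu(S_\lambda)\le 1$ makes $S_\lambda$ convex --- is false: for instance $\{x : -x_1^2+x_2^2<-1\}$ has one negative eigenvalue and two connected components. The paper's argument instead uses \Cref{thm:SCC}: since the linear hyperplane $H$ misses $(S^h)_\lambda$, the latter is a semi-convex cone faux-separated by $H$, so its intersection with the open halfspace $\{\alpha^\top x<(\alpha^\top\hat x)x_{n+1}\}$ is a \emph{convex} set containing $S\times\{1\}$, hence containing $\conv(S)\times\{1\}$; slicing at $x_{n+1}=1$ gives $\conv(S)\subseteq S_\lambda$, while $(\hat x,1)\in H$ gives $\hat x\notin S_\lambda$. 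The same SCC fact, combined with Cauchy interlacing (\Cref{thm:Cauchy}) applied to the leading $n\times n$ principal submatrix, is what yields $\nu(S_\lambda)\le 1$, rather than the ``remove a rank-one piece'' heuristic you describe. Until you replace your encoding of the separation with the subspace-restriction argument and supply the SCC branch-convexity step, the proof has genuine gaps at its core.
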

Our proof of \Cref{thm:opencase} in presented in \Cref{sec:proofthmopencase} and follows the following arguments.
We consider any point $\hat{x}\not\in \textup{conv}(\genericquadopen)$ and set our task to proving that there is a $\lambda \in \Omega$ such that $\hat{x}\not\in \genericquadopen_{\lambda}$. 
We begin by selecting a hyperplane separating $\hat{x}$ from $\textup{conv}(\genericquadopen)$ and homogenizing both this hyperplane and the set $\genericquadopen$. Effectively, in the linear subspace defined by the homogenized hyperplane, the three homogeneous quadratic constraints define an infeasible set (\Cref{prop:validineqseparates}, \Cref{sec:proofopencase1}). From here, we apply the S-lemma (\Cref{thm:Slemma3}) and obtain a $\lambda \in \mathbb{R}^3_+$ such that the quadratic form obtained by aggregating the homogeneous quadratic constraints with $\lambda$ does not intersect the homogenized hyperplane (\Cref{thm:separation}, \Cref{sec:proofopencase2}).
%
%
Finally, we complete the proof of \Cref{thm:opencase} (\Cref{sec:puttingittogetheropencase}) by showing that, after a ``dehomogenization'', this implies (i) $\hat{x}\not\in \genericquadopen_\lambda$ and (ii) $\lambda \in \Omega$.


We note that, even though the high-level approach of our proof of \Cref{thm:opencase} is similar to the one by Yildiran~\cite{yildiran2009convex}, the proof itself is simpler.  Yildiran uses the S-lemma in its two-quadratic version, but also heavily depends on several structural results regarding the pencil of two quadratics ---our proof essentially only uses the S-lemma.  
However, the result by Yildiran is stronger; our simplification of the proof, and the lack of existence of the exact same version of S-lemma for three quadratic constraints, lead to some important differences between \Cref{thm:opencase} and the analogous result in \cite{yildiran2009convex}: 
\begin{itemize}
\item \Cref{thm:opencase} requires the \PDLC{} condition, unlike Yildiran's result which does not need any such condition. This condition is required since we need a similar condition for the S-lemma (\Cref{thm:Slemma3}).
\item The set $\Omega$ may not be finite, unlike Yildiran's result which shows that one only needs two aggregations for constructing the convex hull of two quadratic inequalities. In our case, we currently do not know if we require only a finite subset of $\Omega$ to obtain $\textup{conv}(S)$ for the three quadratic constraints case.
\end{itemize}



The next example shows an application of \Cref{thm:opencase}.
\begin{example}\label{example:1}
Consider the following set:
$$\genericquadopen = \left\{ x \in \mathbb{R}^3 : [x\quad 1] \begin{bmatrix} A_i & b_i \\ b^{\top}_i & c_i  \end{bmatrix} \begin{bmatrix} x \\ 1 \end{bmatrix} < 0, \, i \in [3] \right\},$$
where, 
\begin{itemize}
\item $A_1 =  \left[ \begin{array}{ccc} 1& 0 & 0\\ 0 & 1 & 0 \\ 0 & 0 & 0 \end{array}\right] $, $b^{\top}_1 = [0 \ 0 \ 0]$, $c_1 = -2 $ 
\item $A_2 = \left[ \begin{array}{ccc} -1& 0 & 0\\ 0 & -1 & 0 \\ 0 & 0 & 0 \end{array} \right] $, $b^{\top}_2 = [0 \ 0 \ 0]$, $c_2 = 1$ 
\item $A_3 = \left[ \begin{array}{ccc} -1& 0 & 0\\ 0 & 1 & 0 \\ 0 & 0 & 1 \end{array}\right] $, $b^{\top}_3 = [3 \ 0 \ 0]$, $c_3 = 0$ 
\end{itemize}
Note that $$-12\cdot\begin{bmatrix} A_1 & b_1 \\ b^{\top}_1 & c_1  \end{bmatrix} - 15\cdot\begin{bmatrix} A_2 & b_2 \\ b^{\top}_2 & c_2 \end{bmatrix} +  1\cdot\begin{bmatrix} A_3 & b_3 \\ b^{\top}_3 & c_3  \end{bmatrix}= \left[\begin{array}{cccc}  2& 0& 0& 3\\ 0&4& 0& 0 \\ 0& 0& 1 & 0 \\3 &0 &0 &9\end{array}\right] \succ 0,$$ i.e. the  \PDLC{} condition holds. In addition, it is easy to verify that S is bounded, thus $\textup{conv}(\genericquadopen) \neq \mathbb{R}^3$. Therefore, by \Cref{thm:opencase} the convex hull is given by the intersection of a family of aggregations. Indeed, in this case the convex hull of $S$ is given by 
$$\textup{conv}(\genericquadopen) = \{x : x_1^2 + x_2^2 - 2 < 0, -x_1^2 + x_2^2 + x_3^2 + 6x_1 < 0, -2x_1^2 + x_3^2 + 6x_1 + 1 <0\},$$
where the first two inequalities correspond to the first and the third inequality describing $S$, and the last inequality is an aggregation with $\lambda = (0, 1,1)$. 

Note also that the matrices $A_1,A_3, A_2 + A_3$, each corresponding to the quadratic part of a constraint in the description of $\textup{conv}(\genericquadopen)$, all have at most one negative eigenvalue.
 Moreover, the second and third constraints are not describing convex sets on their own, but combined with the first constraint they yield a $\conv(\genericquadopen)$.
In \Cref{fig:finiteaggregation} we show $\genericquadopen$ and $\conv(S)$ for this example.

\begin{figure}
    \centering
    \includegraphics[scale=0.25]{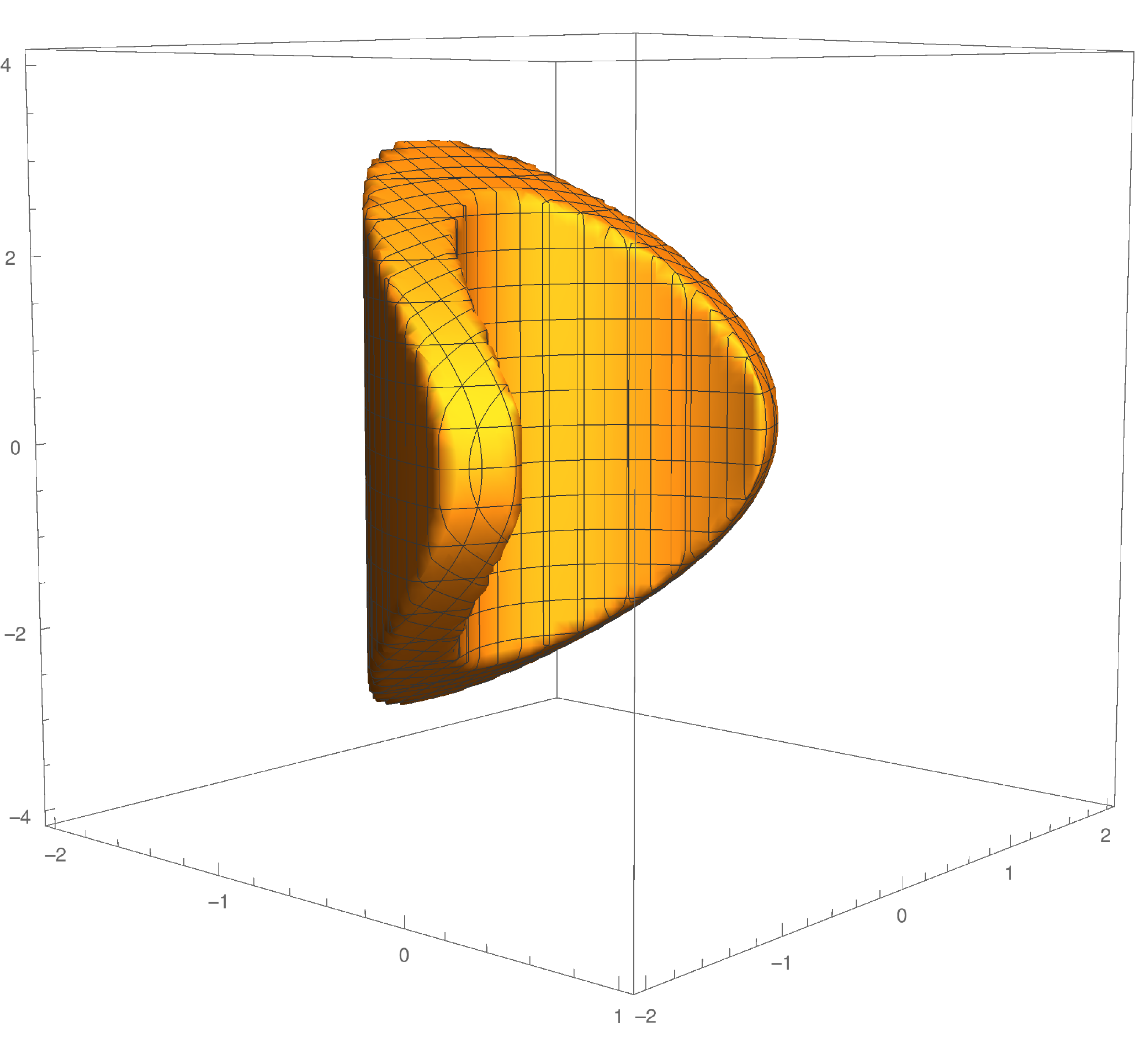}\hspace{.4cm}
    \includegraphics[scale=0.25]{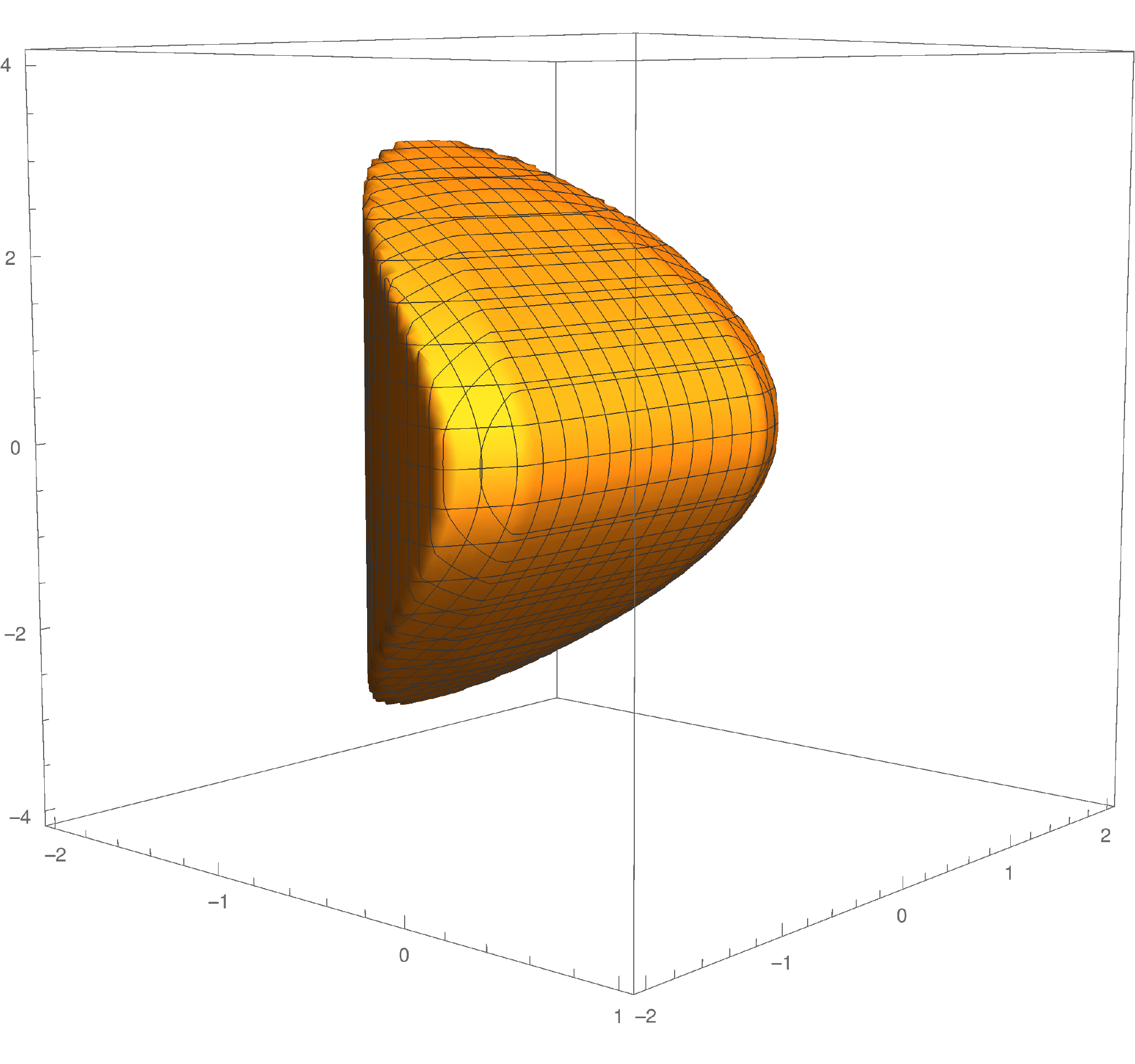}
    \caption{Plots of sets $\genericquadopen$ (left) and $\conv(\genericquadopen)$ (right) for \Cref{example:1}. In this example, the convex hull is obtained via three aggregations of the three quadratic inequalities describing $\genericquadopen$.}
    \label{fig:finiteaggregation}
\end{figure}

Finally, we note that in this case restricting to any pair of constraints of $\genericquadopen$, and aggregations thereof, does not yield the convex hull. 

\end{example}

\subsection{Counterexamples}

The next examples evaluate how important are the requirements of \Cref{thm:opencase}; they show that this theorem is indeed a reasonable demarcation of cases where aggregation can yield the desired convex hull.
We remark that in the examples that follow,
our claims require formal proofs, which we relegate to \Cref{sec:4quadproof,sec:3quadNotPDLC}.

We begin with an example showing that an extension of \Cref{thm:opencase} to four quadratics is unlikely.
We obtained this example from modifying an example of \cite{polik2007survey} (also see \cite[Exercise 3.58]{NemirovskiNotes}) which the authors used to argue that a generalization of the S-lemma to four inequalities is improbable.
Here, we further elaborate on a variation of it to make it fit our purposes.
\begin{restatable}{myproposition}{fourquadexample}\label{prop:4quadexample}
Consider the following set:
\[\genericquadopen = \left\{ x \in \mathbb{R}^3 : [x\quad 1] \begin{bmatrix} A_i & 0 \\0 & c_i  \end{bmatrix} \begin{bmatrix} x \\ 1 \end{bmatrix} < 0, \ i \in [4] \right\},\]
where, 
\begin{itemize}
\item $A_1 =  \left[ \begin{array}{ccc} 1& 1.1 & 1.1\\ 1.1 & 1 & 1.1 \\ 1.1 & 1.1 & 1 \end{array}\right] $, $c_1 = -1$ 
\item $A_2 = \left[ \begin{array}{ccc} -2.1& 0 & 0\\ 0 & 1 & 0 \\ 0 & 0 & 1 \end{array} \right] $, $c_2 = 0$ 
\item $A_3 = \left[ \begin{array}{ccc} 1& 0 & 0\\ 0 & -2.1 & 0 \\ 0 & 0 & 1 \end{array}\right] $, $c_2 = 0$ 
\item $A_4 = \left[ \begin{array}{ccc} 1& 0 & 0\\ 0 & 1 & 0 \\ 0 & 0 & -2.1 \end{array} \right] $, $c_2 = 0$ 
\end{itemize}
In this case,
\begin{itemize}
    \item \PDLC{} is satisfied.
    \item $\textup{conv}(\genericquadopen) \neq \mathbb{R}^3$
\end{itemize}
However,
\[\conv(S) \subsetneq \bigcap \{ \genericquadopen_\lambda \st \lambda \in \mathbb{R}^3_+,\, \genericquadopen_\lambda \supseteq \conv(\genericquadopen) \}.\]
In particular, \(\conv(S) \neq \bigcap_{\lambda\in \Omega} \genericquadopen_\lambda\).
\end{restatable}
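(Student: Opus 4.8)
The plan is to verify the two hypotheses by explicit computation and then, for the main (negative) assertion, to exhibit a specific point that lies in every aggregated relaxation $\genericquadopen_\lambda$ containing $\conv(\genericquadopen)$ but not in $\conv(\genericquadopen)$ itself. First I would check \PDLC{}: since all the off-diagonal blocks vanish, it suffices to find $\theta \in \mathbb{R}^4$ with $\sum_i \theta_i A_i \succ 0$ and $\sum_i \theta_i c_i > 0$; taking $\theta_1$ small and positive (to control the $c_1$ term) together with suitable positive $\theta_2,\theta_3,\theta_4$ makes $\sum_i\theta_i A_i$ strictly diagonally dominant with positive diagonal, hence positive definite, and $\sum_i\theta_i c_i = -\theta_1 < 0$ — so one must instead flip signs, e.g. take $\theta_1<0$ with $|\theta_1|$ small, which still keeps $\sum_i\theta_iA_i\succ0$ by dominance and yields $\sum_i\theta_ic_i=-\theta_1>0$; in fact any sign works since the $c_i$ with $i\ge2$ are zero, so the constraint on $\theta_1$ is just $\theta_1 \ne 0$ with the right sign. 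For the non-triviality of the convex hull, I would show $\genericquadopen$ is nonempty (the origin satisfies all four strict inequalities: $c_1=-1<0$ and $0<0$ fails — so instead I would perturb, noting $g_i(x)<0$ is an open condition and checking a small nonzero point, or directly argue boundedness) and, more importantly, that $\conv(\genericquadopen)\ne\mathbb{R}^3$, which follows from exhibiting one globally valid aggregated inequality with bounded sublevel set: a positive combination of the four constraints whose quadratic part is positive definite (available by \PDLC{} with the appropriate signs) gives a bounded $\genericquadopen_\lambda \supseteq \genericquadopen$, hence $\supseteq \conv(\genericquadopen)$, so $\conv(\genericquadopen)$ is bounded.

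The heart of the argument is the strict inclusion. The construction of the original Pólik–Terlaky/Nemirovski example is that the four quadratics $g_1=x_1^2+x_2^2+x_3^2 - 1 + (\text{small cross terms})$ and $g_i = x_j^2+x_k^2 - 2.1\,x_i^2$ for $\{i\}\cup\{j,k\}=\{2,3,4\}$-indexing have the property that no nonnegative aggregation is globally nonnegative, yet the system is infeasible — this is exactly the failure of a four-quadratic S-lemma. I would translate that failure into the convex-hull language: pick the separating direction for a carefully chosen $\hat x \notin \conv(\genericquadopen)$, homogenize, and observe that the homogenized system is infeasible on the corresponding subspace but (by the S-lemma failure) \emph{not} certifiable by any single nonnegative aggregation whose aggregated form is nonnegative on that subspace. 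Concretely, I expect the cleanest route is: (i) compute $\conv(\genericquadopen)$ or at least a point $\hat x$ just outside it — e.g. a point on the boundary of the convex hull obtained as a midpoint of two points of $\genericquadopen$ lying in different "lobes" cut out by the $g_i$, $i\ge 2$; (ii) show that for every $\lambda\in\mathbb{R}^3_+$ (note: three free multipliers after normalizing, or four — I would keep all four and normalize) with $\genericquadopen_\lambda\supseteq\conv(\genericquadopen)$, the point $\hat x$ still satisfies the aggregated strict inequality, i.e. $[\hat x\ 1](\sum_i\lambda_i\,\mathrm{diag}(A_i,c_i))[\hat x\ 1]^\top < 0$. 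The set of $\lambda$ with $\genericquadopen_\lambda\supseteq\conv(\genericquadopen)$ is itself cut out by the condition that the aggregated quadratic be $\le 0$ on all of $\conv(\genericquadopen)$; I would parametrize this set and show the supremum of the aggregated form at $\hat x$ over it is still negative.

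The main obstacle will be step (ii): controlling \emph{all} valid aggregations simultaneously. The symmetry of the example under permuting coordinates $2,3,4$ (and the corresponding permutation of $g_2,g_3,g_4$) is the key tool — it lets me restrict attention to the "symmetric" aggregation $\lambda_2=\lambda_3=\lambda_4$ when searching for the tightest relaxation at a symmetrically chosen $\hat x$, because averaging a valid aggregation over the symmetry group yields another valid aggregation that is at least as tight at symmetric points. Thus I would choose $\hat x$ on the symmetry axis $(t,t,t)$ or a permutation-fixed point, reduce to a one- or two-parameter family of $\lambda$, and finish with an explicit univariate optimization showing the aggregated value at $\hat x$ stays strictly negative while $\hat x$ is genuinely outside $\conv(\genericquadopen)$ (the latter verified by producing a separating hyperplane). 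The numerology ($1.1$ and $2.1$) is chosen precisely so these inequalities are strict but tight, so I anticipate the bookkeeping — not any conceptual difficulty — to be the real work; I would relegate the arithmetic to \Cref{sec:4quadproof} as the statement promises.
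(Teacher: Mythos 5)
Your high-level goal is the right one (exhibit a point outside $\conv(\genericquadopen)$ that survives every valid aggregation), but the execution you sketch has a genuine gap and misses the idea that makes the paper's proof finite. The paper does \emph{not} characterize or parametrize the set of valid aggregations. It picks \emph{two} explicit points: $x^1=\bigl(\tfrac{1207}{1000},-\tfrac{117}{4000},-\tfrac{117}{4000}\bigr)\in\conv(\genericquadopen)$ (a midpoint of two exhibited points of $\genericquadopen$) and $x^2=(10,-5,-5)\notin\conv(\genericquadopen)$. Any $\lambda\ge 0$ with $\genericquadopen_\lambda\supseteq\conv(\genericquadopen)$ must satisfy $q_\lambda(x^1)<0$, so it suffices to show that the system $q_\lambda(x^1)<0$, $q_\lambda(x^2)\ge 0$, $\lambda\ge 0$, $\sum_i\lambda_i=1$ is infeasible --- a four-variable linear feasibility problem settled by an explicit Farkas certificate. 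Your plan instead requires controlling \emph{all} $\lambda$ with $\genericquadopen_\lambda\supseteq\conv(\genericquadopen)$, a set cut out by infinitely many constraints (one per point of the hull), and the symmetrization you propose does not rescue it: on the full symmetry axis the aggregations degenerate to $\mu\,g_1(x)-0.1\,t\,\|x\|^2$ (since $A_2+A_3+A_4=-0.1\,I$), so deciding whether a point $(s,s,s)$ just outside the hull is excluded collapses to deciding whether $\{g_1<0\}\supseteq\conv(\genericquadopen)$ --- a question you do not address and which is not obviously in your favor ($A_1$ has two negative eigenvalues). You would at minimum have to retreat to a point fixed only by a transposition and still face a multi-parameter validity region you cannot describe without already knowing the hull.

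Two further concrete problems. First, certifying $x^2\notin\conv(\genericquadopen)$ is not routine here: no nonnegative combination of the $A_i$ is positive definite and the SDP relaxation is unbounded, so ``a positive combination with positive definite quadratic part'' does not exist; the paper instead proves boundedness ($\genericquadopen\subseteq[-8,8]^3$) by an orthant-by-orthant aggregation with the absolute-value relaxation $x_1^2+x_2^2+x_3^2<\tfrac{31}{30}(|x_1x_2|+|x_2x_3|+|x_1x_3|)$. Second, your \PDLC{} sketch has a sign error: with $\theta_2,\theta_3,\theta_4>0$ the diagonal entries of $\sum_{i\ge 2}\theta_iA_i$ sum to $-0.1(\theta_2+\theta_3+\theta_4)<0$, so that combination can never be positive definite; the certificate requires all four multipliers negative (the paper uses $\theta=(-1,-40,-40,-40)$, for which $-40(A_2+A_3+A_4)=4I$ and $-A_1+4I$ is diagonally dominant).
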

In \Cref{fig:counterex4quads} we show the set $S$ considered in \Cref{prop:4quadexample}. The proof of this proposition can be found in \Cref{sec:4quadproof}.

\begin{figure}
    \centering
    \includegraphics[scale=0.25]{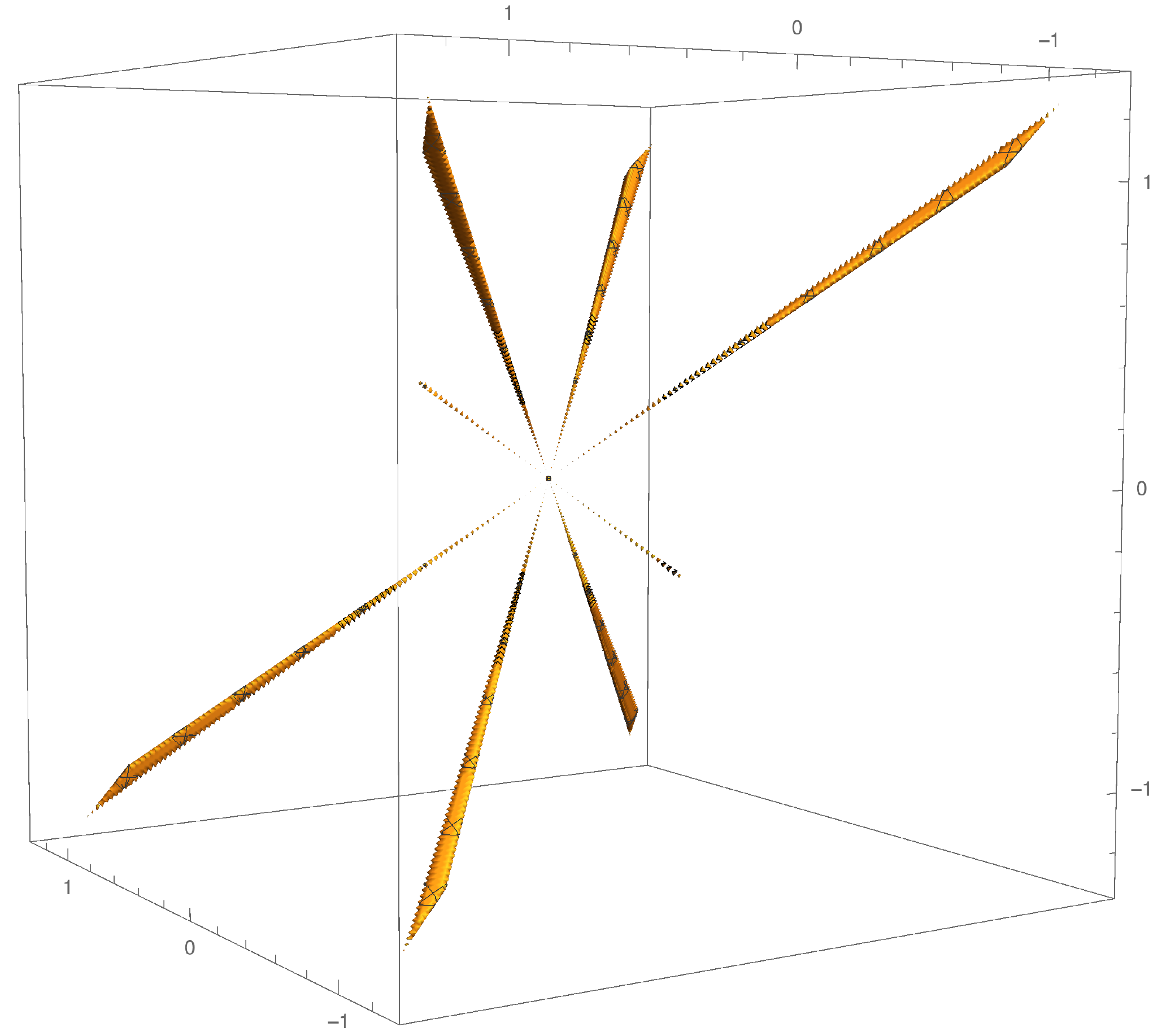}
    \caption{Plot of the set $\genericquadopen$ for \Cref{prop:4quadexample}, defined using 4 quadratic inequalities.}
    \label{fig:counterex4quads}
\end{figure}

The \PDLC{} condition we consider in \Cref{thm:opencase} appears to be quite restrictive, and the reader might wonder how necessary it is. We show next, via an example, that if this condition does not hold then \Cref{thm:opencase} does not necessarily hold. 

\begin{restatable}{myproposition}{threequadnotpdlc}\label{example:3} 
Consider the following set:
$$\genericquadopen = \left\{ x \in \mathbb{R}^3 : [x\quad 1] \begin{bmatrix} A_i & b_i \\ b^{\top}_i & c_i  \end{bmatrix} \begin{bmatrix} x \\ 1 \end{bmatrix} < 0,\, i \in [3] \right\},$$ where, 
\begin{itemize}
\item $A_1 =  \left[ \begin{array}{ccc} 1& 0 & 0\\ 0 & 0 & 0 \\ 0 & 0 & 0 \end{array}\right] $, $b^{\top}_1 = [0 \ 0 \ 0]$, $c_1 = -1 $ 
\item $A_2 = \left[ \begin{array}{ccc} 0 & 0 & 0\\ 0 & 1 & 0 \\ 0 & 0 & 0 \end{array} \right] $, $b^{\top}_2 = [0 \ 0 \ 0]$, $c_2 = -1$ 
\item $A_3 = \left[ \begin{array}{ccc} 0& -1/2 & 0\\ -1/2 & 0 & 0 \\ 0 & 0 & 1 \end{array}\right] $, $b^{\top}_3 = [0 \ 0 \ 0]$, $c_3 = 0$ 
\end{itemize}
In this case,
\begin{itemize}
    \item $\conv(S)\neq \mathbb{R}^3$
    \item \PDLC{} does not hold
\end{itemize}
and additionally, 
\[\conv(S) \subsetneq \bigcap \{ \genericquadopen_\lambda \st \lambda \in \mathbb{R}^3_+,\, \genericquadopen_\lambda \supseteq \conv(\genericquadopen) \}.\]
In particular, \(\conv(S) \neq \bigcap_{\lambda\in \Omega} \genericquadopen_\lambda\).
\end{restatable}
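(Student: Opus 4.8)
The first two assertions are routine. For $\conv(S)\neq\mathbb{R}^3$: every $x\in S$ satisfies $x_1^2<1$, $x_2^2<1$, and $x_3^2<x_1x_2=|x_1||x_2|<1$, so $S\subseteq(-1,1)^3$ and hence $\conv(S)\subseteq(-1,1)^3$ is bounded. For the failure of \PDLC{}: a direct computation gives
\[\sum_{i=1}^3\theta_i\begin{bmatrix}A_i&b_i\\b_i^\top&c_i\end{bmatrix}=\begin{bmatrix}\theta_1&-\theta_3/2&0&0\\-\theta_3/2&\theta_2&0&0\\0&0&\theta_3&0\\0&0&0&-\theta_1-\theta_2\end{bmatrix},\]
and positive definiteness would force $\theta_1,\theta_2>0$ (from the leading $2\times2$ block) together with $\theta_1+\theta_2<0$ (from the last diagonal entry), which is impossible.

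For the remaining two claims it suffices to exhibit a point $\hat x\notin\conv(S)$ that lies in $S_\lambda$ for \emph{every} $\lambda\in\mathbb{R}^3_+$ with $S_\lambda\supseteq\conv(S)$: since $\Omega$ is contained in this set of weights and $\conv(S)\subseteq\bigcap\{S_\lambda:\lambda\in\mathbb{R}^3_+,\,S_\lambda\supseteq\conv(S)\}$ trivially, this yields both statements. I will take $\hat x=(\tfrac12,-\tfrac12,0)$. To see $\hat x\notin\conv(S)$, observe that $x_3^2<x_1x_2$ forces $x_1$ and $x_2$ to share a sign, so $S=S^+\sqcup S^-$ with $S^+:=S\cap\{x_1>0\}$ (where also $x_2>0$) and $S^-:=S\cap\{x_1<0\}=-S^+$. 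Each of $S^\pm$ is convex: $S^+=\{x_1<1\}\cap\{x_2<1\}\cap\{x_1>0,\ x_3^2/x_1<x_2\}$ is the intersection of two halfspaces with the strict epigraph of the jointly convex perspective map $(x_1,x_3)\mapsto x_3^2/x_1$ on $\{x_1>0\}$, and $S^-$ is its reflection. Hence $\conv(S)=\{tp+(1-t)q:p\in S^+,q\in S^-,t\in[0,1]\}$. If $\hat x=tp+(1-t)q$ for such $p,q,t$, then the first coordinate gives $\tfrac12=tp_1+(1-t)q_1<t$ (using $p_1<1$, $q_1<0$), so $t>\tfrac12$, while the second gives $-\tfrac12=tp_2+(1-t)q_2>t-1$ (using $p_2>0$, $q_2>-1$), so $t<\tfrac12$ — a contradiction, and therefore $\hat x\notin\conv(S)$.

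The heart of the argument, and the step I expect to be the main obstacle, is controlling \emph{all} valid weights $\lambda$. The key fact is that $\conv(S)$ bulges out of $\overline S$ along the $x_3$-axis: whenever $|x_3|<s<1$, the point $(0,0,x_3)$ is the midpoint of $(s,s,x_3)\in S^+$ and $(-s,-s,x_3)\in S^-$, so $(0,0,x_3)\in\conv(S)$ for all $|x_3|<1$ and thus $(0,0,1)\in\overline{\conv(S)}$ — even though $(0,0,1)$ violates the third defining inequality of $S$. Now fix $\lambda\in\mathbb{R}^3_+$ with $S_\lambda\supseteq\conv(S)$ and write $q_\lambda(x):=\sum_i\lambda_i(x^\top A_ix+c_i)$, so that $S_\lambda=\{q_\lambda<0\}$; note $\lambda\neq 0$, since $S_0=\emptyset\not\supseteq\conv(S)$. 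Because $q_\lambda<0$ on $\conv(S)$ and $(0,0,1)\in\overline{\conv(S)}$, continuity forces $q_\lambda(0,0,1)=\lambda_3-\lambda_1-\lambda_2\le 0$, and with $\lambda\neq 0$ nonnegative this gives $\lambda_1+\lambda_2>0$. Then $q_\lambda(\hat x)=-\tfrac34\lambda_1-\tfrac34\lambda_2+\tfrac14\lambda_3\le-\tfrac34(\lambda_1+\lambda_2)+\tfrac14(\lambda_1+\lambda_2)=-\tfrac12(\lambda_1+\lambda_2)<0$, so $\hat x\in S_\lambda$. The only parts requiring genuine care are the convexity of $S^\pm$ (hence the explicit description of $\conv(S)$ used above) and the verification that $(0,0,1)\in\overline{\conv(S)}$; once these are in place, the contradiction in the description of $\conv(S)$ and the final estimate are immediate arithmetic.
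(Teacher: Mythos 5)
Your proof is correct, and while it follows the same overall strategy as the paper's --- exhibit a single point that must belong to every valid aggregation $\genericquadopen_\lambda$ yet lies outside $\conv(\genericquadopen)$, using a point of $\conv(\genericquadopen)$ near $(0,0,1)$ to constrain $\lambda$ --- the two sub-arguments where the real work happens are genuinely different. For non-membership, the paper certifies $\left(-\tfrac12,\tfrac12,\tfrac12\right)\notin\conv(\genericquadopen)$ by producing the valid inequality $-x_1+x_2+x_3\le 1.25$, justified by computing the extreme points of the slice polytopes $\conv\left(\genericquadopen\cap\{x_3=z\}\right)$ and maximizing over $|z|\le 1$; you instead observe that $\genericquadopen$ is the disjoint union of the convex set $\genericquadopen\cap\{x_1>0\}$ (convexity via the perspective function $x_3^2/x_1$) and its reflection through the origin, so $\conv(\genericquadopen)$ has an exact two-point description from which $\left(\tfrac12,-\tfrac12,0\right)\notin\conv(\genericquadopen)$ falls out in two lines. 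For the constraint on $\lambda$, the paper uses the interior point $(0,0,1-\epsilon)$ with the specific choice $\epsilon=\tfrac18$ and derives the contradiction $\lambda_3<0$ from a small linear system; you pass to the closure point $(0,0,1)$, obtain the clean inequality $\lambda_3\le\lambda_1+\lambda_2$ by continuity of the aggregated quadratic $q_\lambda$, and finish with the single uniform estimate $q_\lambda(\hat x)\le-\tfrac12(\lambda_1+\lambda_2)<0$. Your route avoids the slice-polytope computation and the $\epsilon$-bookkeeping, and the structural observation that $\genericquadopen$ splits into two convex components is arguably more illuminating about why aggregations fail here; the paper's route is more elementary in that it only manipulates explicit linear inequalities. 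Both arguments are complete.
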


\Cref{fig:counterexPDLC} illustrates the set considered in \Cref{example:3}, and a proof of this result is provided in \Cref{sec:3quadNotPDLC}

\begin{figure}
    \centering
    \includegraphics[scale=0.25]{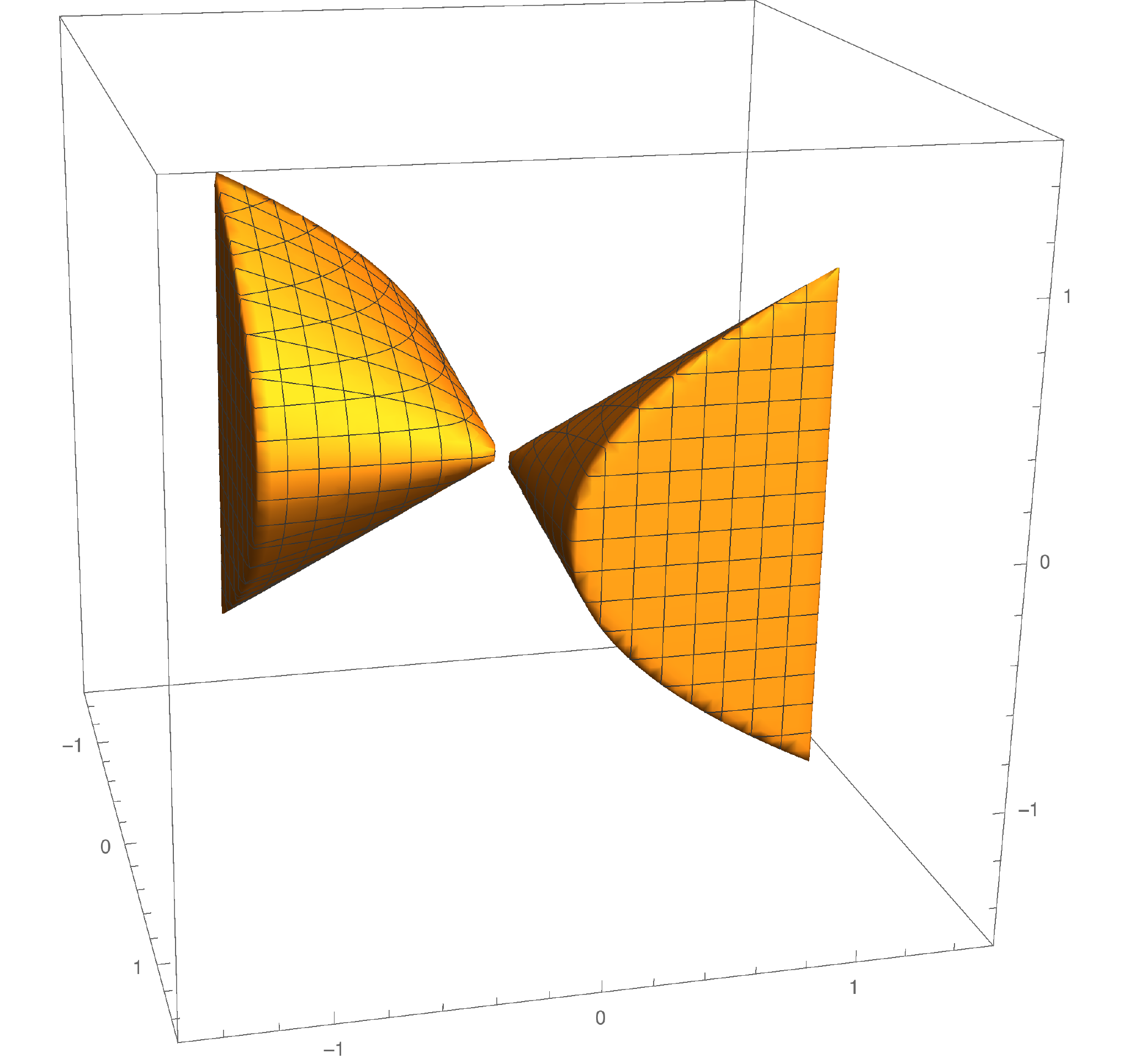}
    \caption{Plot of the set $\genericquadopen$ for \Cref{example:3}. In this example, the \PDLC{} condition does not hold, and the convex hull is not obtained via aggregations.}
    \label{fig:counterexPDLC}
\end{figure}

\subsection{The convex hull of three quadratic constraints: closed case}
In the mathematical programming literature, we usually work with closed sets and constraints defined by inequalities rather than strict inequalities. Therefore, it would be nice to have a version of \Cref{thm:opencase} where we examine a set described by three quadratic (non-strict) inequalities. Here we pursue a direction closely related to that of Modaresi and Vielma~\cite{Modaresi2017} in order to obtain the following result.
\begin{restatable}{theorem}{thmclosed}\label{thm:closedcase}
Let $n \geq 3$ and let
\[\genericquadclosed = \left\{ x \in \mathbb{R}^n : [x\quad 1] \begin{bmatrix} A_i & b_i \\ b^{\top}_i & c_i  \end{bmatrix} \begin{bmatrix} x \\ 1 \end{bmatrix} \leq 0, \ i \in [3] \right\} . \] 
Assume 
\begin{itemize}
\item (Positive definite linear combination, or \PDLC{}) There exists $\theta\in \mathbb{R}^3$ such that 
\[\sum_{i=1}^3 \theta_i \begin{bmatrix} A_i & b_i \\ b^{\top}_i & c_i  \end{bmatrix} \succ 0.\]
\item (Non-trivial convex hull) $\textup{conv}(\genericquadclosed)\neq \mathbb{R}^n.$
\item (No low-dimensional components) $\genericquadclosed \subseteq \overline{\inte(\genericquadclosed)}$.
\end{itemize}
Let
\[\Omega' \coloneqq \{\lambda \in \mathbb{R}^3_+\st \genericquadclosed_\lambda \supseteq \conv(\genericquadclosed) \textup{ and } \nu(\genericquadclosed_\lambda) \leq 1 \}, \]
where $\genericquadclosed_\lambda = \left\{ x\in \mathbb{R}^n: [x\quad 1] \left(\sum_{i =1}^3 \lambda_i\begin{bmatrix} A_i & b_i \\ b^{\top}_i & c_i  \end{bmatrix}\right) \begin{bmatrix} x \\ 1 \end{bmatrix} \leq 0 \right\}$
and $\nu(\genericquadclosed_\lambda)$ is the number of negative eigenvalues of $\sum_{i=1}^3 \lambda_i A_i$.
Then
\[\clconv(\genericquadclosed) = \bigcap_{\lambda\in \Omega'} \genericquadclosed_\lambda.\]
\end{restatable}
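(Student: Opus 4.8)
The inclusion $\clconv(\genericquadclosed)\subseteq\bigcap_{\lambda\in\Omega'}\genericquadclosed_\lambda$ is immediate: each $\genericquadclosed_\lambda$ with $\lambda\in\Omega'$ is a closed set containing $\conv(\genericquadclosed)$, hence it contains $\clconv(\genericquadclosed)$. The entire content of the theorem is the reverse inclusion, and the plan is to obtain it from the already established open case (\Cref{thm:opencase}) by passing to the strict version of $\genericquadclosed$ and then taking closures; the no-low-dimensional-components hypothesis is exactly what makes the two closure operations involved compatible, in the spirit of Modaresi and Vielma~\cite{Modaresi2017}.

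\emph{Reduction to the open case.} Let $\genericquadopen$ be obtained from $\genericquadclosed$ by replacing every ``$\leq$'' with ``$<$'' (and by discarding any trivial constraint $\bigl[\begin{smallmatrix}A_i&b_i\\b_i^\top&c_i\end{smallmatrix}\bigr]=0$, which changes neither $\genericquadclosed$, nor $\clconv(\genericquadclosed)$, nor the \PDLC{} hypothesis). First I would show $\cl(\genericquadopen)=\genericquadclosed$: ``$\subseteq$'' is clear, and for ``$\supseteq$'' a point of $\genericquadclosed$ is a limit of points of $\inte(\genericquadclosed)$ by the no-low-dimensional-components hypothesis, while a point $y\in\inte(\genericquadclosed)$ is a limit of points of $\genericquadopen$ because on a full-dimensional neighborhood of $y$ all quadratics are $\leq 0$ and only finitely many proper affine subspaces through $y$ can make one of the (not identically zero) quadratics vanish, so $y$ can be perturbed to make every one of them strictly negative. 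Hence $\clconv(\genericquadclosed)=\clconv(\genericquadopen)$. Since $\genericquadopen$ inherits \PDLC{} with the same $\theta$ and $\conv(\genericquadopen)\subseteq\clconv(\genericquadclosed)\neq\RR^n$, \Cref{thm:opencase} applies to $\genericquadopen$ and yields $\conv(\genericquadopen)=\bigcap_{\lambda\in\Omega}\genericquadopen_\lambda$ with $\Omega$ the index set of that theorem. Writing $q_\lambda$ for the aggregated quadratic, for $\lambda\in\Omega$ one has $\{q_\lambda<0\}\supseteq\conv(\genericquadopen)\neq\emptyset$, so $q_\lambda$ is non-constant with a nonempty strict sublevel set and therefore $\cl(\genericquadopen_\lambda)=\genericquadclosed_\lambda$; using $\clconv(\genericquadopen)=\clconv(\genericquadclosed)$ one also checks that $\Omega\subseteq\Omega'$ and that every $\lambda\in\Omega'\setminus\Omega$ satisfies $q_\lambda\leq 0$ on all of $\RR^n$, hence $\genericquadclosed_\lambda=\RR^n$. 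Consequently $\bigcap_{\lambda\in\Omega'}\genericquadclosed_\lambda=\bigcap_{\lambda\in\Omega}\genericquadclosed_\lambda=\bigcap_{\lambda\in\Omega}\cl(\genericquadopen_\lambda)$, and the theorem reduces to the identity
\[
\bigcap_{\lambda\in\Omega}\cl(\genericquadopen_\lambda)=\cl\Bigl(\,\bigcap_{\lambda\in\Omega}\genericquadopen_\lambda\Bigr),
\]
i.e.\ to commuting the closure with a (possibly infinite) intersection of aggregated sublevel sets.

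\emph{Effecting the interchange and the main obstacle.} The inclusion ``$\supseteq$'' above is automatic, so the task is to show that every point of $\bigcap_{\lambda\in\Omega}\cl(\genericquadopen_\lambda)$ lies in $\cl(\conv(\genericquadopen))=\clconv(\genericquadclosed)$. I would normalize the weights and work with $F(x):=\sup\{q_\lambda(x)\st\lambda\in\Omega,\ \|\lambda\|=1\}$, so that $\bigcap_{\lambda\in\Omega}\genericquadclosed_\lambda=\{F\leq 0\}$ and, by \Cref{thm:opencase}, $\conv(\genericquadopen)=\{F<0\}$; the goal becomes $\{F\leq 0\}=\cl\{F<0\}$, equivalently that $F$ does not vanish at any point of $\RR^n\setminus\clconv(\genericquadclosed)$. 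Since $\Omega$ need not be closed, $F$ is a priori only lower semicontinuous, and I would handle this by replacing $\Omega$ by its closure --- the extra limit weights only enlarge some $\cl(\genericquadopen_\lambda)$ to $\RR^n$, so they do not change $\bigcap_\lambda\cl(\genericquadopen_\lambda)$, and they do not shrink $\{F<0\}$ below $\conv(\genericquadopen)$ because they render $q_\lambda$ globally $\leq 0$ --- after which $F$ is continuous. The crux, and the step where $\genericquadclosed\subseteq\overline{\inte(\genericquadclosed)}$ is genuinely used, is then to rule out a point $w\notin\clconv(\genericquadclosed)$ with $F(w)=0$: such a $w$ would be a local minimizer of $F$ of value $0$, so $F$ would vanish identically on a relatively open subset of $\RR^n\setminus\clconv(\genericquadclosed)$, at each point of which some normalized $\lambda$ achieves $q_\lambda=0$ while all the $q_\lambda$ stay $\leq 0$; this forces the corresponding aggregated quadratics to be negatives of squares of affine forms whose zero hyperplanes miss $\conv(\genericquadopen)$, and I expect to contradict the regularity hypothesis by showing that such a family of aggregations would expose a lower-dimensional portion of $\partial\clconv(\genericquadclosed)$, hence of $\partial\genericquadclosed$. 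This last point is where I expect the real difficulty to lie. Finally, the degenerate configurations set aside above --- $\genericquadopen=\emptyset$, $\clconv(\genericquadclosed)$ not full-dimensional, or a discarded trivial constraint --- must be treated separately, but in each of them $\genericquadclosed$ is empty or the statement collapses to something directly verifiable.
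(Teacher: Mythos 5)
Your setup is sound and the first half of your plan parallels the paper's: reduce to \Cref{thm:opencase} applied to the strict version of $\genericquadclosed$, and use the no-low-dimensional-components hypothesis to relate the strict and non-strict sets. The genuine gap is in the second half. You correctly isolate the identity $\bigcap_{\lambda}\cl(\genericquadopen_\lambda)=\cl\bigl(\bigcap_{\lambda}\genericquadopen_\lambda\bigr)$ as ``the real difficulty,'' but your argument for it does not go through. After passing to $F(x)=\sup_\lambda q_\lambda(x)$, you assert that a point $w\notin\clconv(\genericquadclosed)$ with $F(w)=0$ and $F\geq 0$ near $w$ would force $F$ to vanish identically on an open set; this does not follow (a continuous function can have an isolated local minimum of value $0$, e.g.\ $\|x-w\|^2$), and the subsequent claim that the active aggregated quadratics must be negatives of squares of affine forms is likewise unjustified. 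The conclusion is then explicitly conjectural (``I expect to contradict the regularity hypothesis''), so the crux of the theorem is left unproved.

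The paper sidesteps this closure-commutation problem entirely. It applies \Cref{thm:opencase} to $\inte(\genericquadclosed)$ to obtain $\conv(\inte(\genericquadclosed))=\bigcap_{\lambda}\inte(\genericquadclosed_\lambda)$ over a suitable index set, observes that this set is open so that the elementary \Cref{prop:interiors} rewrites the right-hand side as $\inte\bigl(\bigcap_{\lambda}\genericquadclosed_\lambda\bigr)$ (commuting the \emph{interior}, not the closure, with the infinite intersection, which only requires the intersection of the interiors to be open), and then invokes the Modaresi--Vielma result (\Cref{lemma:jp}) with $A=\genericquadclosed$ and $B=\bigcap_{\lambda}\genericquadclosed_\lambda$: since $A\subseteq\overline{\inte(A)}$, $B$ is closed and convex, and $\conv(\inte(A))=\inte(B)$, one concludes $\clconv(\genericquadclosed)=B$ directly. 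A final routine argument replaces the auxiliary index set by $\Omega'$. If you want to salvage your route, the cleanest fix is to replace your closure-interchange step by exactly this pair of lemmas rather than attacking $\{F\leq 0\}=\cl\{F<0\}$ head on.
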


At a high level, our proof of \Cref{thm:closedcase} is similar to the proof presented in~\cite{Modaresi2017} for the case of two quadratic constraints. The main difference in the proofs comes from the fact that we do not know $|\Omega|$  is finite (unlike in the case of two quadratic constraints). See \Cref{sec:proofclosedcase} for a proof of \Cref{thm:closedcase}.  

\begin{example}\label{example:1-2}
We note that the `closed' version of \Cref{example:1} serves as an example for the application of \Cref{thm:closedcase}. 

Moreover, we can use this case to argue why our result is of a different nature compared to that of SDP tightness results, such as \cite{wang2021tightness}. We do so by arguing that in this case, even if aggregations yield the convex hull, an SDP may not provide tight bounds.
For example, if we maximize $x_1$ over the resulting closed set $\genericquadclosed$ the optimal value is $< 0$, while the SDP bound is $>0$. We provide detailed derivations of these values (optimal solution value and SDP bound) in \Cref{sec:SDPtightness}.
\end{example}

\subsection{On restricting to a finite subset of $\Omega$}\label{sec:open}
An important open question is to determine if we actually require an infinite number of aggregations to obtain the convex hull in \Cref{thm:opencase,thm:closedcase}. In all our experiments, we have not discovered an example that satisfies the \PDLC{} condition and for which the convex hull is obtained using an infinite intersection.

However, if we drop some of these conditions, namely \PDLC{} and the $n \geq 3$ condition, we can have that the convex hull of three quadratic inequalities is the intersection of an \emph{infinite} number of aggregated constraints. In this case, however, the aggregations provided by $\lambda$ may not satisfy $\nu(\genericquadopen_{\lambda}) \leq 1$. 

\begin{restatable}{myproposition}{infiniteaggregations}\label{example:4}
Consider the set 
$$\genericquadopen:= \{ x \in \mathbb{R}^2:  x_1^2 \leq 1,\  x_2^2 \leq 1, \ (x_ 1 - 1)^2 + (x_2 - 1)^2 \geq 1\},$$
and let $\Omega^+ := \{ \lambda \in \mathbb{R}^3_{+}: \genericquadopen_{\lambda} \supseteq \textup{conv}(\genericquadopen)\}$. It holds that
\[\conv(\genericquadopen) = \bigcap_{\lambda \in \Omega^+} \genericquadopen_{\lambda}.\] 
Moreover, \(\conv(\genericquadopen) \subsetneq \bigcap_{\lambda \in \tilde{\Omega}^+} \genericquadopen_{\lambda}\) for any $\tilde{\Omega}^+ \subseteq \Omega^+$ which is finite.
%
\end{restatable}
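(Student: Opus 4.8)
The plan is to work out $\conv(\genericquadopen)$ explicitly, then identify $\Omega^+$, and finally show that the intersection over all of $\Omega^+$ recovers the convex hull while no finite subfamily does. The set $\genericquadopen = \{x\in\RR^2 : -1\le x_1\le 1,\ -1\le x_2\le 1,\ (x_1-1)^2+(x_2-1)^2\ge 1\}$ is the unit square with the closed quarter-disk of radius $1$ centered at $(1,1)$ removed. Its convex hull is therefore the square minus the open region ``above'' the circular arc joining $(0,1)$ and $(1,0)$; concretely, $\conv(\genericquadopen)$ is the set of points in $[-1,1]^2$ lying on the lower-left side of every supporting line of that arc, i.e.\ $\conv(\genericquadopen) = \{x\in[-1,1]^2 : \cos\alpha\,(x_1-1) + \sin\alpha\,(x_2-1) \le -1 \ \ \forall \alpha\in[0,\pi/2]\}$, where for each $\alpha$ the tangent line to the circle at the point $(1-\cos\alpha,\,1-\sin\alpha)$ gives one facet. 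The first step is to verify this description carefully (the arc is the only nonconvexity, and replacing it by its chord / supporting lines is exactly the convexification).

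The second step is to realize each supporting line of the arc as an aggregation. Writing the three defining quadratics as $q_1(x)=x_1^2-1\le 0$, $q_2(x)=x_2^2-1\le 0$, $q_3(x)=(x_1-1)^2+(x_2-1)^2-1 \le 0$ — note the reversed sign: $\genericquadopen$ uses $(x_1-1)^2+(x_2-1)^2\ge 1$, so the constraint is $-q_3(x)\le 0$, i.e.\ $1-(x_1-1)^2-(x_2-1)^2\le 0$ — for $\lambda=(\lambda_1,\lambda_2,\lambda_3)\in\RR^3_+$ the aggregated quadratic is $\lambda_1(x_1^2-1)+\lambda_2(x_2^2-1)+\lambda_3(1-(x_1-1)^2-(x_2-1)^2)\le 0$. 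Choosing $\lambda_1=\lambda_2=\lambda_3=t\ge 0$ makes the quadratic terms cancel, leaving a \emph{linear} inequality $2t(x_1-1)+2t(x_2-1)+t\le 0$; more generally, perturbing $\lambda_1,\lambda_2$ away from $\lambda_3$ lets one tilt this line, and one checks that as the parameters range over $\RR^3_+$ the family of aggregated constraints that contain $\conv(\genericquadopen)$ includes, in particular, every tangent line to the arc (each such tangent is valid for $\genericquadopen$ because it is valid for the square and for the complement of the disk separately, hence valid for $\conv(\genericquadopen)$, hence the corresponding $\lambda\in\Omega^+$). Combined with the (degenerate) aggregations recovering $x_1\le 1,\ x_1\ge -1,\ x_2\le 1,\ x_2\ge -1$, this shows $\bigcap_{\lambda\in\Omega^+}\genericquadopen_\lambda \subseteq \conv(\genericquadopen)$; the reverse inclusion $\supseteq$ is immediate since every $\lambda\in\Omega^+$ satisfies $\genericquadopen_\lambda\supseteq\conv(\genericquadopen)$ by definition.

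The third step is the finiteness obstruction. The boundary of $\conv(\genericquadopen)$ contains the full circular arc $\{(1-\cos\alpha,1-\sin\alpha):\alpha\in[0,\pi/2]\}$ as an exposed curved piece, and a curved boundary arc cannot be cut out by finitely many quadratic inequalities each of which is satisfied with equality only on a lower-dimensional set unless one of those quadratics vanishes on the whole arc — but a nonzero quadratic in $\RR^2$ vanishes on at most a one-dimensional variety, and the only conic through that arc is the circle itself, whose corresponding constraint $q_3$ is \emph{not} valid for $\conv(\genericquadopen)$ (it is the reversed inequality). Hence for any finite $\tilde\Omega^+\subseteq\Omega^+$, each $\genericquadopen_\lambda$ with $\lambda\in\tilde\Omega^+$ touches the arc in at most finitely many points (or not at all), so $\bigcap_{\lambda\in\tilde\Omega^+}\genericquadopen_\lambda$ strictly contains $\conv(\genericquadopen)$ near any arc point not among those finitely many contact points. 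Making this rigorous is the main obstacle: I would argue that if $\lambda\in\Omega^+$ and $\genericquadopen_\lambda$ is not a half-plane, then its boundary is a conic meeting the arc in finitely many points, and if it is a half-plane then its boundary line is a supporting line of the arc meeting it in one point (or an edge of the square), so a finite union of such boundaries covers only finitely many arc points; picking any other arc point $p$, one shows a short outward-normal segment from $p$ lies in every $\genericquadopen_\lambda$, $\lambda\in\tilde\Omega^+$ (using that $p$ is in the interior of each such $\genericquadopen_\lambda$), yet leaves $\conv(\genericquadopen)$, giving the strict inclusion.
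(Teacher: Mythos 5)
Your proof breaks at the very first step: the description of $\conv(\genericquadopen)$ is incorrect, and both subsequent steps inherit the error. The removed quarter-disk sits at the corner $(1,1)$ and its bounding arc $\{(1-\cos\alpha,1-\sin\alpha):\alpha\in[0,\pi/2]\}$ bulges \emph{away} from that corner, into the square: every interior arc point satisfies $x_1+x_2=2-\cos\alpha-\sin\alpha<1$. Consequently the triangle with vertices $(0,1)$, $(1,0)$, $(1,1)$ is contained in the closed disk and meets the circle only at $(0,1)$ and $(1,0)$, so $\genericquadopen\subseteq\{x: x_1+x_2\le 1\}$ and in fact $\conv(\genericquadopen)=[-1,1]^2\cap\{x: x_1+x_2\le 1\}$, a \emph{polytope}; the arc (minus its endpoints) lies in the interior of $\conv(\genericquadopen)$, not on its boundary. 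Your proposed description via supporting lines of the arc actually cuts off points of $\genericquadopen$ itself: for $\alpha=\pi/4$ the half-plane $\cos\alpha\,(x_1-1)+\sin\alpha\,(x_2-1)\le -1$ reads $x_1+x_2\le 2-\sqrt{2}$, which excludes $(0,1)\in\genericquadopen$. For the same reason those tangent half-planes are not valid for $\conv(\genericquadopen)$, so the corresponding weights are not in $\Omega^+$ and Step 2 does not produce the claimed members of $\Omega^+$; note also that your equal-weight aggregation yields $x_1+x_2\le 3/2$, not the tight chord inequality.

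Step 3 is then an argument about the wrong obstruction. The boundary of $\conv(\genericquadopen)$ has no curved piece; the difficulty is dual to the one you describe. The flat facet $\{x: x_1+x_2=1\}\cap[-1,1]^2$ must be generated by aggregations, and (as the paper shows) any $\lambda\in\Omega^+$ whose aggregated inequality is tight at a point $(a,1-a)$ of that facet necessarily has $\lambda_1<\lambda_3$ and $\lambda_2<\lambda_3$, so $\genericquadopen_\lambda$ is the complement of an ellipse and can touch the segment at only that single point. The paper's proof exhibits the explicit family $\lambda^a=(a^2,(1-a)^2,a^2-a+1)$ for $a\in(0,1)$, each supporting the chord at $(a,1-a)$, verifies validity by a penalization and strict-concavity computation, and concludes that any finite subfamily leaves some chord point untouched, giving the strict inclusion. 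So while your instinct that a ``curved versus flat'' mismatch forces infinitely many aggregations is in the right spirit, the roles of curved and flat are reversed, and the concrete constructions in your three steps do not survive the correction.
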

\Cref{fig:infiniteaggregation} illustrates the set considered in \Cref{example:4}, and the corresponding proof is provided in \Cref{sec:infiniteaggproof}.

\begin{figure}
    \centering
    \includegraphics[scale=0.25]{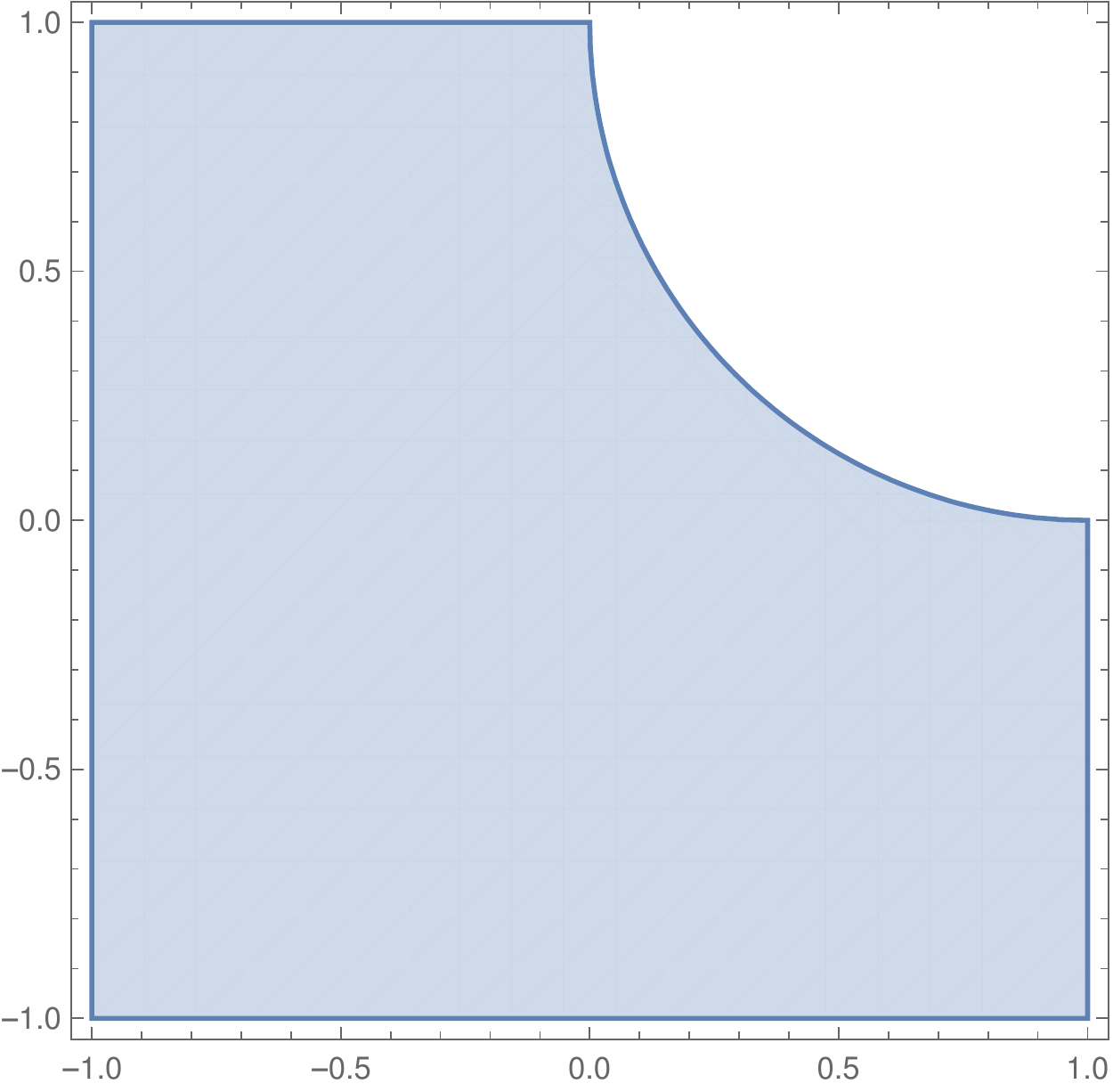}\hspace{.4cm}
    \includegraphics[scale=0.25]{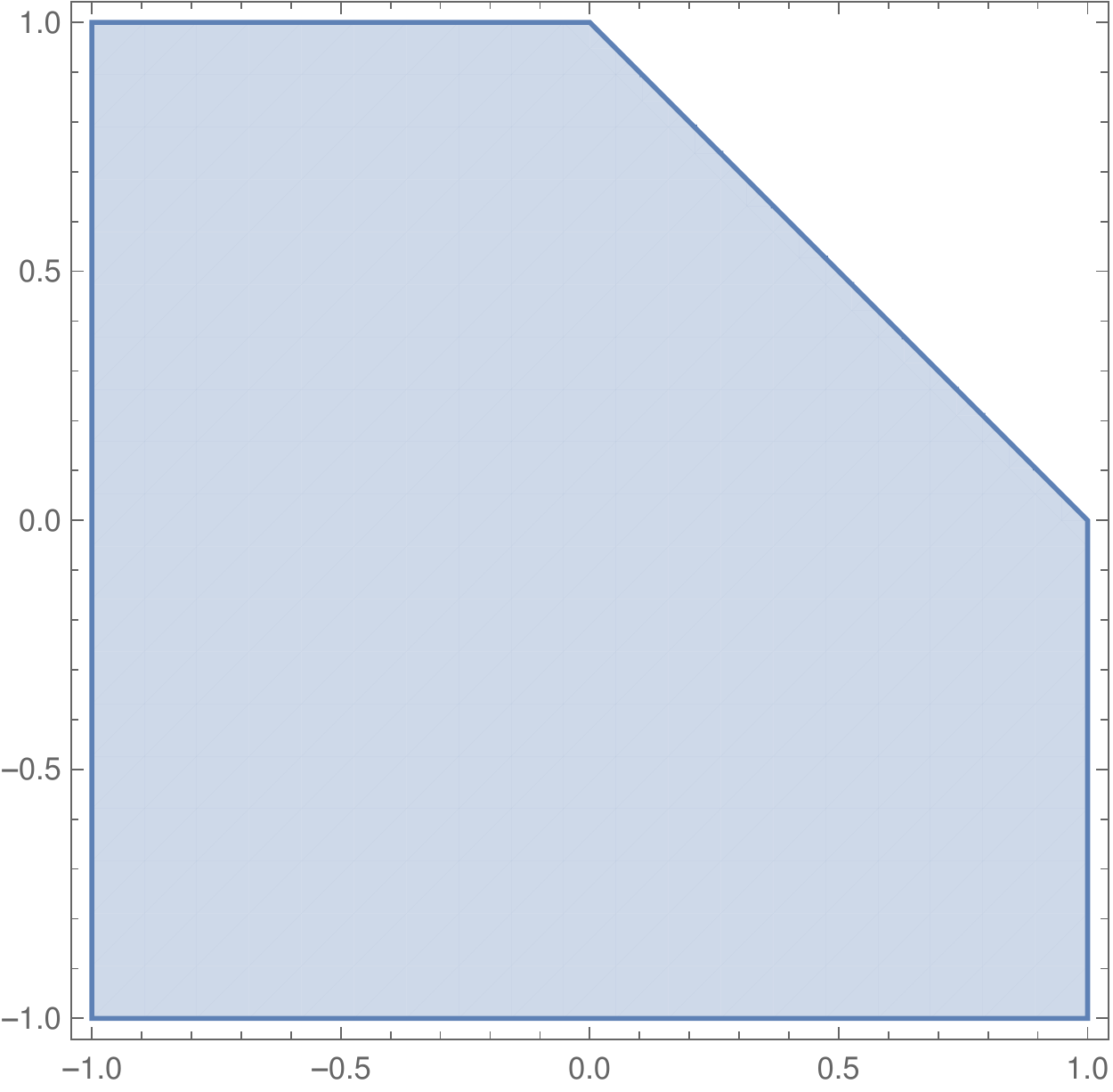}
    \caption{Plots of sets $\genericquadopen$ (left) and $\conv(\genericquadopen)$ (right) for \Cref{example:4}. In this example, the convex hull is obtained via an infinite intersection of aggregations of the three quadratic inequalities describing $\genericquadopen$.}
    \label{fig:infiniteaggregation}
\end{figure}

%


\section{Conclusion and open questions}\label{sec:conclusion}

Our results show that the aggregation approach remains expressive in the case of three quadratic inequalities, in the sense that aggregations can provide the convex hull of a set. This result requires the \PDLC{} condition, which seems very restrictive, but that cannot be completely avoided. We have also shown that it is unlikely to generalize these results to four or more constraints unless further structure is considered.

Our main open question involves the finiteness of the $\Omega$ set in \Cref{thm:opencase,thm:closedcase}: we have not been able to show whether an infinite number of aggregations is actually needed. We show an example where an infinite number of aggregations is required but without the \PDLC{} condition. We have not been able to produce such an example when the \PDLC{} condition holds.

We also leave two more open questions related to our results. Firstly, relaxing/replacing the \emph{No low-dimensional components} condition. This condition in \Cref{thm:closedcase} is not easy to verify, unfortunately. Replacing it or relaxing it would be a great way to strengthen this result.
Secondly, relaxing the \PDLC{} condition. Although we know that if we completely relax this condition, \Cref{thm:opencase} may not hold, we also see there are examples, such as \Cref{example:4}, where the convex hull is obtained via aggregation without the \PDLC{} condition holding. It would be interesting to better understand the role of this condition, although this direction seems 
challenging to pursue.

\section{Symmetric S-lemma for three quadratic constraints}\label{sec:proofslemma}

\subsection{Preliminary results} \label{sec:prelimopt} In this section we rely on two rank reduction results.
The following result is from~\cite{pataki1998rank}.
\begin{theorem}\label{thm:Pataki} If $\mathcal{A} \subseteq \mathbb{S}^n$ is an affine subspace such that the intersection $\mathbb{S}^n_+ \cap \mathcal{A}$ is non-empty and $\textup{dim}(\mathcal{A}) \geq {n+1 \choose 2 } - {r+2 \choose 2} +1$, then there is a matrix $X \in \mathbb{S}^n_+ \cap \mathcal{A}$ such that $\textup{rank}(X) \leq r$.
\end{theorem}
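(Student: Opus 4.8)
The plan is a standard minimal-rank (rank-reduction) argument. Since $\textup{rank}$ takes only the finitely many values $0,1,\dots,n$ on the nonempty set $\mathbb{S}^n_+\cap\mathcal{A}$, I would first pick $X\in\mathbb{S}^n_+\cap\mathcal{A}$ of \emph{minimum} rank, say $\textup{rank}(X)=k$, and then show $k\le r$ by contradiction, so assume $k\ge r+1$.

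Write $X=VV^\top$ with $V\in\mathbb{R}^{n\times k}$ of full column rank, and set $L:=\{VWV^\top : W\in\mathbb{S}^k\}\subseteq\mathbb{S}^n$; since $W\mapsto VWV^\top$ is injective on $\mathbb{S}^k$ when $V$ has full column rank, $L$ is a linear subspace of dimension $\binom{k+1}{2}$. Let $\mathcal{A}_0$ be the linear subspace parallel to $\mathcal{A}$, so $\mathcal{A}=X+\mathcal{A}_0$ and $\dim\mathcal{A}_0=\dim\mathcal{A}$. Since $\dim\mathbb{S}^n=\binom{n+1}{2}$, the inclusion $L+\mathcal{A}_0\subseteq\mathbb{S}^n$ together with the hypothesis on $\dim\mathcal{A}$ gives
\[
\dim(L\cap\mathcal{A}_0)\;\ge\;\dim L+\dim\mathcal{A}_0-\binom{n+1}{2}\;\ge\;\binom{k+1}{2}-\binom{r+2}{2}+1\;\ge\;1,
\]
where the last step uses $k\ge r+1$, hence $\binom{k+1}{2}\ge\binom{r+2}{2}$. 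So there is a nonzero $M\in L\cap\mathcal{A}_0$, say $M=VWV^\top$ with $W\in\mathbb{S}^k\setminus\{0\}$.

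Finally, consider the line $X(t):=X+tM=V(I+tW)V^\top$, which lies in $\mathcal{A}$ for every $t$ because $M\in\mathcal{A}_0$. For $t$ near $0$ we have $I+tW\succ0$, hence $X(t)\succeq0$. Because $W\ne0$, it has a nonzero eigenvalue; choosing $t^\ast=-1/\mu_{\max}$ if $W$ has a largest eigenvalue $\mu_{\max}>0$, or $t^\ast=-1/\mu_{\min}$ if $W$ has a smallest eigenvalue $\mu_{\min}<0$, one checks directly (by ordering the eigenvalues of $W$) that $I+t^\ast W\succeq0$ but singular. Then $X(t^\ast)\in\mathbb{S}^n_+\cap\mathcal{A}$, and since $V$ has full column rank, $\textup{rank}(X(t^\ast))=\textup{rank}(I+t^\ast W)<k$, contradicting the minimality of $k$. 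Therefore $k\le r$ and $X$ is the desired matrix.

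I expect the only mildly delicate point to be this last step: verifying that one can slide along the line $X(t)$ while remaining in $\mathbb{S}^n_+$ all the way to the parameter value where the rank strictly drops — i.e. that the first eigenvalue of $I+tW$ to reach $0$ does so while the others stay nonnegative. This is immediate once the eigenvalues of $W$ are sorted, but it is the one place where positive semidefiniteness (rather than a pure dimension count) enters; the factorization $X=VV^\top$ and the dimension computation are routine.
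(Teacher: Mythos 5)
The paper does not prove this statement at all: it is quoted verbatim as a known result of Pataki~\cite{pataki1998rank}, so there is no in-paper proof to compare against. Your argument is the standard self-contained proof of the Barvinok--Pataki rank bound and it is correct: the minimal-rank choice is legitimate (rank takes finitely many values on a non-empty set), the subspace $L=\{VWV^{\top}: W\in\mathbb{S}^k\}$ has dimension $\binom{k+1}{2}$ by injectivity of $W\mapsto VWV^{\top}$ for full-column-rank $V$, the dimension count $\dim(L\cap\mathcal{A}_0)\ge \binom{k+1}{2}-\binom{r+2}{2}+1\ge 1$ under $k\ge r+1$ is right, and the eigenvalue bookkeeping at $t^{\ast}$ (scaling to kill the extreme eigenvalue of $W$ of the appropriate sign while keeping $I+t^{\ast}W\succeq 0$) together with $\operatorname{rank}\bigl(V(I+t^{\ast}W)V^{\top}\bigr)=\operatorname{rank}(I+t^{\ast}W)$ yields the contradiction. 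The one stylistic difference worth noting is that Pataki's original argument is phrased via the facial structure of $\mathbb{S}^n_+$ (an extreme point of $\mathbb{S}^n_+\cap\mathcal{A}$ of rank $k$ forces $\binom{k+1}{2}\le\operatorname{codim}\mathcal{A}$), whereas your perturbation-along-a-line version proves the same bound more elementarily and is the form usually attributed to Barvinok; for the purposes of this paper either route suffices, since only the statement is used.
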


The following result is from~\cite{barvinok2001remark}.
\begin{theorem}\label{thm:Barvinok} If $\mathcal{A} \subseteq \mathbb{S}^n$ is an affine subspace such that the intersection $\mathbb{S}^n_+ \cap \mathcal{A}$ is non-empty, bounded and $\textup{dim}(\mathcal{A}) \geq {n+1\choose2} - {r+2\choose 2}$ then there is a matrix $X \in \mathbb{S}^n_+ \cap \mathcal{A}$ such that $\textup{rank}(X) \leq r$.
\end{theorem}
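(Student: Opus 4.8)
The plan is to prove \Cref{thm:Barvinok} by a rank-reduction argument built on the facial structure of $\mathbb{S}^n_+$, using Pataki's result (\Cref{thm:Pataki}) as the engine and extracting from boundedness the extra unit of dimension that separates the two hypotheses. Write $\mathcal A = X_0 + \mathcal L$ with $\mathcal L \subseteq \mathbb{S}^n$ the linear direction space, so $\dim \mathcal L = \dim \mathcal A$ and $K := \mathbb{S}^n_+ \cap \mathcal A$ is nonempty, compact, and convex. First I would record what boundedness buys. The recession cone of $K$ equals $\mathbb{S}^n_+ \cap \mathcal L$, so $K$ bounded is equivalent to $\mathcal L \cap \mathbb{S}^n_+ = \{0\}$: the direction space contains no nonzero positive semidefinite matrix. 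Separating the compact base $\{X \succeq 0 : \operatorname{tr} X = 1\}$ from the subspace $\mathcal L$ then produces a certificate $W \succ 0$ with $\langle W, H\rangle = 0$ for all $H \in \mathcal L$; equivalently $\langle W, \cdot\rangle$ is constant on $\mathcal A$, say $\equiv \beta$. Applying the congruence $X \mapsto W^{1/2} X W^{1/2}$, which preserves positive semidefiniteness, rank, and affine subspaces, I may assume $W = I$, so that $\operatorname{tr}(X) = \beta$ on $\mathcal A$ and in particular $\mathcal L \subseteq \ker(\operatorname{tr})$. This normalization is the concrete form in which boundedness will be used.

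Next comes the facial reduction, which already recovers Pataki's bound without boundedness. Choose $X^\star \in K$ of minimum rank $k$, let $U = \operatorname{range}(X^\star)$, and let $V = \{H \in \mathbb{S}^n : \operatorname{range}(H)\subseteq U\}$ be the span of the minimal face of $\mathbb{S}^n_+$ containing $X^\star$, so that $\dim V = \binom{k+1}{2}$ and $X^\star$ lies in the relative interior of that face. If there were a nonzero $H \in \mathcal L \cap V$, then moving along $X^\star + tH$ would stay in $\mathcal A$ and, since $X^\star$ is relatively interior to the face, stay in $\mathbb{S}^n_+$ until reaching the relative boundary of the face at a finite $t$, producing a point of $K$ of strictly smaller rank and contradicting minimality. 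Hence $\mathcal L \cap V = \{0\}$, giving $\binom{k+1}{2} \le \binom{n+1}{2} - \dim \mathcal A$. Under the strict (Pataki) hypothesis this already forces $k \le r$.

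The crux is to remove the remaining unit of slack using boundedness, i.e.\ to handle the tight case $\binom{n+1}{2} - \dim \mathcal A = \binom{r+2}{2}$, where the bound above only yields $k \le r+1$. Here I would exploit the normalization from the first step: since $I \notin \ker(\operatorname{tr}) \supseteq \mathcal L$, the enlarged affine space $\mathcal A' := \mathcal A + \mathbb{R}\,I$ has $\dim \mathcal A' = \dim \mathcal A + 1 \ge \binom{n+1}{2} - \binom{r+2}{2} + 1$, which is exactly Pataki's hypothesis; moreover $\mathbb{S}^n_+ \cap \mathcal A' \supseteq K \ne \emptyset$. Applying \Cref{thm:Pataki} to $\mathcal A'$ yields a matrix of rank at most $r$ in $\mathbb{S}^n_+ \cap \mathcal A'$, necessarily of the form $Y + sI$ with $Y \in \mathcal A$. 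It then remains to transport this low-rank matrix back onto the slice $\{\operatorname{tr} = \beta\} \cap \mathcal A' = \mathcal A$ while preserving the rank bound. I would attempt this by a continuity/intermediate-value argument along the one-parameter family of translates $\mathcal A + tI$: boundedness guarantees, via coercivity of $Y \mapsto \lambda_{\min}(Y)$ on $\mathcal A$ (every nonzero $H \in \mathcal L$ has a negative eigenvalue, so $\lambda_{\min}\to-\infty$ along each ray), that the feasible slices vary controllably with $t$, and one then argues that the level $s = 0$ lies in the range of trace values attained by rank-$\le r$ feasible matrices.

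I expect this transport step to be the main obstacle, since the locus of rank-$\le r$ matrices is not convex and one must show it meets the correct trace level; this is precisely where the boundedness hypothesis of \Cref{thm:Barvinok} does essential work beyond merely making $K$ compact. An alternative route to the same improvement would be a second-order analysis at the minimal-rank point $X^\star$, using boundedness to obstruct feasible curves leaving the minimal face: the aim would be to show that the quadratic (Schur-complement) term forces an additional linear relation among the defining constraints, sharpening $\mathcal L \cap V = \{0\}$ to a strict containment $\mathcal L \oplus V \subsetneq \mathbb{S}^n$ and hence $k \le r$. Either way, the boundedness hypothesis is exactly the ingredient that upgrades the Pataki count by one.
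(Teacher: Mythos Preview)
The paper does not prove this statement: \Cref{thm:Barvinok} is quoted as a known result of Barvinok~\cite{barvinok2001remark} and used as a black box in the proof of \Cref{thm:Slemma3}. So there is no ``paper's proof'' to compare against; your proposal is an attempt to supply a proof the paper never intended to give.

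On the merits of the proposal itself, the transport step is a genuine gap, and in fact it collapses rather than merely being incomplete. Suppose you apply \Cref{thm:Pataki} to $\mathcal A' = \mathcal A + \mathbb{R} I$ and obtain $X = Y + sI \in \mathbb{S}^n_+$ with $Y \in \mathcal A$ and $\operatorname{rank}(X) \le r$. If $s > 0$ then $X \succeq sI \succ 0$, forcing $\operatorname{rank}(X) = n$, which is impossible unless $n \le r$ (the trivial case). If $s < 0$ then $Y = X + |s|I$ is positive definite, so $Y \in K$ but $\operatorname{rank}(Y) = n$; the low rank of $X$ tells you nothing about low-rank elements of $K$. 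Thus the only useful output of Pataki on $\mathcal A'$ is $s = 0$, and you have no mechanism to force that. Your suggested intermediate-value argument does not repair this: the set $\{t : \mathbb{S}^n_+ \cap (\mathcal A + tI) \text{ contains a matrix of rank} \le r\}$ is the image of a real algebraic set under a linear map and need not be an interval, and you have identified no continuous scalar to which to apply the intermediate value theorem. The alternative ``second-order / Schur complement'' route you sketch is likewise only a wish; the inequality $\mathcal L \cap V = \{0\}$ already uses up the first-order information at a minimum-rank point, and you would need a concrete obstruction to feasible \emph{curves} (not just lines) leaving the minimal face, which you have not supplied. Barvinok's own argument is of a different, topological character, and it is not clear that the Pataki-plus-one-dimension strategy can be made to work at all.
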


%


\subsection{Proof of \Cref{thm:Slemma3}}
To aid the reader, we restate the lemma here.
\Slemmathree*
\begin{proof}
The $\Leftarrow$ implication is trivial: by contradiction, if there is a $\hat{x}$ such that $g_i(\hat{x}) < 0$, then
\[\hat{x}^\top \left(\sum_{i=1}^3 \lambda_i Q_i \right) \hat x = \sum_{i=1}^3 \lambda_i g_i(\hat x) < 0,\]
a contradiction with $\sum_{i=1}^3 \lambda_i Q_i  \succeq 0$.

We now show the contrapositive of the other direction. Therefore, let us assume 
\begin{equation}\label{contradiction-assumption}
\nexists \lambda \in \mathbb{R}^3_+ \setminus \{0\},\, \textup{s.t.} \sum_{i=1}^3 \lambda_i Q_i \succeq 0
\end{equation}
and we show next that $\{g_i(x) < 0, i=1,2,3\} \neq \emptyset$. 
First of all, note that we can assume
\begin{equation}\label{2quad-assumption}
\{x\in \mathbb{R}^n \st g_i(x) < 0, i=2,3\} \neq \emptyset
\end{equation}
Since otherwise, by the S-lemma for 2 quadratics (\Cref{thm:Slemma2}), there exists $(\lambda_2, \lambda_3)\in \mathbb{R}^2_+ \setminus \{0\}$ such that
\[\lambda_2 Q_2 + \lambda_3 Q_3 \succeq 0.\]
Setting $\lambda_1 = 0$ contradicts \eqref{contradiction-assumption}. Now, consider the following SDP
\begin{subequations}\label{SDP-primal}
\begin{align}
\min\quad & \langle Q_1, X \rangle \\
\text{s.t.} \quad &  \langle Q_2, X \rangle \leq -1\\
& \langle Q_3, X \rangle  \leq -1 \\
& X \succeq 0
\end{align}
\end{subequations}
and its dual
\begin{subequations}\label{SDP-dual}
\begin{align}
\max\quad & -y_2 - y_3 \\
\text{s.t.} \quad &  y_2 Q_2 + y_3 Q_3 \preceq Q_1 \\
& y_2, y_3 \leq 0.
\end{align}
\end{subequations}
By \eqref{contradiction-assumption}, we obtain that \eqref{SDP-dual} is infeasible. Additionally, note that by \eqref{2quad-assumption},  problem \eqref{SDP-primal} satisfies Slater's conditions: indeed, for $\hat x$ in \eqref{2quad-assumption} the matrix
\[M\hat{x} \hat{x}^\top + \epsilon I \succ 0, \]
and is feasible for \eqref{SDP-primal}, for sufficiently large $M>0$ and small $\epsilon > 0$. SDP duality thus implies that  $\eqref{SDP-primal}$ must be unbounded. In particular, there exists $\hat X\succeq 0$ such that
\begin{align*}
\langle \hat X, Q_i \rangle = \epsilon_i && i=1,2,3,
\end{align*} 
for $\epsilon_i < 0$. This is almost what we want: we would like for $\hat X$ to have rank 1. We now work towards a rank reduction of $\hat X$.

Define the affine subspace
\[\mathcal{A}  = \{X \in \mathbb{S}^n \, :\, \langle X, Q_i \rangle = \epsilon_i,\, i=1,2,3\},\]
and $\mathcal{G} = \mathcal{A} \cap \mathbb{S}^n_+$. 

We claim that $\mathcal{G}$ is non-empty, bounded, and $\dim(\mathcal{A}) \geq {n+1\choose2} - 3$. Clearly $\hat{X}\in \mathcal{G}$.  Additionally, since there is a PD linear combination of $Q_1,Q_2,Q_3$, there exists $\theta_i$ such that
\[\sum_{i=1}^3 \theta_i Q_i \succ 0.\]
Note that
\[\mathcal{G} \subseteq \left\{X \succeq 0\, :\, \left\langle X, \sum_{i=1}^3 \theta_i Q_i \right\rangle \leq \sum_{i=1}^3 \theta \epsilon_i \right\}. \]
Since $\sum_{i=1}^3 \theta_i Q_i \succ 0$, the set on the right-hand side is bounded and thus $\mathcal{G}$ is bounded as well.

The statement regarding the dimension of $\mathcal{A}$ follows since $\mathbb{S}^n$ has dimension ${n+1\choose2}$ and $\mathcal{A}$ is defined via 3 affine equality constraints. We construct a rank 1 element of $\mathcal{G}$ distinguishing the following cases.

\paragraph{\bf Case 1: $\dim(\mathcal{G}) = {n+1\choose2} - 3$} In this case 
\[ \dim(\mathcal{G}) = {n+1\choose2} - {r+2\choose 2} \]
for $r=1$. By \Cref{thm:Barvinok} (see \cite{barvinok2001remark}), there exists $\tilde{X}\in \mathcal{G}$ such that
\[\text{rank}(\tilde X) \leq r = 1.\]

\paragraph{\bf Case 2: $\dim(\mathcal{G}) > {n+1\choose2} - 3$} In this case 
\[\dim(\mathcal{G}) \geq {n+1\choose2} - {r+2\choose 2} +1 \]
for $r=1$. By \Cref{thm:Pataki} (see \cite{pataki1998rank}), there exists $\tilde{X}\in \mathcal{G}$ such that
\[\text{rank}(\tilde X) \leq r = 1.\]

In both cases, we conclude that there exists $\tilde{x}\in \mathbb{R}^n$ such that
\[\tilde{x}^\top Q_i \tilde{x} = \epsilon_i < 0, \quad i=1,2,3.\]
\end{proof}

\section{Convex hull via aggregations: open case}\label{sec:proofthmopencase}

\subsection{Preliminary results} We begin by presenting the main tools we use in this section.
The following theorem is a classical linear algebra result. A proof may be found in ~\cite{Horn1985matrix}. 

\begin{theorem}[Cauchy's Interlacing theorem]\label{thm:Cauchy} Let $k,n$ be integers satisfying $1 \leq k < n$. Let $A \in \mathbb{S}^n$ and denote its eigenvalues $\lambda_1(A) \geq \lambda_1(A)  \geq \dots \geq \lambda_n(A)$. 
Let $\tilde{A}$ be any of its $k\times k$ principal submatrices. Then: 
$$ \lambda_{n-k+i}(A)\leq \lambda_i (\tilde{A})\leq \lambda_i (A). 	$$
\end{theorem}

We heavily rely on the next definition from \cite{yildiran2009convex}.
\begin{definition}
A semi-convex cone (SCC) is the union of two convex cones which are symmetric reflections of each other with respect to the origin. 
\end{definition}
The following result is from~\cite{yildiran2009convex}. 
\begin{theorem}\label{thm:SCC}
Let $\mathcal{P} =\{ x\in \mathbb{R}^{n+1}\, : \, x^{\top} P x < 0\} \neq \emptyset$. The following are equivalent:
\begin{enumerate}
\item There exists a linear hyperplane that does not intersect $\mathcal{P}$.
\item $P$ has one negative eigenvalue.
\item $\mathcal{P}$ is a semi-convex cone.
\end{enumerate}
\end{theorem}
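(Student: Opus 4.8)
The plan is to prove $(1)\Leftrightarrow(2)$ directly and then to close the cycle of implications with $(2)\Rightarrow(3)$ and $(3)\Rightarrow(1)$. Two elementary facts first: since $0^{\top}P0=0$, the origin does not lie in $\mathcal P$ (so a linear hyperplane is never disqualified from item~(1) merely by passing through $0$); and $\mathcal P\neq\emptyset$ is equivalent to $P$ having at least one negative eigenvalue. I will use repeatedly the reformulation of item~(1): a linear hyperplane $H=\{x:a^{\top}x=0\}$ with $a\neq 0$ satisfies $H\cap\mathcal P=\emptyset$ if and only if $x^{\top}Px\geq 0$ for all $x\in H$, i.e.\ the restriction of the quadratic form to $H$ is positive semidefinite.

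For $(2)\Rightarrow(1)$: if $P$ has a unique negative eigenvalue, let $v$ be a corresponding unit eigenvector and take $H=v^{\perp}$; expanding $x\in H$ in an orthonormal eigenbasis of $P$ annihilates the component along $v$, so $x^{\top}Px$ is a nonnegative combination of the remaining (nonnegative) eigenvalues and $H$ misses $\mathcal P$. For $(1)\Rightarrow(2)$: given such an $H$, choose an orthonormal basis adapted to $\mathbb R^{n+1}=H\oplus H^{\perp}$; in it, $P$ is represented by a matrix $\tilde P$ with the same spectrum whose leading $n\times n$ principal submatrix is $P|_H\succeq 0$. Cauchy's interlacing theorem (\Cref{thm:Cauchy}) then gives $\lambda_i(\tilde P)\geq\lambda_i(P|_H)\geq 0$ for $i=1,\dots,n$, so $P$ has at most one negative eigenvalue; together with $\mathcal P\neq\emptyset$ it has exactly one.

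For $(2)\Rightarrow(3)$: by Sylvester's law of inertia there is an invertible $R$ with $R^{\top}PR=\operatorname{diag}(-1,I_k,0_{n-k})$ for a suitable $k$. The linear isomorphism $x=Ry$ carries $\mathcal P$ onto $\mathcal P'=\{y=(y_0,\dots,y_n):y_0^2>y_1^2+\dots+y_k^2\}$, and maps convex cones to convex cones while commuting with $y\mapsto-y$, so it suffices to handle $\mathcal P'$. One writes $\mathcal P'=C'\cup(-C')$ with $C'=\{y:y_0>\|(y_1,\dots,y_k)\|\}$ (an open half-space when $k=0$), and $C'$ is an open convex cone because the Euclidean norm is subadditive and positively homogeneous; transporting back through $R$ exhibits $\mathcal P$ as a semi-convex cone. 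For $(3)\Rightarrow(1)$: write $\mathcal P=C\cup(-C)$ with $C$ a convex cone; from $C\subseteq\mathcal P$ and $0\notin\mathcal P$ we get $0\notin C$, and if some $x\in C$ had $-x\in C$ then convexity would force $0=\tfrac12 x+\tfrac12(-x)\in C$, a contradiction, so $C$ and $-C$ are disjoint nonempty convex sets. Separating them produces $a\neq 0$ with $a^{\top}x+a^{\top}x'\geq 0$ for all $x,x'\in C$, hence (setting $x'=x$ and using that $C$ is a cone) $a^{\top}x\geq 0$ on $C$. Since $\mathcal P$ is open, we may take $C$ open as well (the two constituent cones are the connected components of $\mathcal P$), and then the inequality must be strict, so the linear hyperplane $\{x:a^{\top}x=0\}$ misses both $C$ and $-C$.

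The one genuinely delicate step is $(3)\Rightarrow(1)$: upgrading ``$a^{\top}x\geq 0$ on $C$'' to a strict inequality — equivalently, ruling out that the separating hyperplane merely grazes $\mathcal P$ — uses that the constituent cones are open (equivalently, one replaces $C$ by $\operatorname{int}C$), which rests on $\mathcal P$ being open and on $C,-C$ being its connected components; here one must also be attentive to the exact reading of ``semi-convex cone'' for which the equivalence is stated, and it is reassuring that the normal-form argument in $(2)\Rightarrow(3)$ already delivers the two cones as open convex cones. The remaining implications reduce to a short eigenvector/interlacing computation for $(1)\Leftrightarrow(2)$ and the inertia normal form for $(2)\Rightarrow(3)$, so I do not expect difficulties there.
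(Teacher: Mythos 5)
A preliminary remark: the paper does not prove \Cref{thm:SCC} at all --- it imports the statement from Yildiran's paper --- so there is no in-paper proof to compare yours against, and I am judging your argument on its own merits. Your $(1)\Leftrightarrow(2)$ (restrict the quadratic form to the hyperplane and apply \Cref{thm:Cauchy}; conversely take the orthogonal complement of the negative eigenvector) and your $(2)\Rightarrow(3)$ (Sylvester normal form and the second-order-cone decomposition) are correct, and together they already yield $(1)\Rightarrow(2)$ and $(1)\Rightarrow(3)$, which are the only directions the paper actually uses in the proof of \Cref{thm:opencase}.

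The genuine gap is in $(3)\Rightarrow(1)$, precisely at the step you flag. To upgrade $a^{\top}x\geq 0$ on $C$ to a strict inequality you assert that the two constituent cones are the connected components of $\mathcal{P}$, hence open. Nothing in the definition of a semi-convex cone as stated in this paper (a union of two convex cones that are reflections of each other through the origin) guarantees this, and the implication is in fact false under that literal reading: take $P=-I_{n+1}$, so $\mathcal{P}=\mathbb{R}^{n+1}\setminus\{0\}$, and let $L$ be the lexicographically positive cone, i.e.\ the set of $x\neq 0$ whose first nonzero coordinate is positive. Then $L$ is a convex cone, $-L$ is its reflection, the two are disjoint, and $L\cup(-L)=\mathcal{P}$, so item~(3) holds; yet $P$ has $n+1$ negative eigenvalues and every linear hyperplane contains nonzero points, all of which lie in $\mathcal{P}$, so item~(1) fails. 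In this example $\mathcal{P}$ is connected (for $n\geq1$), so $L$ and $-L$ are not its connected components and $L$ is not open; any separating functional (e.g.\ $a=e_1$) merely grazes $\mathcal{P}$ and cannot be perturbed to miss it. The conclusion is that $(3)\Rightarrow(1)$ cannot be proved from the definition as written; the intended reading must require the two cones to be open (equivalently, to be the connected components of $\mathcal{P}$), which is what your $(2)\Rightarrow(3)$ construction happens to produce but what the hypothesis of $(3)\Rightarrow(1)$ does not hand you --- so your closing ``reassurance'' is circular. Once openness is built into the definition, the strictness step is immediate (if $a^{\top}x_0=0$ for some $x_0$ in the open set $C$, then $x_0-\delta a\in C$ violates $a^{\top}x\geq0$) and the connected-components detour is unnecessary; the rest of your argument for $(3)\Rightarrow(1)$ is then fine.
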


Henceforth, we refer to a linear hyperplane that does not intersect $\mathcal{P}$, an SCC of the form $\{x: x^{\top}P x < 0\}$, as a \emph{faux-separating} hyperplane of $\mathcal{P}$. The paper~\cite{yildiran2009convex} uses the term ``separating'', however, we prefer to reserve the term ``separating'' to refer to a separating hyperplane with respect to a set. In this case, a hyperplane that is \emph{faux}-separating $\mathcal{P}$ is splitting the set in two and both the half-spaces corresponding to the faux-separating hyperplane have a non-empty intersection with $\mathcal{P}$. In particular, $\mathcal{P}$ intersected with each half-space is a convex cone. In \Cref{fig:fauxseparating} we illustrate a faux-separating hyperplane.

\begin{figure}
    \centering
    \includegraphics[scale=0.2]{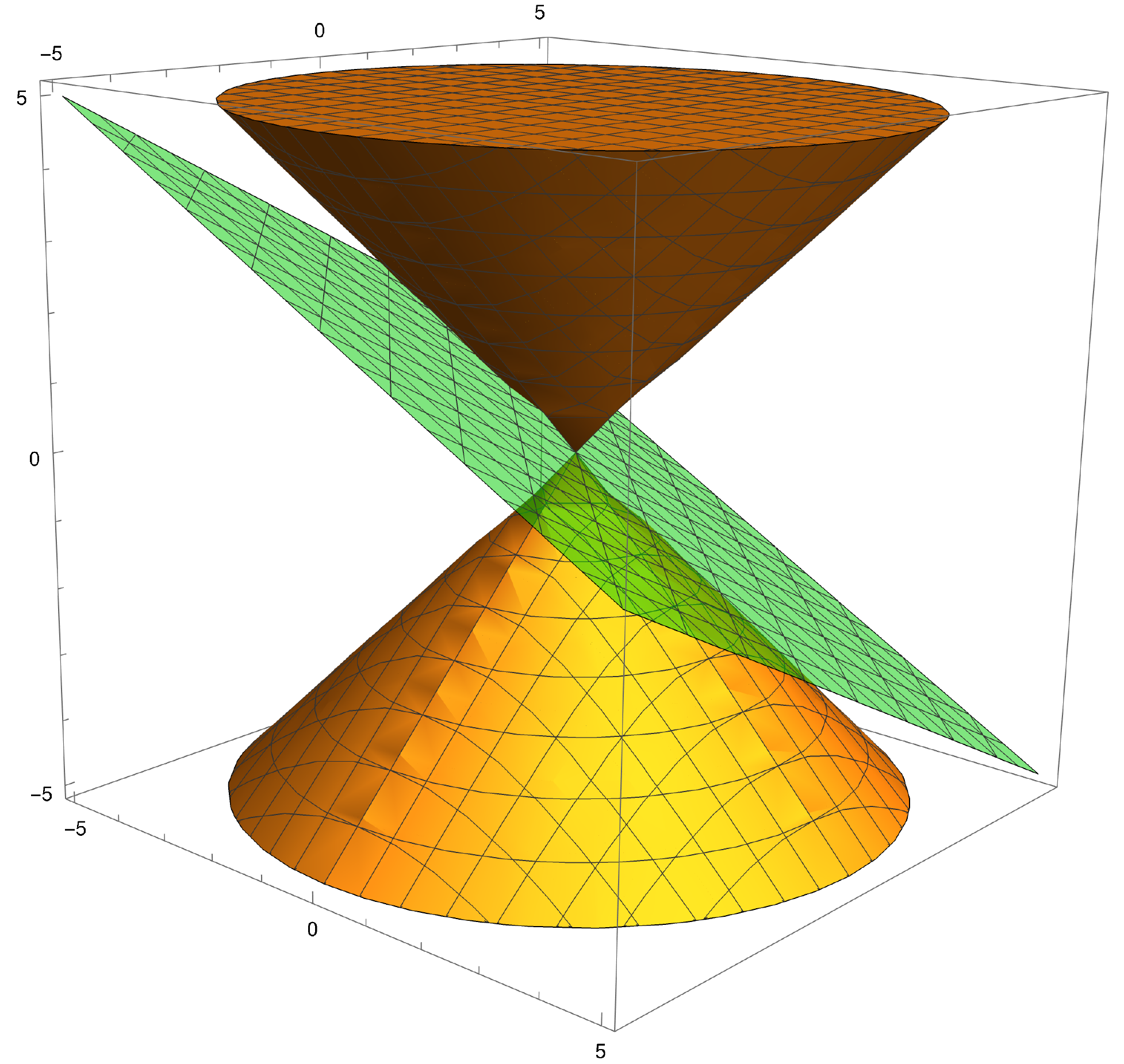}
    \caption{Example of a semi-convex cone $\mathcal{P}$ (orange) and a faux-separating hyperplane of $\mathcal{P}$ (green).}
    \label{fig:fauxseparating}
\end{figure}

\subsection{Separation in original space to homogenized space}\label{sec:proofopencase1}
Consider an arbitrary quadratic set 
\[ \genericquadopen = \left\{x: [x\quad 1] \begin{bmatrix} A_i & b_i \\ b^{\top}_i & c_i  \end{bmatrix} \begin{bmatrix} x \\ 1 \end{bmatrix} < 0, \ i \in [m] \right\}\]
and its homogenized version
\[ \genericquadopen^h = \left\{x:  [x\quad x_{n+1}] \begin{bmatrix} A_i & b_i \\ b^{\top}_i & c_i  \end{bmatrix} \begin{bmatrix} x \\ x_{n+1} \end{bmatrix}< 0, \ i \in [m] \right\}.\]

\begin{mylemma}\label{prop:validineqseparates}
Let $\alpha^{\top}x < \beta$ be a valid inequality for $\conv(\genericquadopen)$. If $\textup{conv}(\genericquadopen) \neq \mathbb{R}^n$, then $\{x: \alpha^\T x = \beta x_{n+1}\} \cap \genericquadopen^h = \emptyset$.
\end{mylemma}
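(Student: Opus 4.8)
The plan is to argue by contradiction: assume some $(x^*,t^*)\in\RR^{n+1}$ lies in $\genericquadopen^h$ and satisfies $\alpha^\top x^* = \beta t^*$, and derive a contradiction with either the hypothesis $\conv(\genericquadopen)\neq\RR^n$ or the validity of $\alpha^\top x<\beta$ on $\conv(\genericquadopen)$. (If $\genericquadopen^h=\emptyset$ the statement is immediate, so assume otherwise.) First I would isolate the following claim, which is the only place the hypothesis $\conv(\genericquadopen)\neq\RR^n$ enters: \emph{every point of $\genericquadopen^h$ has nonzero last coordinate}. Granting it, $t^*\neq 0$, and then homogeneity finishes the argument easily: the quadratic forms $x^\top A_ix + 2x_{n+1}\,b_i^\top x + c_ix_{n+1}^2$ defining $\genericquadopen^h$ are homogeneous of degree $2$, so rescaling $(x^*,t^*)$ by $1/t^*$ keeps it in $\genericquadopen^h$; since $\genericquadopen^h\cap\{x_{n+1}=1\}$ is exactly $\genericquadopen\times\{1\}$, this yields $x^*/t^*\in\genericquadopen\subseteq\conv(\genericquadopen)$, while the rescaled hyperplane relation gives $\alpha^\top(x^*/t^*)=\beta$, contradicting $\alpha^\top x<\beta$ on $\conv(\genericquadopen)$.

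The work is in the claim. Suppose for contradiction that $(x^*,0)\in\genericquadopen^h$, i.e.\ $(x^*)^\top A_ix^*<0$ for all $i\in[m]$; in particular $x^*\neq 0$. Consider the set of directions $D := \{v\in\RR^n : v^\top A_iv<0 \text{ for all } i\in[m]\}$, which is a cone, is open (a finite intersection of open sets), is centrally symmetric ($v\in D\iff -v\in D$), and is nonempty since $x^*\in D$. For any $v\in D$ and any fixed $\bar x\in\RR^n$, the map $\mu\mapsto(\bar x+\mu v)^\top A_i(\bar x+\mu v)+2b_i^\top(\bar x+\mu v)+c_i$ is a quadratic in $\mu$ with leading coefficient $v^\top A_iv<0$, hence tends to $-\infty$; thus $\bar x+\mu v\in\genericquadopen$ for all sufficiently large $\mu$. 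Taking $\bar x=0$ shows $\genericquadopen\neq\emptyset$; fixing any $\bar x\in\genericquadopen$ and using convexity, for every $v\in D$ the whole ray $\bar x+\RR_{\ge 0}v$ lies in $\conv(\genericquadopen)$, so $\bar x+D\subseteq\conv(\genericquadopen)$. Finally, $\conv(D)$ is a convex cone equal to its own negative, hence a linear subspace, and since it contains the nonempty open set $D$ it equals $\RR^n$; therefore $\conv(\genericquadopen)\supseteq\bar x+\conv(D)=\RR^n$, a contradiction.

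I expect the main obstacle to be this claim ---concretely, the step of upgrading a single ``direction at infinity'' $x^*$ with $(x^*)^\top A_ix^*<0$ into the conclusion $\conv(\genericquadopen)=\RR^n$. The two facts that make it work are that the set $D$ of such directions is \emph{open} (so it spans $\RR^n$) and \emph{symmetric} about the origin (so its convex hull is a genuine subspace, not merely a cone). The dehomogenization step and the use of the separating hyperplane are then bookkeeping.
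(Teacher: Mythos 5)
Your proof is correct and follows essentially the same route as the paper's: split on whether the homogenizing coordinate vanishes, dehomogenize when it does not, and in the degenerate case use that each quadratic restricted to a line with direction $\hat{x}$ is a quadratic in $\mu$ with negative leading coefficient, forcing $\conv(\genericquadopen)=\RR^n$. The only difference is cosmetic: where you assemble the open symmetric cone $D$ and argue $\conv(D)=\RR^n$, the paper simply writes an arbitrary $z$ as the midpoint of $z\pm M\hat{x}\in\genericquadopen$, using only the single direction $\hat{x}$ and its negative.
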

\begin{proof}
By contradiction, suppose there is a $(\hat{x},\hat{x}_{n+1}) \in  \{x : \alpha^\T x = \beta x_{n+1}\} \cap S^h $.  Then
\[\hat{x}^\T A_i \hat{x} + 2b_i^\T \hat{x}\hat{x}_{n+1} + c_i \hat{x}_{n+1}^2 < 0\quad \textup{for all }\ i \in [m].\]
If $\hat{x}_{n+1} \neq 0$ this implies that $\frac{1}{\hat{x}_{n+1}} \hat{x} \in S$ and $\alpha^\T \hat{x}/\hat{x}_{n+1} = \beta$. This is a contradiction with $\alpha^{\top}x < \beta$ being valid for $\conv(\genericquadopen)$.

If $\hat{x}_{n+1} = 0$, we obtain $\hat{x}^\T A_i \hat{x} < 0$ for $i \in [m]$. Consider $z\in \mathbb{R}^n$ arbitrary and $z^{\pm} = z \pm M \hat{x}$, then
\[(z^{\pm})^\top A_i   z^{\pm} + 2b_i^\top z^{\pm} + c_i = M^2 (\hat{x}^\T A_i \hat{x}) + M \tilde{b}_i + \tilde{c}_i \]
for some scalars $\tilde{b}_i , \tilde{c}_i$ that do not depend on $M$. This implies that for $M$ large enough $z^\pm \in S$, and thus $z\in \conv(\genericquadopen)$. This contradicts $\conv(\genericquadopen) \neq \mathbb{R}^n$.
\end{proof}

\subsection{Towards faux-separation of an aggregated homogeneous constraint}\label{sec:proofopencase2}
Let
\[\mathcal{Q} = \{x\in \mathbb{R}^{n+1}\, : \, g_i(x) < 0,\, i=1,2,3\}, \]
where $g_i(x) = x^\T Q_i x$, $n\geq 3$ and, 
\[\mathcal{Q}_\lambda = \{x\in \mathbb{R}^{n+1}\, : \, \sum_{i=1}^3 \lambda_i g_i(x) < 0\}. \]
Relying on \Cref{thm:Slemma3}, we can obtain the following result. 
\begin{mylemma}\label{thm:separation} Let $n \geq 3$ and let $H\subseteq \mathbb{R}^{n+1}$ be a linear hyperplane. Suppose that $\{Q_i\}_{i=1}^3 \subset \mathbb{S}^{n+1}$ satisfy the \PDLC{} condition. Then $H\cap \mathcal{Q} = \emptyset$ if and only if $\exists \lambda \geq 0$ such that $H\cap \mathcal{Q}_\lambda = \emptyset$.
\end{mylemma}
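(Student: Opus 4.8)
The plan is to reduce the statement to the symmetric three-quadratic S-lemma (\Cref{thm:Slemma3}) by restricting the quadratic forms to the hyperplane $H$ and keeping track of the \PDLC{} property under this restriction. The ``if'' direction is immediate: if $H\cap \mathcal{Q}_\lambda = \emptyset$ for some $\lambda\geq 0$, then since $\sum_i \lambda_i g_i(x) < 0$ whenever $g_i(x)<0$ for all $i$, we have $\mathcal{Q}\subseteq \mathcal{Q}_\lambda$, hence $H\cap \mathcal{Q} \subseteq H\cap \mathcal{Q}_\lambda = \emptyset$. So the work is entirely in the ``only if'' direction.

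For the ``only if'' direction, first I would fix a linear isomorphism $L:\mathbb{R}^n \to H$ (recall $\dim H = n \geq 3$), and define $\tilde g_i:\mathbb{R}^n\to\mathbb{R}$ by $\tilde g_i(y) = g_i(Ly) = y^\top (L^\top Q_i L) y$, so that $\tilde Q_i := L^\top Q_i L \in \mathbb{S}^n$. The hypothesis $H\cap \mathcal{Q}=\emptyset$ says precisely that $\{y\in\mathbb{R}^n : \tilde g_i(y) < 0,\ i\in[3]\} = \emptyset$. I would then check that the $\tilde Q_i$ inherit a positive-definite linear combination: if $\sum_i \theta_i Q_i \succ 0$ on $\mathbb{R}^{n+1}$, then for any nonzero $y$, $y^\top(\sum_i \theta_i \tilde Q_i) y = (Ly)^\top (\sum_i \theta_i Q_i)(Ly) > 0$ since $Ly \neq 0$; hence $\sum_i \theta_i \tilde Q_i \succ 0$. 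Now \Cref{thm:Slemma3} applies (using $n\geq 3$) and yields $\lambda\in\mathbb{R}^3_+\setminus\{0\}$ with $\sum_i \lambda_i \tilde Q_i \succeq 0$, i.e. $L^\top(\sum_i \lambda_i Q_i)L \succeq 0$, i.e. $\sum_i\lambda_i g_i(x) = x^\top(\sum_i \lambda_i Q_i)x \geq 0$ for every $x\in H$. This says exactly $H\cap \mathcal{Q}_\lambda = \emptyset$, as desired.

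The main obstacle I anticipate is a subtle one: the S-lemma output gives $H\cap\mathcal{Q}_\lambda=\emptyset$ in the sense that $\sum_i\lambda_ig_i$ is nonnegative on $H$, and one must be careful that this is genuinely the statement $H\cap\mathcal{Q}_\lambda=\emptyset$ where $\mathcal{Q}_\lambda$ uses a \emph{strict} inequality — which it is, since nonnegativity on $H$ rules out any $x\in H$ with $\sum_i\lambda_ig_i(x)<0$. A second point of care is the preservation of \PDLC{} under restriction to $H$: this works cleanly because $L$ has trivial kernel, so no nonzero vector of $\mathbb{R}^n$ maps to $0$; I would state this explicitly. Finally, one should note the dimension bookkeeping: $H$ is an $n$-dimensional linear subspace of $\mathbb{R}^{n+1}$, so the restricted forms live on $\mathbb{R}^n$ with $n\geq 3$, matching the hypothesis of \Cref{thm:Slemma3} exactly; this is why the hypothesis $n\geq 3$ in the lemma statement is exactly what is needed.
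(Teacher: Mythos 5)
Your proposal is correct and follows essentially the same route as the paper's proof: the paper parameterizes $H$ as $\{Uw : w\in\mathbb{R}^n\}$ with $U$ of full column rank (your isomorphism $L$), verifies that \PDLC{} passes to the restricted forms $U^\T Q_i U$ via injectivity of $U$, and invokes \Cref{thm:Slemma3} to obtain $\lambda$ with $\sum_i \lambda_i U^\T Q_i U \succeq 0$, which gives $H\cap\mathcal{Q}_\lambda=\emptyset$. No substantive differences.
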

\begin{proof}
The ``if'' part is direct: suppose $\exists \lambda \geq 0$ such that $H\cap \mathcal{Q}_\lambda=\emptyset$. Since $\mathcal{Q}_\lambda \supseteq \mathcal{Q}$, we immediately conclude $H\cap \mathcal{Q} = \emptyset$.

Now suppose $H\cap \mathcal{Q} = \emptyset$. We can parameterize the hyperplane $H$ as 
\[H = \{x\in \mathbb{R}^{n+1}\, : \, x = Uw, \, w\in \mathbb{R}^n\}. \]
where the $n$ columns of $U$ are linearly independent.
Therefore, the system
\begin{align}
w^\T U^\T Q_i U w <\, & 0 \quad i=1,2,3  \label{eq:infeasible}
\end{align}
is infeasible. Note that assuming \PDLC{} for $Q_i$ implies \PDLC{} for $U^\T Q_i U$. Indeed, let $\theta_i$ be the multipliers for the former, then
\[w^\T \left(\sum_{i=1}^3 \theta_i U^\T Q_i U \right) w = w^\T U^\T \left(\sum_{i=1}^3 \theta_i Q_i\right) U w.\]
Since $Uw = 0 \Rightarrow w = 0$, we conclude this is a PD linear combination of $U^\T Q_i U$. Thanks to this, we can apply \Cref{thm:Slemma3} and show that the infeasibility of \eqref{eq:infeasible} implies that there exists $\lambda\geq 0$ such that
\[ \sum_{i=1}^3 \lambda_i U^\T Q_i U \succeq 0.\]
Thus, there is no $w\in \mathbb{R}^n$ such that
\[ \sum_{i=1}^3 \lambda_i w^\T U^\T Q_i Uw  < 0\]
i.e., there is no $x\in H$ such that
\[ \sum_{i=1}^3 \lambda_i x^\T Q_i x  < 0\]
This implies $H\cap \mathcal{Q}_\lambda =\emptyset$.
\end{proof}

\subsection{Proof of \Cref{thm:opencase}}\label{sec:puttingittogetheropencase}
%
%
We are now ready to prove \Cref{thm:opencase}. For the convenience of the reader, we restate it here.
\thmopen*
\begin{proof}
The $\subseteq$ inclusion follows by definition. For the other direction, it suffices to take $\hat{x}\not\in \conv(\genericquadopen)$, and show that there exists $ \lambda\in \Omega$ such that $\hat{x} \not\in \genericquadopen_\lambda$. 

Since $\hat{x}\not\in \conv(\genericquadopen)$, and $\conv(\genericquadopen)$ is open, there exists $\alpha\in\mathbb{R}^n\setminus \{0\}$, such that
\[\conv(\genericquadopen) \subseteq \{x\st \alpha^\top x < \alpha^\top \hat{x}\}. \]
%
%
Since $\conv(\genericquadopen) \neq \mathbb{R}^n$, by \Cref{prop:validineqseparates} we have that $\{ (x, x_{n +1}) \in \mathbb{R}^{n +1}: \alpha^\top{x} = (\alpha^{\top}\hat{x}) x_{n +1}\}\cap \genericquadopen^h = \emptyset.$ Now, by applying \Cref{thm:separation}, with $H = \{ (x, x_{n +1} ) \in \mathbb{R}^{n +1}: \alpha^\top{x} = (\alpha^{\top}\hat{x}) x_{n +1}\}$ we obtain that there exists $\lambda\in \mathbb{R}^3_+$ such that
\begin{equation}\label{eq:inhomspace}
 (\genericquadopen^h)_\lambda \cap H = \emptyset.
\end{equation}
%
Note that $(\hat{x},1)\in H$. This implies $(\hat{x},1)\not\in (\genericquadopen^h)_\lambda$, i.e.,
\[ [\hat{x}\quad 1] \left(\sum_{i=1}^3 \lambda_i \begin{bmatrix} A_i & b_i \\ b^{\top}_i & c_i  \end{bmatrix} \right) \begin{bmatrix} \hat{x} \\ 1 \end{bmatrix} \geq 0,\]
therefore, $\hat{x}\not\in \genericquadopen_\lambda$. It remains to argue that $\lambda \in \Omega$. 

From \Cref{thm:SCC}, we know that $(\genericquadopen^h)_\lambda$ is an SCC, and $H$ faux-separates it. On one hand, this settles that $\nu(\genericquadopen^h_\lambda) = 1$ implying that $\nu(\genericquadopen_\lambda) \leq 1$ (by the interlacing \Cref{thm:Cauchy}).
Additionally,
\[(\genericquadopen^h)_\lambda^+ \coloneqq (S^h)_\lambda \cap \{(x, x_{n +1})\st \alpha^\top x < (\alpha^{\top}\hat{x}) x_{n+1}\} \]
is a convex set.  We claim that 
\[\conv(\genericquadopen) \times \{1\} \subseteq (\genericquadopen^h)_\lambda^+. \]
Indeed, every $\tilde{x} \in S$ satisfies $\alpha^\top \tilde{x} < \alpha^{\top}\hat{x}$, and thus
\[(\tilde{x},1) \in (\genericquadopen^h)_\lambda^+.\]
which implies
\[\genericquadopen\times \{1\} \subseteq (\genericquadopen^h)_\lambda^+.\]
The claim follows as $(\genericquadopen^h)_\lambda^+$ is convex.
To conclude, we note that this implies
\begin{align*}
\conv(\genericquadopen) \times \{1\} & \subseteq (\genericquadopen^h)_\lambda^+ \cap \{(x,x_{n+1}) \st x_{n+1} = 1\} \\
& \subseteq (\genericquadopen^h)_\lambda \cap \{(x,x_{n+1}) \st x_{n+1} = 1\} \\
& = \genericquadopen_\lambda \times \{1\},
\end{align*}
which completes the proof that $\lambda \in \Omega$.
\end{proof}

\section{Convex hull via aggregations: closed case}\label{sec:proofclosedcase}

%
\subsection{Preliminary results}
In this section, we use the following result from~\cite{Modaresi2017}.
\begin{mylemma}[\cite{Modaresi2017}]\label{lemma:jp}
Let $A$ and $B$ be two non-empty closed sets such that
\[A \subseteq \overline{\inte(A)}\]
and $B$ is convex. If $\conv(\inte(A)) = \inte(B)$, then $\clconv(A) = B$.
\end{mylemma}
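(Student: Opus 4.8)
The plan is to establish the two inclusions $\clconv(A)\subseteq B$ and $B\subseteq \clconv(A)$ separately, using only elementary topology together with a single standard fact from convex analysis: if $C$ is convex with $\inte(C)\neq\emptyset$, then $\overline{\inte(C)}=\overline{C}$ (e.g.\ Rockafellar, \emph{Convex Analysis}, Theorem 6.3). Throughout, overline denotes closure.

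For the inclusion $\clconv(A)\subseteq B$, I would chain the hypotheses directly. Since $\inte(A)\subseteq \conv(\inte(A)) = \inte(B)\subseteq B$ and $B$ is closed, taking closures gives $\overline{\inte(A)}\subseteq B$. Combining this with the hypothesis $A\subseteq \overline{\inte(A)}$ yields $A\subseteq B$. As $B$ is closed and convex, it contains the closed convex hull of each of its subsets, so $\clconv(A)\subseteq B$.

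For the reverse inclusion, I would first observe that $\inte(A)\neq\emptyset$: if it were empty, then $\overline{\inte(A)}$ would be empty, and the hypothesis $A\subseteq\overline{\inte(A)}$ would force $A=\emptyset$, contradicting nonemptiness of $A$. Consequently $\inte(B)=\conv(\inte(A))\supseteq\inte(A)\neq\emptyset$, so $B$ is a closed convex set with nonempty interior. Now starting from $\inte(B)=\conv(\inte(A))\subseteq \conv(A)\subseteq \clconv(A)$ and taking closures (the right-hand side being closed) gives $\overline{\inte(B)}\subseteq\clconv(A)$. Finally, applying the convex-analysis fact to $B$, which is convex with nonempty interior, we get $B=\overline{B}=\overline{\inte(B)}\subseteq\clconv(A)$.

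The two inclusions together yield $\clconv(A)=B$. I expect the only delicate point to be the verification that $\inte(B)\neq\emptyset$, which is precisely where the condition $A\subseteq\overline{\inte(A)}$ (excluding low-dimensional components of $A$) is used; without it one could have $\overline{\inte(B)}\subsetneq B$, and the reverse inclusion would break down.
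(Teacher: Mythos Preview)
Your proof is correct. The paper itself does not prove this lemma; it is quoted as a preliminary result from \cite{Modaresi2017} and used as a black box in the proof of Theorem~\ref{thm:closedcase}. Your argument---two inclusions, with the key step $\overline{\inte(B)}=\overline{B}=B$ justified via the standard convex-analysis fact (Rockafellar, Theorem~6.3) once you have checked $\inte(B)\neq\emptyset$---is the natural way to establish it, and your identification of the hypothesis $A\subseteq\overline{\inte(A)}$ as precisely what forces $\inte(A)$ (hence $\inte(B)$) to be nonempty is exactly right.
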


Additionally, we need to prove the following lemma.
\begin{mylemma}\label{prop:interiors}
If $\{A_i\}_{i\in I}$ is a collection of sets such that $\bigcap_{i\in I} \inte(A_i)$ is open, then
\[\inte\left(\bigcap_{i\in I} A_i \right) = \bigcap_{i\in I} \inte(A_i).\] 
\end{mylemma}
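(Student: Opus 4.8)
The plan is to prove the two inclusions separately, observing that one of them is completely standard and the other is an immediate consequence of the hypothesis.

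For the inclusion $\inte\left(\bigcap_{i\in I} A_i\right) \subseteq \bigcap_{i\in I}\inte(A_i)$, I would simply invoke monotonicity of the interior operator: for every $j\in I$ we have $\bigcap_{i\in I} A_i \subseteq A_j$, hence $\inte\left(\bigcap_{i\in I} A_i\right) \subseteq \inte(A_j)$, and intersecting over all $j\in I$ yields the claim. This direction does not use any hypothesis on the family $\{A_i\}_{i\in I}$.

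For the reverse inclusion $\bigcap_{i\in I}\inte(A_i) \subseteq \inte\left(\bigcap_{i\in I} A_i\right)$, I would use the characterization of the interior of a set as its largest open subset. Setting $U \coloneqq \bigcap_{i\in I}\inte(A_i)$, we have $U \subseteq \bigcap_{i\in I} A_i$ because $\inte(A_i)\subseteq A_i$ for each $i$, and by hypothesis $U$ is open. Since any open subset of $\bigcap_{i\in I} A_i$ is contained in $\inte\left(\bigcap_{i\in I} A_i\right)$, we conclude $U \subseteq \inte\left(\bigcap_{i\in I} A_i\right)$, completing the proof.

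There is no genuine obstacle in this argument; the content lies entirely in isolating where the hypothesis is needed. The openness of $\bigcap_{i\in I}\inte(A_i)$ is exactly what powers the second inclusion — without it the reverse containment can fail, as a nested family of closed intervals shrinking to a point shows — so the statement is essentially sharp, and the proof is short.
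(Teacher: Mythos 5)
Your proof is correct and follows essentially the same route as the paper: the forward inclusion by monotonicity of the interior, and the reverse inclusion by noting that $\bigcap_{i\in I}\inte(A_i)$ is an open subset of $\bigcap_{i\in I}A_i$ and hence contained in its interior. The only difference is that you spell out the (standard) forward inclusion and add a sharpness remark, both of which the paper omits.
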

Before moving to the proof, we note that in \cite{Modaresi2017} ---where the authors obtain a result as Yildiran's for the closed case--- such a lemma is not needed. This is because they deal with a finite intersection, and the interior behaves well with a finite intersection. In our case, we may have an infinite intersection.
\begin{proof}
The inclusion $\subseteq$ is always true, therefore we only need to show $\supseteq$. Since $\inte(A_i)\subseteq A_i$, it always holds that
\[\bigcap_{i\in I} \inte(A_i) \subseteq \bigcap_{i\in I} A_i\]
Since the set on the left is open, and the interior of a set is the largest open set contained in it, we conclude
\[\bigcap_{i\in I} \inte(A_i) \subseteq \inte\left(\bigcap_{i\in I} A_i \right) \]
\end{proof}

\subsection{Proof of \Cref{thm:closedcase}}

Using the results from the previous section, we are ready to prove \Cref{thm:closedcase} which, for the convenience of the reader, we restate here.
\thmclosed*
\begin{proof}
We know from \Cref{thm:opencase} that
\[\conv(\inte(\genericquadclosed)) = \bigcap_{\lambda\in \tilde{\Omega}} \inte(\genericquadclosed_\lambda)\]
where 
\[\tilde{\Omega} \coloneqq \{\lambda \in \mathbb{R}^3_+\st \inte(\genericquadclosed_\lambda) \supseteq \conv(\inte(\genericquadclosed)) \textup{ and } \nu(\genericquadclosed_\lambda) \leq 1 \}. \]
Since $\conv(\inte(\genericquadclosed))$ is an open set,  the intersection $\bigcap_{\lambda\in  \tilde{\Omega}} \inte(\genericquadclosed_\lambda)$ is open as well. \Cref{prop:interiors} then implies
\[\bigcap_{\lambda\in \tilde{\Omega}} \inte(\genericquadclosed_\lambda) = \inte\left(\bigcap_{\lambda\in \tilde{\Omega}} \genericquadclosed_\lambda\right)\]
We would like to use $A = S$ and $B= \bigcap_{\lambda\in \tilde{\Omega}} \genericquadclosed_\lambda$  in \Cref{lemma:jp}. $A$ satisfies the necessary hypothesis directly. $B$ is closed since it is the intersection of closed sets, and it is convex since its interior is convex. Therefore, \Cref{lemma:jp} implies that
\[\clconv(\genericquadclosed) = \bigcap_{\lambda\in \tilde{\Omega}} \genericquadclosed_\lambda.\]
It remains to show that we can replace $\tilde{\Omega}$ with $\Omega'$ in the intersection. Recall that, by definition of $\Omega'$ we have
\[\conv(\genericquadclosed) \subseteq \bigcap_{\lambda\in \Omega'} \genericquadclosed_\lambda\]
and since the set on the right is closed we have
\[\clconv(\genericquadclosed) \subseteq \bigcap_{\lambda\in \Omega'} \genericquadclosed_\lambda.\]
For the other direction, we first prove that $\tilde{\Omega} \subseteq \Omega'$. Indeed, the condition regarding the negative eigenvalue remains unchanged, and under the assumption $\genericquadclosed \subseteq \overline{\inte(\genericquadclosed)}$ we have $\clconv(\inte(\genericquadclosed)) = \clconv(\genericquadclosed)$. Thus,
\begin{align*}
\lambda \in \tilde{\Omega}\Rightarrow \inte(\genericquadclosed_\lambda) \supseteq \conv(\inte(\genericquadclosed)) & \Rightarrow \genericquadclosed_\lambda \supseteq \clconv(\inte(\genericquadclosed)) \\ 
& \Leftrightarrow \genericquadclosed_\lambda \supseteq \clconv(\genericquadclosed) \\ 
& \Rightarrow \genericquadclosed_\lambda \supseteq \conv(\genericquadclosed) \\
& \Rightarrow \lambda \in \Omega'. \\
\end{align*}
Therefore
\[\clconv(\genericquadclosed) = \bigcap_{\lambda\in \tilde{\Omega}} \genericquadclosed_\lambda \supseteq \bigcap_{\lambda\in \Omega'} \genericquadclosed_\lambda. \]
\end{proof}

\section{Counterexample proofs}\label{sec:counterexproofs}

\subsection{Proof of \Cref{prop:4quadexample}} \label{sec:4quadproof}
This example shows that a generalization of our main theorem is unlikely to hold in the case of four quadratics.
\fourquadexample*
The rest of this subsection is dedicated to the proof of this proposition.

First of all, note that $\sum_{i=1}^4 \theta_i \begin{bmatrix} A_i & b_i \\ b_i & c_i  \end{bmatrix} \succ 0$ where $\theta_1 = -1$, $\theta_2 = -40$, $\theta_3 = -40$, $\theta_4 = -40$, i.e. the \PDLC{} condition holds.  Also it is easily verified that $\textup{conv}(\genericquadopen) \neq \mathbb{R}^3$ (we actually show below that $S$ is bounded). However, in this example 
\[\conv(S) \subsetneq \bigcap \{ \genericquadopen_\lambda \st \lambda\in \mathbb{R}^n_+,\, \conv(S) \supseteq \genericquadopen_\lambda \}.\]
To show this last claim, we begin by showing that the set is bounded. While this can be verified numerically (see \Cref{fig:counterex4quads}) using a general-purpose solver, the non-convex nature of the set can impair solvers in certifying global optimality. For this reason, we provide a more analytical proof sketch of this claim which involves exact calculations. We do not provide all details, since they would result in a tedious proof, but we provide enough detail to support our claims.

It is worth mentioning that boundedness cannot be obtained from a simple aggregation-based argument: in \cite{polik2007survey}, the authors show that there is no $(\lambda_2,\lambda_3,\lambda_4) \geq 0$ such that
\[A_1 + \lambda_2 A_2 + \lambda_3 A_3 + \lambda_4 A_4\succ 0.\]
The same argument can be adapted to show that there is no $(\lambda_1, \lambda_2,\lambda_3,\lambda_4) \geq 0$ such that
\[\lambda_1 A_1 + \lambda_2 A_2 + \lambda_3 A_3 + \lambda_4 A_4\succ 0.\]
The standard SDP relaxation, which could also be considered to show boundedness analytically, it is not useful either, as it is unbounded.
\begin{claim}\label{claim:boundedS}
The set $S$ is bounded. Moreover, it is contained in the box $[-8,8]^3$.
\end{claim}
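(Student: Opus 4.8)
The plan is to establish that every point $x = (x_1,x_2,x_3) \in \genericquadopen$ satisfies $|x_j| \leq 8$ for each $j$, using only the four defining inequalities and elementary estimates. The key observation is that the constraints $x^\top A_i x + c_i < 0$ for $i=2,3,4$ read
\[
-2.1 x_1^2 + x_2^2 + x_3^2 < 0,\qquad x_1^2 - 2.1 x_2^2 + x_3^2 < 0,\qquad x_1^2 + x_2^2 - 2.1 x_3^2 < 0,
\]
so each coordinate squared is comparable to the sum of the other two: from the $i=2$ constraint, $x_2^2 + x_3^2 < 2.1\, x_1^2$, and cyclically. Adding all three gives $x_1^2 + x_2^2 + x_3^2 < 1.05(x_1^2+x_2^2+x_3^2)$, which is vacuous, so one cannot conclude boundedness from $i=2,3,4$ alone (indeed that cone is unbounded); the first constraint $x^\top A_1 x - 1 < 0$ must be used to cap the magnitude.

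First I would show that on $\genericquadopen$ all three coordinates have the same sign pattern up to the symmetry of $A_1$, or more precisely that the off-diagonal structure of $A_1$ forces $x_1 x_2, x_1 x_3, x_2 x_3$ to be controlled. Write the first constraint as
\[
x_1^2 + x_2^2 + x_3^2 + 2.2(x_1 x_2 + x_1 x_3 + x_2 x_3) < 1.
\]
The plan is to combine this with the cyclic inequalities $x_j^2 \geq \tfrac{1}{2.1}(x_k^2 + x_\ell^2)$ (for $\{j,k,\ell\}=\{1,2,3\}$) to derive a lower bound on $x_1^2+x_2^2+x_3^2$ times a positive constant, contradicting membership once any coordinate is large. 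Concretely, using $|x_k x_\ell| \leq \tfrac12(x_k^2+x_\ell^2)$ one gets $x_1 x_2 + x_1 x_3 + x_2 x_3 \geq -(x_1^2+x_2^2+x_3^2)$, which only yields $-1.2(x_1^2+x_2^2+x_3^2) < 1$ — not useful. So the real work is to show the cross terms cannot be that negative on $\genericquadopen$: the cyclic constraints from $i=2,3,4$ prevent any single coordinate from dominating, hence force the three coordinates to be roughly equal in magnitude, and then $x_1 x_2 + x_1 x_3 + x_2 x_3$ is bounded below by a constant strictly larger than $-\tfrac{1}{2.2}(x_1^2+x_2^2+x_3^2)$, so the first constraint bounds $x_1^2+x_2^2+x_3^2$.

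The cleanest route I would try: from $x_2^2+x_3^2 < 2.1 x_1^2$ and the two cyclic versions, deduce that $\tfrac{1}{3.1}\,\|x\|^2 < x_j^2 < \tfrac{2.1}{3.1}\,\|x\|^2$ for each $j$ (where $\|x\|^2 = x_1^2+x_2^2+x_3^2$), i.e. every coordinate has comparable size. Then $|x_j x_k| \geq \tfrac{1}{3.1}\|x\|^2$ for each pair, and with a sign analysis (the product $x_1 x_2 x_3$ and the pairwise products cannot all be negative) one shows $x_1 x_2 + x_1 x_3 + x_2 x_3 \geq c\,\|x\|^2$ for some explicit $c > -\tfrac{1}{2.2}$; plugging into the first constraint gives $(1 + 2.2c)\|x\|^2 < 1$ with $1+2.2c > 0$, hence $\|x\|^2 \leq 1/(1+2.2c)$, and a crude numerical bound on $1/(1+2.2c)$ together with $x_j^2 < \tfrac{2.1}{3.1}\|x\|^2$ yields $|x_j| \leq 8$ comfortably. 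The main obstacle is the sign bookkeeping for the cross terms: the cone defined by $i=2,3,4$ contains points with two negative coordinates, and one must verify that even in the worst sign configuration compatible with all four inequalities the quantity $x_1x_2+x_1x_3+x_2x_3$ stays above $-\tfrac{1}{2.2}\|x\|^2$. I expect this to come down to checking that the quadratic form $\sum_i x_i^2 + 2.2\sum_{i<j} x_i x_j$ restricted to the cone $\{x_j^2 \le \tfrac{2.1}{3.1}\|x\|^2 \ \forall j\}$ is positive definite (equivalently, bounded below by a positive multiple of $\|x\|^2$), which is a compact optimization over the sphere intersected with that cone and can be done with Lagrange/KKT conditions or by a direct case split over which of the constraints $x_j^2 = \tfrac{2.1}{3.1}\|x\|^2$ are active. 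Since we only need the explicit box $[-8,8]^3$ and not a tight bound, generous rounding throughout keeps the calculation manageable.
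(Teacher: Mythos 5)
Your plan is sound and, once the one unverified constant is checked, it proves the claim; it follows the same high-level strategy as the paper (use the three cyclic constraints to control the relative sizes of the coordinates, then feed this into the first constraint with a case analysis on signs) but with genuinely different mechanics. The paper converts $\|x\|^2 < 3.1\min_j x_j^2$ into $\|x\|^2 < \tfrac{31}{30}\left(|x_1x_2|+|x_2x_3|+|x_1x_3|\right)$ via ``min $\leq$ geometric mean $\leq$ average'', aggregates this with $\tfrac12\left(x^\top A_1x-1\right)<0$, and then verifies per orthant that the resulting $3\times 3$ matrix is positive definite with $\lambda_{\min}>8/1000$; you instead extract the comparability window $\tfrac{1}{3.1}\|x\|^2<x_j^2$ and lower-bound the cross-term sum directly. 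The step you leave as a to-do is the decisive one, and it does go through: since the three pairwise products cannot contain an odd number of negative signs (their product is $(x_1x_2x_3)^2\geq 0$ and no coordinate vanishes), the worst case is $|x_ix_j|-|x_k|(|x_i|+|x_j|)$ with $|x_i|=|x_j|=\|x\|/\sqrt{3.1}$ and $|x_k|^2=\tfrac{1.1}{3.1}\|x\|^2$, giving $x_1x_2+x_1x_3+x_2x_3\geq \tfrac{1-2\sqrt{1.1}}{3.1}\,\|x\|^2\approx -0.3541\,\|x\|^2$, which indeed exceeds $-\|x\|^2/2.2$; hence $(1+2.2c)\|x\|^2<1$ with $1+2.2c\approx 0.221$, so $\|x\|<2.2$, comfortably inside $[-8,8]^3$. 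Your instinct that the sign bookkeeping is the crux is correct: the unconstrained minimum of the cross-term sum on the unit sphere is $-1/2<-1/2.2$, so without the comparability constraints the argument would fail, and one must also note that the interior Lagrange critical point with value $-\|x\|^2/2$ violates the comparability window. In short, your route trades the paper's eigenvalue computation for a small constrained optimization; both are elementary, and yours arguably makes more transparent why the cyclic constraints are what rescues the indefinite form $x^\top A_1 x$.
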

We remark that the bounding box $[-8,8]^3$ is by no means optimal. Our numerical experiments indicate that it is actually $[-1.23,1.23]^3$. We just focus on a box whose validity proof is simple and rigorous.
\begin{proof}(\Cref{claim:boundedS})
We begin by noting that $A_i$ $i=2,3,4$ induce the following system:
\begin{subequations}\label{eq:system2-4}
\begin{align}
    -2.1 x_1^2 + x_2^2 + x_3^2 <& 0 \\
    x_1^2 - 2.1 x_2^2 + x_3^2 <& 0 \\
    x_1^2 + x_2^2 -2.1 x_3^2 <& 0 
\end{align}
\end{subequations}
If we add and subtract $x_1^2, x_2^2$, and $x_3^2$ in the first, second, and third inequality, respectively, we obtain
\begin{equation}
    \label{eq:relaxationwithmin}
    x_1^2 + x_2^2 + x_3^2 < 3.1\min\{x_1^2,x_2^2, x_3^3\}
\end{equation}
Note that 
\[\min\{x_1^2,x_2^2, x_3^3\} \leq \min\{|x_1 x_2|,|x_2 x_3|, |x_1 x_3|\}.\]
Using that the minimum is bounded by the average, we obtain that the following inequality defines a relaxation of \eqref{eq:system2-4}
\begin{equation}
    \label{eq:pwconvexrelaxation}
    x_1^2 + x_2^2 + x_3^2 < \frac{31}{30}\left(|x_1 x_2|+|x_2 x_3|+ |x_1 x_3|\right)
\end{equation}
Consider now the aggregation of $x^\top A_1 x - 1 < 0$ and \eqref{eq:pwconvexrelaxation} with weights $\frac{1}{2}$ and 1, respectively. We obtain the following aggregated constraint:
\begin{align*}
0>&    \frac{1}{2}\left( x_1^2 + x_2^2 + x_3^2  + 2.2 x_1 x_2 + 2.2 x_2x_3 + 2.2 x_1x_2 - 1\right)\\
& + x_1^2 + x_2^2 + x_3^2 - \frac{31}{30}\left(|x_1 x_2|+|x_2 x_3|+ |x_1 x_3|\right)
\end{align*}
Let us now restrict to an orthant, that is, we fix the sign of each $x_i$. In this case, it is not hard to see that the last expression becomes
\[x^\top Q x < \frac{1}{2}\]
with
\[\displaystyle Q = 
\begin{bmatrix} 
\frac{3}{2} & \frac{11}{20} - \frac{31}{60}q_{1,2} & \frac{11}{20} - \frac{31}{60} q_{1,3} \\
 \frac{11}{20} - \frac{31}{60} q_{1,2} & \frac{3}{2} & \frac{11}{20} - \frac{31}{60} q_{2,3} \\
  \frac{11}{20} - \frac{31}{60} q_{1,3} & \frac{11}{20} - \frac{31}{60} q_{2,3} & \frac{3}{2} 
\end{bmatrix}\]
where each $q_{i,j} = \text{sgn}(x_ix_j) \in \{-1,1\}$ is fixed in any orthant. A case analysis on $Q$ shows that it is always a positive definite matrix. Indeed, the case when $\lambda_{\min}(Q)$ is the smallest is obtained when exactly two among $\{q_{1,2}, q_{2,3}, q_{1,3}\}$ take the value $-1$. In this case, the smallest eigenvalue is
\[\frac{22}{15 \left(\sqrt{8193}+91\right)}  > \frac{8}{1000}.\]
This immediately shows boundedness of $S$. To obtain a bounding box, we use that an upper bound on the length of a vector of an ellipsoid is related to the smallest eigenvalue of the matrix defining it:
\[\max\{ \|x\| \st x^\top Q x \leq 1/2 \} = \frac{1}{\sqrt{\lambda_{\min} (2Q)}} \leq \sqrt{\frac{1000}{16}} < 8\]
In particular, this shows that $[-8,8]^3$ is a valid bounding box for $S$.

\end{proof}

\begin{claim} \label{claim:threepoints}
\begin{align*}
%
%
x^1 = \left(\frac{1207}{1000}, -\frac{117}{4000}, -\frac{117}{4000} \right) \in& \conv(S) \\
x^2 = (10, -5, -5)\not\in& \conv(S) 
\end{align*}
\end{claim}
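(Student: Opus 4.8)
The statement comprises two assertions. The second, $x^2=(10,-5,-5)\notin\conv(S)$, is immediate: by \Cref{claim:boundedS} we have $S\subseteq[-8,8]^3$, so $\conv(S)\subseteq[-8,8]^3$ since the cube is convex; but the first coordinate of $x^2$ equals $10>8$, so $x^2\notin[-8,8]^3$ and hence $x^2\notin\conv(S)$. The content is in the first assertion, $x^1\in\conv(S)$.

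For $x^1\in\conv(S)$, the plan is to exploit the symmetry of $S$ under the transposition $\sigma\colon(x_1,x_2,x_3)\mapsto(x_1,x_3,x_2)$. This symmetry holds because $A_1$ is invariant under every permutation of coordinates, $A_2=\operatorname{diag}(-2.1,1,1)$ is invariant under $\sigma$, $\sigma$ interchanges $A_3=\operatorname{diag}(1,-2.1,1)$ and $A_4=\operatorname{diag}(1,1,-2.1)$, and all linear terms vanish; hence $\sigma(S)=S$. Since $x^1$ has equal second and third coordinates, it is the midpoint of $p$ and $\sigma(p)$ for \emph{any} $p$ with $p_1=\tfrac{1207}{1000}$ and $p_2+p_3=-\tfrac{117}{2000}$. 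Thus it suffices to exhibit one such $p$ lying in $S$: then $\sigma(p)\in S$ as well, and convexity of $\conv(S)$ gives $x^1=\tfrac12\bigl(p+\sigma(p)\bigr)\in\conv(S)$.

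Concretely, I would take $p=\bigl(\tfrac{1207}{1000},\tfrac{1207}{1000},-\tfrac{2531}{2000}\bigr)$, which satisfies $p_2+p_3=\tfrac{1207}{1000}-\tfrac{2531}{2000}=-\tfrac{117}{2000}$. Writing $u=\tfrac{1207}{1000}$ and $v=-\tfrac{2531}{2000}$, the four inequalities defining $S$, evaluated at $p=(u,u,v)$, collapse to $-1.1\,u^2+v^2<0$ (arising from both the $A_2$ and the $A_3$ constraints), $2u^2-2.1\,v^2<0$ (from $A_4$), and $4.2\,u^2+4.4\,uv+v^2<1$ (from $A_1$); each is checked directly by exact rational arithmetic, the $A_1$-inequality being the binding one.

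The one real obstacle is numerical tightness: in the family $p=(u,t,-\tfrac{117}{2000}-t)$ with $u=\tfrac{1207}{1000}$, the $A_2$- and $A_3$-constraints confine $t$ to an interval of length below $10^{-3}$, and the $A_1$-inequality at the chosen point holds with only a small positive slack (of order $10^{-3}$). Hence $p$ must be pinned down carefully and all four inequalities verified with exact fractions rather than floating point. Once that is done, the symmetry-plus-midpoint argument above closes the proof.
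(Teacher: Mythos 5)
Your proposal is correct and essentially identical to the paper's proof: both parts use the same argument ($x^2$ lies outside the bounding box of \Cref{claim:boundedS}, and $x^1$ is the midpoint of the two points $\bigl(\tfrac{1207}{1000},\tfrac{1207}{1000},-\tfrac{2531}{2000}\bigr)$ and $\bigl(\tfrac{1207}{1000},-\tfrac{2531}{2000},\tfrac{1207}{1000}\bigr)$, each verified to lie in $S$ by exact evaluation). The only cosmetic difference is that you derive the second witness point from the coordinate-swap symmetry of $S$ rather than writing it out explicitly.
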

\begin{proof}
The fact that $x^2 \not\in \conv(S)$ follows directly from \Cref{claim:boundedS}, as it lies outside the bounding box. Consider
\begin{align*}
\tilde{x}^1 &= \left(\frac{1207}{1000}, \frac{1207}{1000}, -\frac{2531}{2000} \right) \\
\tilde{x}^2 &= \left(\frac{1207}{1000}, -\frac{2531}{2000} , \frac{1207}{1000}\right)
\end{align*}
One can easily verify that $\tilde{x}^1\in S$ $i=1,2$ simply evaluating the corresponding quadratic inequalities.
The result follows from noting that
\begin{align*}
x^1 =& \frac{1}{2}(\tilde{x}^1 + \tilde{x}^2) 
\end{align*}

\end{proof}

\begin{claim}
There is no $\lambda \geq 0$ such that
\begin{equation}
x^1 \in S_\lambda \textup{ and } x^2 \not\in S_\lambda. \label{eq:conditionforlambda}
\end{equation}
\end{claim}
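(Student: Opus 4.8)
The plan is to show that no single aggregation weight $\lambda \geq 0$ can separate $x^2=(10,-5,-5)$ from $\conv(S)$ while keeping $x^1$ feasible, thereby exhibiting a point $x^2\notin\conv(S)$ that belongs to $\bigcap\{S_\lambda : \lambda\in\mathbb{R}^3_+,\ S_\lambda\supseteq\conv(S)\}$. By \Cref{claim:threepoints}, $x^1\in\conv(S)$, so any $\lambda$ with $S_\lambda\supseteq\conv(S)$ automatically satisfies $x^1\in S_\lambda$; hence it suffices to prove that $x^1\in S_\lambda$ forces $x^2\in S_\lambda$, i.e. that \eqref{eq:conditionforlambda} has no solution.

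First I would write out explicitly the two scalar quadratic expressions that govern membership of $x^1$ and $x^2$ in $S_\lambda$. Since all $b_i=0$ here, for a given $\lambda=(\lambda_1,\lambda_2,\lambda_3,\lambda_4)\geq 0$ we have $x\in S_\lambda$ iff $x^\top\big(\sum_i\lambda_i A_i\big)x + \sum_i\lambda_i c_i < 0$; only $c_1=-1$ is nonzero, so the constant term is $-\lambda_1$. Evaluating the homogeneous part at $x^1$ and at $x^2$ gives two linear functionals of $\lambda$, call them $f(\lambda)=(x^1)^\top(\sum_i\lambda_i A_i)x^1$ and $g(\lambda)=(x^2)^\top(\sum_i\lambda_i A_i)x^2$; condition \eqref{eq:conditionforlambda} reads $f(\lambda)-\lambda_1<0$ and $g(\lambda)-\lambda_1\geq 0$. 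The key structural observation to exploit is that $x^2$ is essentially a positive rescaling of the ``profile'' of $x^1$ (both have the shape $(a,-b,-b)$ with a positive, small negatives), so the quadratic forms evaluated at $x^1$ and $x^2$ are closely related; more precisely, I would compute each $(x^j)^\top A_i x^j$ and observe that $g(\lambda)$ dominates $f(\lambda)$ coordinatewise in a way that makes $f(\lambda)-\lambda_1<0$ incompatible with $g(\lambda)-\lambda_1\geq 0$. Concretely, one checks that $(x^2)^\top A_i x^2 \geq (x^1)^\top A_i x^1$ for $i=2,3,4$ (where the constant contribution is zero) while for $i=1$ the difference $(x^2)^\top A_1 x^2 - (x^1)^\top A_1 x^1$ is controlled relative to the shared $-\lambda_1$ term; summing against $\lambda\geq 0$ then yields $g(\lambda)-\lambda_1 \leq f(\lambda)-\lambda_1$ is violated — i.e. actually $g(\lambda)-\lambda_1 \geq$ (something forcing nonnegativity whenever $f(\lambda)-\lambda_1<0$).

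The cleanest route is probably to identify a fixed vector relation: find nonnegative constants so that the vector of values $\big((x^2)^\top A_i x^2\big)_i$ minus $\big((x^1)^\top A_i x^1\big)_i$, together with the constant-term vector, lies in the appropriate dual cone, so that $f(\lambda)-\lambda_1<0$ algebraically implies $g(\lambda)-\lambda_1\geq 0$ for every $\lambda\geq 0$. Equivalently, exhibit scalars showing that the linear functional $\lambda\mapsto g(\lambda)-\lambda_1$ is a nonnegative combination of $\lambda\mapsto -(f(\lambda)-\lambda_1)$ plus a nonnegative-on-$\mathbb{R}^4_+$ functional; a Farkas/LP-duality argument over the four-dimensional cone $\mathbb{R}^4_+$ then closes it. I expect the main obstacle to be the bookkeeping: getting the exact rational values $(x^j)^\top A_i x^j$ right (the fractions in $x^1$ are deliberately ugly) and then verifying the coordinatewise domination / dual-cone membership with honest arithmetic rather than numerics. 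Once that linear-algebraic comparison is pinned down, the conclusion that \eqref{eq:conditionforlambda} is infeasible — and hence $x^2\in\bigcap\{S_\lambda: S_\lambda\supseteq\conv(S)\}\setminus\conv(S)$, proving $\conv(S)\subsetneq\bigcap_\lambda S_\lambda$ — is immediate.
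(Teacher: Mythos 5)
Your high-level reduction is the same as the paper's: membership of $x^1$ and $x^2$ in $S_\lambda$ is governed by two linear functionals of $\lambda$ built from the eight numbers $q_i(x^j) \coloneqq (x^j)^\top A_i x^j + c_i$, so the claim becomes the infeasibility of a small linear system over $\mathbb{R}^4_+$, to be settled by a Farkas-type certificate. The paper normalizes $\sum_i \lambda_i = 1$ and exhibits an explicit aggregation of the resulting rows proving infeasibility; your plan of a direct componentwise/dual-cone certificate is an equally legitimate route to the same end.

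However, the concrete certificate you describe is wrong in two ways. First, the direction is reversed: although your opening paragraph correctly states that it suffices to show $x^1\in S_\lambda \Rightarrow x^2\in S_\lambda$, the execution aims to show that $f(\lambda)-\lambda_1<0$ implies $g(\lambda)-\lambda_1\geq 0$ (equivalently, that $g(\lambda)-\lambda_1$ is a nonnegative combination of $-(f(\lambda)-\lambda_1)$ and a functional nonnegative on $\mathbb{R}^4_+$). That would establish $x^1\in S_\lambda \Rightarrow x^2\notin S_\lambda$, i.e., that \emph{every} aggregation containing $x^1$ cuts off $x^2$ --- the negation of the claim, and if true it would destroy the counterexample rather than prove it. What you need is $q_i(x^2)\leq C\, q_i(x^1)$ for all $i$ and some $C>0$, since then $\sum_i\lambda_i q_i(x^2)\leq C\sum_i\lambda_i q_i(x^1)<0$ for every $\lambda\geq 0$ with $x^1\in S_\lambda$. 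Second, the componentwise inequality you assert, $(x^2)^\top A_i x^2\geq (x^1)^\top A_i x^1$ for $i=2,3,4$, is numerically false: for $i=2$ it reads $-160\geq -3.058$. The relevant data are $q(x^1)\approx(0.3051,\,-3.0577,\,1.4559,\,1.4559)$ and $q(x^2)=(-16,\,-160,\,72.5,\,72.5)$, and the correct certificate does exist: $C=50$ gives $50\,q(x^1)\approx(15.26,\,-152.88,\,72.80,\,72.80)\geq q(x^2)$ componentwise, with a margin of only about $0.3$ in the third and fourth coordinates, so exact arithmetic is needed there. With the direction fixed and this value of $C$, your argument closes; as written, it would prove the opposite statement.
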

\begin{proof}
Let us call
\[q_i(x) \coloneqq [x\quad 1] \begin{bmatrix} A_i & 0 \\0 & c_i  \end{bmatrix} \begin{bmatrix} x \\ 1 \end{bmatrix} \qquad i\in [4] \]
A vector $\lambda \geq 0$ that satisfies \eqref{eq:conditionforlambda} also satisfies, without loss of generality, the following linear system
\begin{subequations}\label{eq:infeassystem}
\begin{align}
    \sum_{i=1}^4 \lambda_i q_i(x^1) & < 0 \\
    \sum_{i=1}^4 \lambda_i q_i(x^2) & > 0 \\
    \lambda_1 + \lambda_2 + \lambda_3 + \lambda_4 & = 1\\
    \lambda & \geq 0
\end{align}
\end{subequations}
However, it can be directly verified that the linear system \eqref{eq:infeassystem} is infeasible. We prefer to not provide the exact numerical values in \eqref{eq:infeassystem} as they possess too many digits. Nonetheless, the reader can verify this claim computationally using exact arithmetic based on the values provided for $x^1$ and $x^2$.

For the sake of completeness, we provide approximate coefficients for \eqref{eq:infeassystem}, along with its infeasibility proof. System \eqref{eq:infeassystem} has the form
\begin{subequations}\label{eq:infeassystem2}
\begin{align}
    0.3051\lambda_1 - 3.0576 \lambda_2 + 1.4559 \lambda_3 + 1.4559 \lambda_4  & < 0 \\
    -16\lambda_1 - 160 \lambda_2 + 72.5 \lambda_3 + 72.5 \lambda_4 & > 0 \\
    \lambda_1 + \lambda_2 + \lambda_3 + \lambda_4 & = 1\\
    \lambda & \geq 0
\end{align}
\end{subequations}
Aggregating the first three linear expression with  weights $[1.7629, -0.0342, -0.0854]$ yields the infeasibiliby proof
\[\lambda_1 < -0.08544. \]
\end{proof}

\subsection{Proof of \Cref{example:3}}\label{sec:3quadNotPDLC}
In this section, we show a case where relaxing the \PDLC{} condition results in a convex hull not obtainable via aggregations.
\threequadnotpdlc*
Let us show that $\conv(S)\neq \mathbb{R}^3$: indeed, one can easily prove that $\genericquadopen$ is bounded, as the constraints are simply
\begin{align*}
    x_1^2 & < 1 \\
    x_2^2 & < 1 \\
    x_3^2 & < x_1x_2.
\end{align*}
See \Cref{fig:counterexPDLC}.
Additionally, there is no linear combination of $\sum_{i = 1}^3 \theta_i \begin{bmatrix} A_i & b_i \\ b^{\top}_i & c_i  \end{bmatrix}$ which is positive definite: if there were, we would need $\theta_1 < 0$ or $\theta_2 < 0$, since $c_3 =0$ and $c_1 =c_2 =-1$. However, in that case the first or the second diagonal entry of the linear combination is negative.

Now we prove that \(\conv(S) \subsetneq \bigcap \{ \genericquadopen_\lambda \st \lambda\in \mathbb{R}^n_+,\, \conv(S) \supseteq \genericquadopen_\lambda \}.\)
\begin{claim}
$\left(-\frac{3}{8} - \epsilon, \frac{3}{8} +\epsilon, \frac{1}{2} \right) \not \in \conv(S)$ for any $\epsilon>0$. 
\end{claim}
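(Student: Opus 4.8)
The plan is to show that the point $p_\epsilon = \left(-\frac{3}{8}-\epsilon, \frac{3}{8}+\epsilon, \frac{1}{2}\right)$ violates a \emph{linear} inequality that is valid for $\conv(S)$, and hence violates $\conv(S)$ itself; then, separately, exhibit an aggregation $S_\lambda$ that contains all of $\conv(S)$ yet still contains $p_\epsilon$, which will give the strict inclusion $\conv(S) \subsetneq \bigcap\{S_\lambda : \lambda \geq 0,\ S_\lambda \supseteq \conv(S)\}$ claimed in \Cref{example:3}. For the current claim, only the first half is needed. The natural linear inequality to try is $x_1 + x_2 < 0$, or more precisely $x_1 \leq -x_2$: I would argue this is valid for $S$ (and therefore for $\conv(S)$) using the constraint description $x_1^2 < 1$, $x_2^2 < 1$, $x_3^2 < x_1 x_2$. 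The third constraint forces $x_1 x_2 > x_3^2 \geq 0$, so $x_1$ and $x_2$ have the same sign; if both are positive we would need to rule that out, or, more likely, the relevant valid inequality is of the form $\alpha^\top x < \beta$ with $\alpha = (1,-1,0)$ giving $x_1 - x_2 < 0$ on the branch where $x_1, x_2 < 0$ — but since $\conv(S)$ mixes both sign-branches, the genuinely valid linear inequalities are the ones separating the \emph{convex hull}.

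Concretely, I would first determine $\conv(S)$ well enough near $p_\epsilon$. Since $S$ decomposes (roughly) into a ``positive'' piece $S^+ \subseteq \{x_1 > 0, x_2 > 0\}$ and a ``negative'' piece $S^- \subseteq \{x_1 < 0, x_2 < 0\}$, each symmetric to the other under $(x_1,x_2,x_3)\mapsto(-x_1,-x_2,x_3)$, the convex hull is $\conv(S^+ \cup S^-)$. The point $p_\epsilon$ has $x_1 < 0 < x_2$, so it lies in the ``mixing'' region, and membership in $\conv(S)$ requires writing it as a convex combination of points from $S^+$ and $S^-$. I would parametrize: suppose $p_\epsilon = t u + (1-t) v$ with $u \in \overline{S^+}$, $v \in \overline{S^-}$, $t \in [0,1]$. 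Looking at the third coordinate, $\frac12 = t u_3 + (1-t) v_3$ with $|u_3|, |v_3| \le 1$ small (in fact $u_3^2 \le u_1 u_2 \le 1$ and similarly for $v$); and from the first two coordinates, $-\frac38-\epsilon = t u_1 + (1-t) v_1$ with $u_1 \in (0,1)$, $v_1 \in (-1,0)$, and $\frac38+\epsilon = t u_2 + (1-t) v_2$ with $u_2 \in (0,1)$, $v_2 \in(-1,0)$. Adding the $x_1$ and $x_2$ equations kills the first-order bound trivially ($0 = t(u_1+u_2)+(1-t)(v_1+v_2)$, always satisfiable), so the obstruction must come from coupling the third coordinate with the first two via the quadratic constraint $x_3^2 \le x_1 x_2$. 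The key inequality I expect to exploit is: on $\overline{S}$, we have $x_1 x_2 \ge x_3^2$ and $x_1^2 \le 1$, $x_2^2 \le 1$, and I want to deduce that any convex combination lands far from $p_\epsilon$.

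The cleanest route is probably to find the explicit valid linear inequality. I would compute: for $x \in S$ with $x_1, x_2 > 0$, the AM-GM-type bound $x_3^2 < x_1 x_2 \le \frac{x_1^2 + x_2^2}{2} < 1$, so $|x_3| < 1$ — not sharp enough. Instead, I'd look for an inequality like $a x_1 + b x_2 + c |x_3| \le d$ valid on $S^+$ and its mirror on $S^-$; the tightest supporting hyperplane of $\conv(S)$ through (a neighborhood of) $p_\epsilon$ will be a convex combination of a supporting hyperplane of $\overline{S^+}$ and one of $\overline{S^-}$ that agree on their linear part. I anticipate the answer is that $\conv(S)$ is contained in $\{x : |x_1 + x_2| + \gamma\, x_3^2 \le \text{something}\}$ or, after linearizing, in a half-space $\{\alpha_1 x_1 + \alpha_2 x_2 + \alpha_3 x_3 < \beta\}$ that $p_\epsilon$ violates for every $\epsilon > 0$ (and marginally fails at $\epsilon = 0$, which is why the statement needs $\epsilon > 0$). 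I would verify the candidate inequality is valid by checking it on the extreme points / boundary of $S^+$ (a constrained optimization: maximize $\alpha^\top x$ over $\overline{S^+}$, which is a small quadratically-constrained problem solvable by Lagrange/KKT in closed form given the simple constraints), then invoke symmetry for $S^-$, then convexity. \textbf{Main obstacle.} The crux is pinning down the right separating linear inequality and proving its validity rigorously over $S^+$: because the defining set $\overline{S^+} = \{x_1^2 \le 1, x_2^2 \le 1, x_3^2 \le x_1 x_2,\ x_1,x_2 \ge 0\}$ is itself nonconvex-looking (though $x_3^2 \le x_1 x_2$ with $x_1, x_2 \ge 0$ is actually a convex constraint — it is a rotated second-order-cone-like region), the optimization to certify validity requires some care with which constraints are active at the optimum; getting the constant $\beta$ exactly right so that the strict violation holds precisely for $\epsilon > 0$ and not $\epsilon = 0$ is the delicate part. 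Once that linear inequality is in hand, the conclusion $p_\epsilon \notin \conv(S)$ is immediate.
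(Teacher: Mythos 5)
Your overall strategy coincides with the paper's: exhibit a linear inequality valid for $S$ (hence for $\conv(S)$) that the point $p_\epsilon=\left(-\tfrac38-\epsilon,\tfrac38+\epsilon,\tfrac12\right)$ violates. But the proposal stops exactly where the proof has to start: you never produce the inequality, and you explicitly flag ``pinning down the right separating linear inequality and proving its validity'' as the unresolved crux. That is not a minor detail to be filled in mechanically --- it is the entire content of the claim. As written, the argument contains no certificate that $p_\epsilon\notin\conv(S)$; the convex-combination parametrization you set up correctly identifies that the obstruction must couple $x_3$ with $x_1,x_2$, but it does not extract one. (Also, your first candidate $x_1\le -x_2$ is invalid --- $S$ contains points with $x_1,x_2>0$ --- though you do back away from it.)

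For comparison, the paper uses the half-space $-x_1+x_2+x_3\le \tfrac54$, on which $p_\epsilon$ evaluates to $\tfrac54+2\epsilon$. Validity over $S$ is certified by slicing: for fixed $x_3=z$ with $|z|\le 1$, the set $\conv\left(S\cap\{x_3=z\}\right)$ is a polytope with explicitly listed extreme points $(\pm 1,\pm 1,z)$, $(\pm z^2,\pm 1,z)$, $(\pm 1,\pm z^2,z)$ (signs paired), so the maximum of $-x_1+x_2+x_3$ over the slice is $\max\{z,\,-1+z^2+z,\,1+z-z^2\}$, whose maximum over $|z|\le 1$ is $\tfrac54$ (attained at $z=\tfrac12$, which is why the third coordinate of $p_\epsilon$ is $\tfrac12$ and why $\epsilon=0$ sits exactly on the supporting hyperplane). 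This slicing device sidesteps the Lagrange/KKT optimization over $\overline{S^+}$ that you anticipate as delicate, reducing validity to a one-variable maximization. To complete your proof you would need to supply this inequality (or an equivalent one) together with such a validity certificate.
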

\begin{proof}
Consider the halfspace $H: \{x \in \mathbb{R}^3: -x_1 + x_2 +x_3 \leq 1.25 \}$. Notice that $- \left(-\frac{3}{8} - \epsilon \right) + \left(\frac{3}{8} +\epsilon \right) + \frac{1}{2} = 1.25 +2\epsilon > 1.25$. We proceed to show that $\genericquadopen \subseteq H$, which completes the proof of the claim.

Let $\hat{S}(z):= \textup{conv}(S \cap \{x\in \mathbb{R}^3: x_3 = z\})$. It is straightforward to verify that $\hat{S}(z)$ is a polytope with extreme points: $(1, 1, z)$, $(z^2, 1, z)$, $(1, z^2, z)$, $(-1, -z^2, z)$, $(-z^2, -1, z)$ and $(-1, -1, z)$. Thus, 
$$\textup{max}\{-x_1 + x_2+ x_3: x\in \hat{S}(z)\} = \textup{max}\{z, -1 + z^2 + z, 1 +z - z^2\}.$$ 
Noting that $\textup{max}\{x^2_3: x\in S\} =1$, we have that 
\begin{eqnarray*}
\textup{sup}\{-x_1 + x_2+ x_3: x\in S\} &\leq& \textup{max}\{-x_1 + x_2+ x_3: x\in \cup_{z: |z| \leq 1}\hat{S}(z)\}\\
 &=& \textup{max}_{z: |z| \leq 1}\{z, -1 + z^2 + z, 1 +z - z^2\} = 1.25.
\end{eqnarray*}
Therefore, $\genericquadopen\subseteq H$.
\end{proof}
\begin{claim}
$(0, 0, 1- \epsilon) \in \textup{conv}(\genericquadopen)$ for $\epsilon\in (0,1)$.
\end{claim}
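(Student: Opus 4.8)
The plan is to exhibit $(0,0,1-\epsilon)$ as an explicit convex combination of points that lie in $S$ (recall $S$ is the \emph{open} set defined by $x_1^2<1$, $x_2^2<1$, $x_3^2<x_1x_2$). Write $z=1-\epsilon\in(0,1)$, so $z^2<1$. The natural candidates come from the slice description used in the previous claim: $\hat S(z)=\textup{conv}(S\cap\{x_3=z\})$ is a polytope whose vertices include $(z^2,1,z)$ and $(1,z^2,z)$, as well as $(-z^2,-1,z)$ and $(-1,-z^2,z)$. Averaging the first two gives $\bigl(\tfrac{1+z^2}{2},\tfrac{1+z^2}{2},z\bigr)$, and averaging the last two gives $\bigl(-\tfrac{1+z^2}{2},-\tfrac{1+z^2}{2},z\bigr)$; the midpoint of these is exactly $(0,0,z)$. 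So $(0,0,z)$ is a convex combination of four vertices of $\hat S(z)$, hence lies in $\textup{conv}(S\cap\{x_3=z\})\subseteq\textup{conv}(S)$.

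The one gap to close is that these four points are only \emph{limit} points of $S$ (the defining inequalities for $S$ are strict, and e.g. $(z^2,1,z)$ has $x_2^2=1$ on the boundary), so strictly speaking they are in $\bar S$, not in $S$, and $\textup{conv}(S)$ need not contain all of $\textup{conv}(\bar S)$. I would fix this by perturbing: for small $\delta>0$ replace $1$ by $1-\delta$ and $-1$ by $-1+\delta$ in the second coordinates, i.e. take
\[
p_1=\bigl(z^2,\,1-\delta,\,z\bigr),\quad p_2=\bigl(1-\delta,\,z^2,\,z\bigr),\quad p_3=-p_2,\quad p_4=-p_1,
\]
and then also shrink the first/third coordinates slightly or decrease $z$ by a further $\delta'$ so that the constraint $x_3^2<x_1x_2$ holds strictly at each $p_j$. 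For $p_1$ one needs $z^2<z^2(1-\delta)$ — which fails — so in fact it is cleaner to perturb the first coordinate too: take $p_1=\bigl(z^2+\delta,\,1-\delta,\,z-\delta''\bigr)$ with $\delta''$ chosen so that $(z-\delta'')^2<(z^2+\delta)(1-\delta)$ and $z^2+\delta<1$, both achievable for $\delta,\delta''$ small since $z^2<1$ strictly. Symmetrically for $p_2$, and take $p_3=-p_2$, $p_4=-p_1$ (these satisfy the same inequalities since all constraints are even or the product $x_1x_2$ is sign-symmetric). Then $\tfrac14(p_1+p_2+p_3+p_4)=(0,0,0)$ in the first two coordinates automatically and $z-\delta''$ in the third; finally adjust the choice of $\delta''$ (or average over a pair with third coordinates $z-\delta''$ and $z+\delta''$, both $<1$ in absolute value) so the third coordinate comes out exactly $1-\epsilon$.

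The main obstacle is purely bookkeeping: verifying that the small perturbations can be chosen simultaneously so that all three strict inequalities hold at each of the four points, and that their average has third coordinate exactly $1-\epsilon$. Since $z^2<1$ strictly and $z<1$ strictly, there is genuine slack, so an elementary continuity/open-condition argument suffices — I would phrase it as: the set of $(a,b,c)$ with $a^2<1$, $b^2<1$, $c^2<ab$ is open and contains $(\sqrt{z},\sqrt z^{-1}\cdot z^2,\dots)$... more simply, it contains $(t,1/t\cdot\text{(something)})$; concretely it is enough to note $(\,\sqrt{z}\cdot\sqrt{1+\eta},\ \sqrt{z}\cdot\sqrt{1+\eta},\ z\,)$ satisfies $x_3^2 = z^2 < z(1+\eta)=x_1x_2$ and $x_1^2=x_2^2=z(1+\eta)<1$ for $\eta$ small since $z<1$; its reflection works too, and these two average to $(\sqrt z\sqrt{1+\eta},\sqrt z\sqrt{1+\eta},z)$ paired with its negative gives $(0,0,z)$ with $z=1-\epsilon$ directly. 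I would present this last, cleanest version: the two points $q^\pm=\pm(\sqrt{z(1+\eta)},\sqrt{z(1+\eta)},z)$ lie in $S$ for $\eta>0$ small enough, and $\tfrac12(q^++q^-)=(0,0,1-\epsilon)$, completing the proof.
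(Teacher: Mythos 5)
Your final, ``cleanest'' construction is essentially the paper's own proof: the paper exhibits the two points $(1-\epsilon/2,\,1-\epsilon/2,\,1-\epsilon)$ and $(-1+\epsilon/2,\,-1+\epsilon/2,\,1-\epsilon)$, checks both lie in $S$, and averages them to get $(0,0,1-\epsilon)$. Your points with first two coordinates $\pm\sqrt{z(1+\eta)}$ and third coordinate $z=1-\epsilon$ play the same role, and your verification that such a point satisfies all three strict inequalities ($z(1+\eta)<1$ for $\eta$ small, and $z^2<z(1+\eta)$ since $z<1$) is correct.

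However, the last line is wrong as written: you set $q^{\pm}=\pm\bigl(\sqrt{z(1+\eta)},\sqrt{z(1+\eta)},z\bigr)$, which negates \emph{all three} coordinates, so $\tfrac12(q^++q^-)=(0,0,0)$, not $(0,0,1-\epsilon)$. The same slip occurs in the middle of the proposal, where you take $p_3=-p_2$, $p_4=-p_1$ (full negations) and then assert that the average has third coordinate $z-\delta''$; with full negations the average is the origin. The reflection you actually need negates only $x_1$ and $x_2$ and keeps $x_3=z$ fixed; the reflected point is still in $S$ because $x_1^2$, $x_2^2$ and $x_1x_2$ are invariant under $(x_1,x_2)\mapsto(-x_1,-x_2)$. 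With that one-line correction the argument is complete and matches the paper's. A minor stylistic point: the first two paragraphs (the $\hat S(z)$ vertex list and the $\delta,\delta',\delta''$ perturbation bookkeeping) are superseded by the final construction and could be dropped.
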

\begin{proof}
Indeed, it can be directly verified that $(1 - \epsilon/2, 1- \epsilon/2, 1 - \epsilon), (-1 + \epsilon/2, -1 + \epsilon/2, 1- \epsilon) \in S$. The midpoint of these two points is $(0, 0, 1- \epsilon)$, which shows it lies in $\textup{conv}(S)$.
\end{proof}

Based on the two claims above, it is sufficient to prove that for a particular $\epsilon\in (0,1)$ there is no $\lambda\in \mathbb{R}^3_+ \setminus \{0\}$, such that $\left(-\frac{3}{8} - \epsilon, \frac{3}{8} +\epsilon, \frac{1}{2} \right) \not \in \genericquadopen_{\lambda}$ and $(0, 0, 1- \epsilon) \in \genericquadopen_{\lambda}$. 

Let $\epsilon = 1/8$ and, by contradiction, assume there is $\lambda\in \mathbb{R}^3_+ \setminus \{0\}$ such that $x = \left(-\frac{3}{8} - \epsilon, \frac{3}{8} +\epsilon, \frac{1}{2} \right) \not \in \genericquadopen_{\lambda}$ and $y = (0, 0, 1- \epsilon) \in \genericquadopen_{\lambda}$. A simple calculation yields
\[x_1^2 - 1 = -\frac{3}{4}, \quad x_2^2 - 1 = -\frac{3}{4},\quad x_3^2 - x_1x_2 = \frac{1}{2}\]
\[y_1^2 - 1 = -1, \quad y_2^2 - 1 = -1,\quad y_3^2 - y_1y_2 = \frac{49}{64}\]
This implies that $\lambda$ satisfies
\begin{align*}
    -\frac{3}{4} \lambda_1 - \frac{3}{4} \lambda_2 + \frac{1}{2} \lambda_3 & \geq 0 \\
    - \lambda_1 -  \lambda_2 + \frac{49}{64} \lambda_3 & < 0
\end{align*}
Multiplying the first inequality by $-4/3$ and adding it to the second inequality yields
\[\left(-\frac{2}{3} + \frac{49}{64}\right) \lambda_3 < 0\]
This implies $\lambda_3 < 0$, which is a contradiction. We conclude that there is no such $\lambda$.



\subsection{On the SDP tightness of Example \ref{example:1-2}} \label{sec:SDPtightness}
In this example, we are considering a set $T$ defined by the following inequalities
\begin{subequations}\label{eq:Texample}
\begin{align}
    x_1^2 + x_2^2 &\leq 2\\
    -x_1^2 - x_2^2 & \leq -1 \\
    -x_1^2 + x_2^2 + x_3^2 + 6x_1 &\leq 0. \label{eq:Texample-third}
\end{align}
\end{subequations}
We would like to argue that $\max\{x_1\st x\in T\} < 0$, while its SDP bound is $>0$.

If we aggregate inequalities \eqref{eq:Texample} with multipliers $(0,1/2,1/2)$ we obtain the implied inequality
\[-x_1^2 + \frac{1}{2}x_3^2 + 3x_1 + \frac{1}{2} \leq 0.\]
Lower bounding $x_3^2 \geq 0$, and factoring the resulting concave quadratic yields
\[\left(x_1 - \frac{1}{2}(3-\sqrt{11}) \right) \left( - x_1 + \frac{1}{2}(3+\sqrt{11}) \right) \leq 0,\]
from where we conclude 
\begin{equation}
    \label{eq:twocases}
    x_1 \leq  \frac{1}{2}(3-\sqrt{11})\qquad \text{or} \qquad x_1 \geq \frac{1}{2}(3+\sqrt{11}).
\end{equation}
Additionally, aggregating inequalities \eqref{eq:Texample} with multipliers $(1,0,1)$, we obtain
\[2x_2^2 + x_3^2 + 6x_1 \leq 2\]
from where we conclude $x_1 \leq 1/3$, therefore the leftmost inequality in \eqref{eq:twocases} is valid for $T$, which implies $x_1 \leq 0$. Actually, one can prove that $\frac{1}{2}(3-\sqrt{11}) < 0$ is the optimal value of $x_1$, but a bound suffices for our argument.

On the other hand, the SDP relaxation of the optimization problem reads
\begin{align*}
    \max \quad & x_1 \\
    \text{s.t.} \quad &X_{11} + X_{22} \leq 2 \\
    &-X_{11} - X_{22} \leq -1 \\
    & -X_{11} + X_{22} + X_{33} + 6x_1 \leq 0 \\
    & \begin{bmatrix}
    1 & x^\top \\ 
    x & X 
    \end{bmatrix} \succeq 0.
\end{align*}
And one can easily verify that
\[
\begin{bmatrix}
    1 & x^\top \\ 
    x & X 
    \end{bmatrix} = \begin{bmatrix}
1 & \frac{1}{3} & 0 & 0 \\
\frac{1}{3} & 2& 0 & 0 \\
0 & 0 & 0 & 0 \\
0 & 0 & 0 & 0 
\end{bmatrix} \succeq 0
\]
is a feasible solution with objective value $1/3 > 0$, thus showing that the SDP relaxation is not tight.
%

\subsection{Proof of \Cref{example:4}} \label{sec:infiniteaggproof}
In this subsection, we show that in \Cref{example:4} the convex hull can be obtained via aggregations, but only using infinitely many of them.
\infiniteaggregations*
It is easy to see that $$\textup{conv}(\genericquadopen) = \{ x \in \mathbb{R}^2:  x_1^2 \leq 1,\  x_2^2 \leq 1, \ x_1 + x_2 \leq 1 \}.$$ 
See \Cref{fig:infiniteaggregation} for an illustration of the set $S$ and its convex hull.
Additionally, \PDLC{} condition does not hold. Indeed, \PDLC{} condition holds if and only if 0 is the optimal value of the following SDP:
\begin{eqnarray*}
&\textup{min }& \theta_4 \\
&\textup{s.t.} & \theta_1\left[\begin{array}{ccc} 1& 0 & 0\\ 0 & 0& 0 \\ 0 & 0 & -1\end{array}\right]+ \theta_2\left[\begin{array}{ccc} 0& 0 & 0\\ 0 & 1& 0 \\ 0 & 0 & -1\end{array}\right] + \theta_3\left[\begin{array}{ccc} -1& 0 & 1\\ 0 & -1 & 1 \\ 1 & 1 & -1\end{array}\right]+ \theta_4 \cdot I\succeq I \\
&& \theta_4 \geq 0,
\end{eqnarray*}
where $I$ is the identity matrix. 
The dual of this SDP is:
\begin{eqnarray*}
&\textup{max}\,& \langle I, W\rangle \\
&\textup{s.t.}\, & \left\langle W, \left[\begin{array}{ccc} 1& 0 & 0\\ 0 & 0& 0 \\ 0 & 0 & -1\end{array}\right] \right\rangle  = 0\\ 
&& \left \langle W, \left[\begin{array}{ccc} 0& 0 & 0\\ 0 & 1& 0 \\ 0 & 0 & -1\end{array}\right]\right \rangle = 0 \\
&& \left \langle W, \left[\begin{array}{ccc} -1& 0 & 1\\ 0 & -1 & 1 \\ 1 & 1 & -1\end{array}\right]\right \rangle = 0 \\
&& \left \langle W, I \right \rangle \leq 1\\
&& W \succeq 0. 
\end{eqnarray*}

Note that $$W = \left[\begin{array}{ccc} 1/3 & 1/4 & 1/4 \\ 1/4 & 1/3 & 1/4 \\ 1/4& 1/4 & 1/3 \end{array} \right]$$ is a feasible solution to the dual and its objective function value is $1$. By weak duality we conclude that the \PDLC{} condition does not hold.

Recall that $\Omega^+ := \{ \lambda \in \mathbb{R}^3_{+}: \genericquadopen_{\lambda} \supseteq \textup{conv}(\genericquadopen)\}$. We now show that 
\[\conv(\genericquadopen) = \bigcap_{\lambda \in \Omega^+} \genericquadopen_{\lambda}.\] 
By definition, we have, $\conv(\genericquadopen) \subseteq \bigcap_{\lambda \in \Omega^+} \genericquadopen_{\lambda}$. We need to verify $\conv(\genericquadopen) \supseteq \bigcap_{\lambda \in \Omega^+} \genericquadopen_{\lambda}$. Since the constraints $x_1^2 \leq 1 $ and $x^2_2 \leq 1$ contain $\conv(\genericquadopen)$, it is sufficient to show that for each point in the set $\{ (x_1, x_2): x_1 + x_2 > 1, x_1 <1, x_2 <1\}$ there is a $\lambda \in \Omega^+$ that \emph{does not} contain it. 

Consider $\lambda^a  = \left(a^2,a^2-2 a+1,a^2-a+1\right)$ for $a \in (0,1)$. It is not difficult to verify that $\lambda^a \geq 0$ and
$$\genericquadopen_{\lambda^a}:=\left\{ x: (a-1) x_1^2 - a x_2^2 + 2 \left(a^2-a+1\right) (x_1+x_2)-3 a^2 + 3 a-2 \leq 0 \right\}$$ 

Let us show that $\genericquadopen_{\lambda^a} \supseteq \conv(\genericquadopen)$. Let
\[ g_a(x) \coloneqq (a-1) x_1^2 - a x_2^2 + 2 \left(a^2-a+1\right) (x_1+x_2)-3 a^2 + 3 a-2 \]
Penalizing the constraint $x_1 + x_2 \leq 1$ with the multiplier $2+4(a-1)a \geq 0$ we obtain:
\begin{align*}
    \max_{x\in \conv(S)}\, g_a(x) & \leq \max_{x}\, g_a(x) + (2+4(a-1)a)(1-x_1-x_2) \\
    &= \max_x\, (a-1) \left(-2 a x_1 + a + x_1^2\right)-a x_2^2-2 (a-1) a x_2
\end{align*}
The last function is \emph{strictly concave} for $a\in (0,1)$, and its unique optimal solution is $x=(a,1-a)$, with optimal value $0$. This shows that $\genericquadopen_{\lambda^a} \supseteq \conv(\genericquadopen)$.

In addition, it is not difficult to verify that all points of the form $(a + \epsilon, 1 - a + \epsilon)$ for $\epsilon >0$ are no contained in $\genericquadopen_{\lambda^a}$. Therefore, 
$$\conv(\genericquadopen) = \{x : x_1^2\leq1, x_2^2 \leq 1\}  \cap \bigcap_{a \in (0,1)} S_{\lambda^a}.$$

Finally, we will verify that we need infinitely many aggregations. Consider any $\lambda \in \mathbb{R}^3_{+}\setminus \{0\}$ such that the inequality defining $\genericquadopen_{\lambda}$ is active at $(a, 1 - a)$ and valid for $\conv(\genericquadopen)$. Then $\lambda$ must satisfy:
\begin{eqnarray*}
(a^2 - 1) \lambda_1 + (a^2 - 2a) \lambda_2 + (-2a^2 + 2a) \lambda_3  &=& 0 
\end{eqnarray*}
Since  $a^2 -1 < 0$, $a^2 - 2a <0$, and $-2a^2 + 2a > 0$ it must be that $\lambda_3 >0$. Additionally, since $|a^2 - 1| > 2a - 2a^2$ and $a^2  - 2a < 0$, we must have that $\lambda_1 < \lambda_3$. Similarly, since $|a^2 - 2a| > 2a - 2a^2$, we must have that $\lambda_2 < \lambda_3$. 

Note now that
$$\genericquadopen_{\lambda}:= \{x: (\lambda_1 - \lambda_3) x_1^2 + (\lambda_2 - \lambda_3)x_2^2 + 2\lambda_3(x + y) - \lambda_1 -\lambda_2 -\lambda_3 \leq 0\}.$$
Since we just showed $\lambda_i < \lambda_3$ for $i=1,2$, the last set is the complement of an ellipse. From here we conclude that if $(b,1-b)$ for $b\neq a$ is also in the boundary of $\genericquadopen_{\lambda}$ then $\genericquadopen_{\lambda}$ does not contain $\conv(\genericquadopen)$. 

This shows that any aggregation that has $(a,1-a)$ in its boundary, does not contain any other point $(b,1-b)$.

\bibliographystyle{plain}
\bibliography{references}

\begin{thebibliography}{10}

\bibitem{anstreicher2017kronecker}
Kurt~M. Anstreicher.
\newblock Kronecker product constraints with an application to the
  two-trust-region subproblem.
\newblock {\em SIAM Journal on Optimization}, 27(1):368--378, 2017.

\bibitem{argue2020necessary}
Charles~J. Argue, Fatma K{\i}l{\i}n{\c{c}}-Karzan, and Alex~L. Wang.
\newblock Necessary and sufficient conditions for rank-one generated cones.
\newblock {\em arXiv preprint arXiv:2007.07433}, 2020.

\bibitem{barvinok2001remark}
Alexander Barvinok.
\newblock A remark on the rank of positive semidefinite matrices subject to
  affine constraints.
\newblock {\em Discrete \& Computational Geometry}, 25(1):23--31, 2001.

\bibitem{belotti2013families}
Pietro Belotti, Julio~C. G{\'o}ez, Imre P{\'o}lik, Ted~K. Ralphs, and Tam{\'a}s
  Terlaky.
\newblock On families of quadratic surfaces having fixed intersections with two
  hyperplanes.
\newblock {\em Discrete Applied Mathematics}, 161(16-17):2778--2793, 2013.

\bibitem{NemirovskiNotes}
Aharon Ben-Tal and Arkadi Nemirovski.
\newblock {\em Lectures on Modern Convex Optimization}.
\newblock Society for Industrial and Applied Mathematics, 2001.

\bibitem{bienstock2020outer}
Daniel Bienstock, Chen Chen, and Gonzalo Munoz.
\newblock Outer-product-free sets for polynomial optimization and oracle-based
  cuts.
\newblock {\em Mathematical Programming}, 183(1):105--148, 2020.

\bibitem{bodur2018aggregation}
Merve Bodur, Alberto Del~Pia, Santanu~S Dey, Marco Molinaro, and Sebastian
  Pokutta.
\newblock Aggregation-based cutting-planes for packing and covering integer
  programs.
\newblock {\em Mathematical Programming}, 171(1):331--359, 2018.

\bibitem{burer2015gentle}
Samuel Burer.
\newblock A gentle, geometric introduction to copositive optimization.
\newblock {\em Mathematical Programming}, 151(1):89--116, 2015.

\bibitem{burer2013second}
Samuel Burer and Kurt~M. Anstreicher.
\newblock Second-order-cone constraints for extended trust-region subproblems.
\newblock {\em SIAM Journal on Optimization}, 23(1):432--451, 2013.

\bibitem{Burer2016}
Samuel Burer and Fatma K{\i}l{\i}n{\c{c}}-Karzan.
\newblock How to convexify the intersection of a second order cone and a
  nonconvex quadratic.
\newblock {\em Mathematical Programming}, pages 1--37, 2016.

\bibitem{burer2015trust}
Samuel Burer and Boshi Yang.
\newblock The trust region subproblem with non-intersecting linear constraints.
\newblock {\em Mathematical Programming}, 149(1):253--264, 2015.

\bibitem{burer2019exact}
Samuel Burer and Yinyu Ye.
\newblock Exact semidefinite formulations for a class of (random and
  non-random) nonconvex quadratic programs.
\newblock {\em Mathematical Programming}, pages 1--17, 2019.

\bibitem{dey2019convexifications}
Santanu~S. Dey, Burak Kocuk, and Asteroide Santana.
\newblock Convexifications of rank-one-based substructures in {QCQPs} and
  applications to the pooling problem.
\newblock {\em Journal of Global Optimization}, pages 1--46, 2019.

\bibitem{DeySantana2018}
Santanu~S. Dey, Asteroide Santana, and Yang Wang.
\newblock New {SOCP} relaxation and branching rule for bipartite bilinear
  programs.
\newblock {\em Optimization and Engineering}, Sep 2018.

\bibitem{GDR2021}
Xiaoyi Gu, Santanu~S. Dey, and Jean-Philippe~P. Richard.
\newblock Lifting convex inequalities for bipartite bilinear programs.
\newblock {\em IPCO}, 2021.

\bibitem{Horn1985matrix}
Roger Horn and Charles Johnson.
\newblock {\em Matrix analysis}.
\newblock Cambridge University Press, 1985.

\bibitem{Modaresi2017}
Sina Modaresi and Juan~Pablo Vielma.
\newblock Convex hull of two quadratic or a conic quadratic and a quadratic
  inequality.
\newblock {\em Mathematical Programming}, 164(1):383--409, Jul 2017.

\bibitem{muller2020generalized}
Benjamin M{\"u}ller, Gonzalo Mu{\~n}oz, Maxime Gasse, Ambros Gleixner, Andrea
  Lodi, and Felipe Serrano.
\newblock On generalized surrogate duality in mixed-integer nonlinear
  programming.
\newblock In {\em International Conference on Integer Programming and
  Combinatorial Optimization}, pages 322--337. Springer, Cham, 2020.

\bibitem{munoz2020maximal}
Gonzalo Mu{\~n}oz and Felipe Serrano.
\newblock Maximal quadratic-free sets.
\newblock In {\em International Conference on Integer Programming and
  Combinatorial Optimization}, pages 307--321. Springer, 2020.

\bibitem{pataki1998rank}
G{\'a}bor Pataki.
\newblock On the rank of extreme matrices in semidefinite programs and the
  multiplicity of optimal eigenvalues.
\newblock {\em Mathematics of operations research}, 23(2):339--358, 1998.

\bibitem{polik2007survey}
Imre P{\'o}lik and Tam{\'a}s Terlaky.
\newblock A survey of the s-lemma.
\newblock {\em SIAM review}, 49(3):371--418, 2007.

\bibitem{asterquad}
Asteroide Santana and Santanu~S. Dey.
\newblock The convex hull of a quadratic constraint over a polytope.
\newblock {\em SIAM Journal on Optimization}, 30(4):2983--2997, 2020.

\bibitem{tawarmalani2010strong}
Mohit Tawarmalani, Jean-Philippe~P Richard, and Kwanghun Chung.
\newblock Strong valid inequalities for orthogonal disjunctions and bilinear
  covering sets.
\newblock {\em Mathematical Programming}, 124(1):481--512, 2010.

\bibitem{wang2020convex}
Alex~L. Wang and Fatma K{\i}l{\i}n{\c{c}}-Karzan.
\newblock On convex hulls of epigraphs of {QCQP}s.
\newblock In {\em International Conference on Integer Programming and
  Combinatorial Optimization}, pages 419--432. Springer, 2020.

\bibitem{wang2021tightness}
Alex~L. Wang and Fatma K{\i}l{\i}n{\c{c}}-Karzan.
\newblock On the tightness of {SDP} relaxations of {QCQP}s.
\newblock {\em Mathematical Programming}, pages 1--41, 2021.

\bibitem{yakubovich1977}
Vladimir~Andreevich Yakubovich.
\newblock S-procedure in nonlinear control theory.
\newblock {\em Vestnick Leningrad Univ. Math.}, 4:73--93, 1977.

\bibitem{ye2003new}
Yinyu Ye and Shuzhong Zhang.
\newblock New results on quadratic minimization.
\newblock {\em SIAM Journal on Optimization}, 14(1):245--267, 2003.

\bibitem{yildiran2009convex}
U{\u{g}}ur Yildiran.
\newblock Convex hull of two quadratic constraints is an {LMI} set.
\newblock {\em IMA Journal of Mathematical Control and Information},
  26(4):417--450, 2009.

\end{thebibliography}
\end{document}